          \newtheorem{theorem}{Theorem}[section]
      \newtheorem{definition}[theorem]{Definition}
      \newtheorem{proposition}[theorem]{Proposition}
      \newtheorem{corollary}[theorem]{Corollary}
      \newcommand{\BB}{{\mathbb B}}
      \newcommand{\CC}{{\mathbb C}}
      \newcommand{\NN}{{\mathbb N}}
      \newcommand{\ZZ}{{\mathbb Z}}
      \newcommand{\DD}{{\mathbb D}}
      \newcommand{\RR}{{\mathbb R}}
      \newcommand{\FF}{{\mathbb F}}
      \newcommand{\TT}{{\mathbb T}}
      \newcommand{\cA}{{\mathcal A}}
      \newcommand{\cC}{{\mathcal C}}
      \newcommand{\cD}{{\mathcal D}}
      \newcommand{\cE}{{\mathcal E}}
      \newcommand{\cH}{{\mathcal H}}
      \newcommand{\cK}{{\mathcal K}}
      \newcommand{\cM}{{\mathcal M}}
      \newcommand{\cP}{{\mathcal P}}
      \newcommand{\cR}{{\mathcal R}}
      \newcommand{\cS}{{\mathcal S}}
      \newcommand{\cY}{{\mathcal Y}}
      \newcommand{\cX}{{\mathcal X}}
      \newdimen\expt
      \def\boxit#1{\setbox0\hbox{$\displaystyle{#1}$}
            \hbox{\lower.4\expt
       \hbox{\lower3\expt\hbox{\lower\dp0
            \hbox{\vbox{\hrule height.4\expt
       \hbox{\vrule width.4\expt\hskip3\expt
            \vbox{\vskip3\expt\box0\vskip2\expt}%
       \hskip3\expt\vrule width.4\expt}\hrule height.4\expt}}}}}}
\begin{document}
       \pagestyle{myheadings}
      \markboth{ Gelu Popescu}{Multi-Toeplitz operators and free pluriharmonic functions}

      \title [Multi-Toeplitz operators and free pluriharmonic functions]
      { Multi-Toeplitz operators and free pluriharmonic functions
      }
        \author{Gelu Popescu}
\date{November 15, 2018}
     \thanks{Research supported in part by  NSF grant DMS 1500922}
       \subjclass[2000]{Primary:   47B35; 47A56; 47A13;    Secondary: 46L52; 46L07; 47A60.
   }
      \keywords{Multivariable operator theory, Multi-Toeplitz operator,  Full Fock space,    Free pluriharmonic function, Noncommutative domain,    Berezin transform.
 }
      \address{Department of Mathematics, The University of Texas
      at San Antonio \\ San Antonio, TX 78249, USA}
      \email{\tt gelu.popescu@utsa.edu}

\begin{abstract}
We initiate the study of weighted multi-Toeplitz operators associated with noncommutative regular domains ${\bf D}_q^m(\cH)\subset  B(\cH)^n$, $m, n\geq 1$, where $B(\cH)$ is the algebra of all bounded linear operators on a Hilbert space $\cH$. These operators are acting on the full Fock space with $n$ generators and have as symbols free pluriharmonic functions on the interior of the domain  ${\bf D}_q^m(\cH)$.
We prove that the set of all weighted multi-Toeplitz operators coincides with
 $$
\overline{\cA({\bf D}_q^m)^*+\cA({\bf D}_q^m)}^{\text\rm WOT},
$$
where the domain algebra $\cA({\bf D}_q^m)$ is the norm-closed unital non-selfadjoint algebra generated by the universal model $(W_1,\ldots, W_n)$  of the noncommutative domain ${\bf D}_q^m(\cH)$. These results are used to study the class of  free pluriharmonic functions on    ${\bf D}_q^m(\cH)^\circ$. Several classical results from complex analysis  concerning harmonic functions have analogues in our noncommutative setting. In particular, we show that the bounded  free pluriharmonic functions are precisely those  which are noncommutative Berezin transforms of weighted multi-Toeplitz operators, and solve the Dirichlet extension problem in this setting.
Using noncommutative Cauchy transforms, we provide a free analytic functional calculus for $n$-tuples of  operators, which extends to free pluriharmonic functions.
Our study of weighted multi-Toeplitz operators on Fock spaces is a blend of  multi-variable operator theory, noncommutative function theory, operator spaces, and harmonic analysis.
\end{abstract}

      \maketitle

\bigskip

\section*{Introduction}

Let $H^2(\DD)$ be the Hardy space of all analytic functions on the open unit disc
$\DD:=\{z\in \CC: \ |z|<1\}$ with square-sumable coefficients. An operator $T\in B(H^2(\DD))$ is called Toeplitz if
$$Tf=P_+(\varphi f),\qquad f\in H^2(\TT),
$$
 for some $\varphi \in L^\infty(\TT)$, where $P_+$ is the orthogonal projection of the Lebesgue space $L^2(\TT)$ onto the Hardy space $H^2(\TT)$, which is identified with $H^2(\DD)$.
 Brown and Halmos \cite{BH} proved that a necessary and sufficient condition that an operator on the Hardy space $H^2(\DD)$ be a Toeplitz operator is that its matrix
 $[\lambda_{ij}]$ with respect to the standard basis $e_k(z)=z^k$, $k\in \{0,1,\ldots\}$, be a Toeplitz  matrix, i.e
 $$
 \lambda_{i+1,j+1}=\lambda_{ij},\qquad  i,j\in \{0,1,\ldots\},
 $$
 which is equivalent to $S^*TS=T$, where $S$ is the unilateral shift on $H^2(\DD)$.
 In this case, $\lambda_{ij}=a_{i-j}$, where $\varphi =\sum_{k\in \ZZ}a_k \chi_k$ is the Fourier expansion of the symbol $\varphi\in L^\infty(\TT)$. The class of Toeplitz operators originates with O.~Toeplitz \cite{T} and has been studied extensively over the years, starting with Hartman and Wintner \cite{HW} and  the seminal paper of Brown and Halmos \cite{BH}.  The study of Toeplitz operators on the Hardy space $H^2(\DD)$ was extended to Hilbert spaces of holomorphic functions  on the unit disc (see \cite{HKZ}) such as the Bergman space and weighted Bergman space,
 and also to higher dimensional setting  involving holomorphic functions in several complex variables on various classes of domains in $\CC^n$ (see Upmeier's book \cite{U}).

 The class of Toeplitz operators is one of the most important classes of non-selfadjoint operators having    applications in  index theory and noncommutative geometry, prediction theory, boundary values problems for analytic functions, probability, information theory and control theory, and several other fields. We refer the reader to \cite{BS}, \cite{Dou}, \cite{RR}, and \cite{HKZ}  for a   comprehensive account on Toeplitz operators.

A polynomial $q\in \CC\left<Z_1,\ldots, Z_n\right>$ in $n$ noncommutative indeterminates is called {\it positive regular} if all its coefficients are positive, $q(0)=0$, and the coefficients of the linear terms $Z_1,\ldots, Z_n$ are different from zero. If $q=\sum_\alpha  a_\alpha Z_\alpha$ and $X=(X_1,\ldots, X_n)\in B(\cH)^n$, we define the completely positive map
$$
\Phi_{q, X}:B(\cH)\to B(\cH),\qquad \Phi_{q,X}(Y):=\sum_\alpha a_\alpha X_\alpha Y X_\alpha^* .
$$
For each $m\geq 1$,  we define the {\it noncommutative regular domain}
$$
{\bf D}_q^m(\cH):=\left\{ X:=(X_1,\ldots, X_n)\in B(\cH)^n: \ (id-\Phi_{q,X})^k(I)\geq 0 \  \text{\rm  for } 1\leq k\leq m\right\}.
$$
According to \cite{Po-domains} and  \cite{Po-domains-models}, each such a domain has a universal model $(W_1,\ldots, W_n)$ consisting of weighted left creation operators acting on the full Fock space with $n$ generators. We mention a few remarkable particular cases.

{\bf Single variable case: $n=1$}.
\begin{enumerate}
\item[(i)] If  $m=1$ and
$q=Z$,   the corresponding domain ${\bf D}_q^m(\cH)$ coincides with the closed unit ball  $[B(\cH)]_1:=\{X\in B(\cH): \ \|X\|\leq 1\}$, the study of which has generated the  Nagy-Foia\c s theory of contractions (see \cite{SzFBK-book}).
In this case, the universal model is the unilateral shift $S$  acting on the Hardy space $H^2(\DD)$. The Toeplitz operators on the Hardy space $H^2(\DD)$ have been studied extensively (see  for example \cite{Dou}, \cite{RR})
\item[(ii)] If
$m\geq 2$ and    $q=Z$,  the
    corresponding domain coincides with the set of all
    $m$-hypercontractions  studied by Agler  in \cite{Ag2},
    and  recently by
    Olofsson \cite{O1}, \cite{O2}. The corresponding universal model is the unilateral shift acting on the weighted Bergman space $A_m(\DD)$,  the Hilbert space of all analytic functions on the unit disc $\DD$ with
$$
\|f\|^2:=\frac{m-1}{\pi}\int_\DD|f(z)|^2 (1-|z|^2)^{m-2}dz<\infty.
$$
In \cite{LO}, Louhichi and Olofsson obtain a Brown-Halmos type characterization of Toeplitz operators with harmonic symbols on $A_m(\DD)$, which can be seen as  a  reproducing kernel Hilbert space with reproducing  kernel given by $\kappa_m(z,w):=(1-z\bar w)^{-m}$, $z,w\in \DD$. Their result was recently extended by Eschmeier and Langend\" orfer  \cite{EL} to the analytic functional  Hilbert space $H_m(\BB)$ on the unit ball $\BB\subset \CC^n$ given by the reproducing kernel $\kappa_m(z,w):=\left(1-\left<z,w\right>\right)^{-m}$ for  $z,w\in \BB$, where  $m\geq 1$.

    \end{enumerate}

{\bf Multivariable noncommutative case: $n\geq 2$}.
\begin{enumerate}
\item[(i)]
When $m=1$ and     $q=Z_1+\cdots +Z_n$,   the noncommutative domain ${\bf D}_q^m(\cH)$ coincides with the closed unit ball
$[B(\cH)^n]_1:=\{(X_1,\ldots, X_n):\ X_1 X_1^*+\cdots +X_nX_n^*\leq I\}$, the study of which has generated  a free analogue of Nagy-Foia\c s theory. The corresponding universal model is the $n$-tuple of left creation operators  $(S_1,\ldots, S_n)$ acting on the full Fock space with $n$ generators. A study of  unweighted multi-Toeplitz operators on the full Fock space with $n$ generators   was initiated in \cite{Po-multi}, \cite{Po-analytic} and has had an important impact in multivariable operator theory  and the structure of free semigroups algebras (see \cite{DP2}, \cite{DKP}, \cite{DLP}, \cite{Po-entropy}, \cite{Po-pluriharmonic}, \cite{Ken1}, \cite{Ken2}).

\item[(ii)]  When $m\geq 1$, $n\geq 1$, and $q$ is
   any positive regular  polynomial
     the domain ${\bf D}_q^m(\cH)$ was studied  in   \cite{Po-domains} (when $m=1$), and in \cite{Po-domains-models} (when $m\geq 2$). In this case, the corresponding universal model is an $n$-tuple of weighted left creation operators acting on the full Fock space.
We remark that, in the particular case when
  $m\geq 2$ and  $q=Z_1+\cdots +Z_n$,  the corresponding domain can be seen as a noncommutative $m$-hyperball,  the elements
    of  which  can be viewed  as
    multivariable noncommutative analogues of Agler's
    $m$-hypercontractions. As far as we know,  Toeplitz operators have not  been introduced or studied  in this very general setting.

\end{enumerate}
The goal of the present paper is to initiate the study of weighted multi-Toeplitz operators associated with noncommutative regular domains ${\bf D}_q^m(\cH)\subset  B(\cH)^n$, $m, n\geq 1$, when $q\in \CC\left<Z_1,\ldots, Z_n\right>$  is any positive regular polynomial in noncommutative indeterminates. This is accompanied by the study of their symbols which are  free pluriharmonic functions on the interior of the domain  ${\bf D}_q^m(\cH)$.

In Section 1, we present some background from  \cite{Po-domains}  and \cite{Po-domains-models} on the noncommutative domains ${\bf D}_q^m(\cH)$, their universal models, and the associated noncommutative Berezin transforms.

In Section 2, we introduce  the weighted multi-Toeplitz operators which are acting on  the full Fock space $F^2(H_n)$ with $n$ generators and are  associated with the noncommutative domain ${\bf D}_q^m(\cH)\subset B(\cH)^n$. We show that they are uniquely determined by their free pluriharmonic symbols
$$
\varphi(X_1,\ldots, X_n)=\sum_{k=1}^\infty \sum_{\alpha\in \FF_n^+, |\alpha|=k} b_\alpha X_\alpha^* +\sum_{k=0}^\infty \sum_{\alpha\in \FF_n^+, |\alpha|=k} a_\alpha X_\alpha, \qquad a_\alpha, b_\alpha\in \CC,
$$
where $\FF_n^+$ is the unital free semigroup with $n$ generators and the convergence of the series is in the operator norm topology for any $n$-tuple $(X_1,\ldots, X_n)$ in the interior of ${\bf D}_q^m(\cH)$. We prove that the set of all weighted multi-Toeplitz
operators coincides with
$$
\overline{\cA({\bf D}_q^m)^*+\cA({\bf D}_q^m)}^{\text\rm WOT},
$$
where the domain algebra $\cA({\bf D}_q^m)$ is the norm-closed unital non-selfadjoint algebra generated by the universal model $(W_1,\ldots, W_n)$  of the noncommutative domain ${\bf D}_q^m(\cH)$. In the particular case when $n=1$ and $q=Z$, we obtain a characterization of the Toeplitz operators with harmonic symbol on the Bergman space $A_m(\DD)$, which should be compared with the corresponding result from \cite{LO}.

In Section 3, we provide basic results concerning the free pluriharmonic functions on the  noncommutative domain ${\bf D}_q^m(\cH)^\circ$ and show that they are characterized by a mean value property. This result is used to obtain an analogue of Weierstrass theorem for free pluriharmonic functions and to show that the set $Har(({\bf D}_q^m)^\circ)$  of all pluriharmonic functions is a complete metric space with respect to an appropriate metric $\rho$. We also obtain, in this section,  a Schur type result \cite{Sc} characterizing the free pluriharmonic functions with positive real parts in terms of positive semi-definite weighted multi-Toeplitz kernels.

Section 4 concerns  the space $Har^\infty(({\bf D}_q^m)^\circ)$  of all bounded   free pluriharmonic functions on ${\bf D}_q^m(\cH)^\circ$. One of the main results states that
$F\in Har^\infty(({\bf D}_q^m)^\circ)$ if and only if it is  the noncommutative Berezin transform of a weighted multi-Toeplitz operator. Moreover, we prove that the map
$$
\Phi:Har^\infty(({\bf D}_{q}^m)^\circ)\to \overline{{\cA}({\bf D}_q^m)^*+{\cA}({\bf D}_q^m)}^{WOT}
$$
  defined by
 $\Phi(F):=\text{SOT-}\lim_{r\to 1} F(rW)$
 is a completely   isometric isomorphism of operator spaces. A noncommutative version of the Dirichlet extension problem for harmonic functions (see \cite{H}) is also provided. We prove that $F\in Har(({\bf D}_q^m)^\circ)$ has a continuous extension in the operator norm topology to ${\bf D}_q^m(\cH)$ if and only if there exists a multi-Toeplitz operator
  $\psi\in  \overline{{\cA}({\bf D}_q^m)^*+{\cA}({\bf D}_q^m)}^{\|\cdot\|}$ such that $F$ is the noncommutative
  Berezin transform of $\psi$.

In Section 5, using noncommutative Cauchy transforms associated with the domain ${\bf D}_q^m(\cH)$, we provide a free analytic functional calculus for $n$-tuples of operators $X=(X_1,\ldots, X_n)\in B(\cH)^n$ with the spectral radius of the reconstruction operator $R_{\tilde q, X}$ strictly less than 1. This extends to free pluriharmonic functions,  proving that the map
$$\Psi_{q,X}: \left(Har(({\bf D}_q^m)^\circ), \rho\right)\to  \left(B(\cH), \|\cdot \|\right)
$$ defined by $\Psi_{q,X}(G):=G(X)$ is continuous  and its restriction $\Psi_{q,X}|_{Hol(({\bf D}_q^m)^\circ)}$ is a continuous unital algebra homomorphism. Several consequences of this result are also provided.

We should mention that our results are presented in the more general setting of weighted multi-Toeplitz matrices with operator-valued entries and  free pluriharmonic functions  with operator-valued coefficients, while the noncommutative domain ${\bf D}_f^m(\cH)$ is generated by any  positive regular free holomorphic functions  $f$ in a neighborhood of the origin.

In a forthcoming paper \cite{Po-Toeplitz-Hyperball}, we obtain a Brown-Halmos characterization of the weighted  multi-Toeplitz operators associated with the noncommutative  $m$-hyperball (the case when $q=Z_1+\cdots + Z_n, m\geq 2$) which is a noncommutative version of Eschmeier and Langend\" orfer    recent commutative result \cite{EL}. This result shows that the weighted multi-Toeplitz are characterized by an  algebraic equation involving the universal model $(W_1,\ldots, W_n)$ of the noncommutative $m$-hyperball. It remains to be seen if this characterization  extends to the more general domains ${\bf D}_q^m$, where $q$ is
   any positive regular  polynomial.

\bigskip

\section{Noncommutative domains, universal models, and Berezin transforms}

This section contains some definitions and  the necessary background from  \cite{Po-domains}  and \cite{Po-domains-models} on the noncommutative regular domains ${\bf D}_f^m(\cH)$, their universal models, and the associated noncommutative Berezin transforms.

Let $\FF_n^+$ be the unital free semigroup on $n$ generators
$g_1,\ldots, g_n$ and the identity $g_0$.  The length of $\alpha\in
\FF_n^+$ is defined by $|\alpha|:=0$ if $\alpha=g_0$  and
$|\alpha|:=k$ if
 $\alpha=g_{i_1}\cdots g_{i_k}$, where $i_1,\ldots, i_k\in \{1,\ldots, n\}$.
 If    $Z_1,\ldots,Z_n$ are noncommutative
indeterminates, we denote $Z_\alpha:=
Z_{i_1}\ldots Z_{i_k}$ if $\alpha=g_{i_1}\ldots g_{i_k}\in \FF_n^+$,
\ $i_1,\ldots i_k\in \{1,\ldots,n\}$, and $Z_{g_0}:=1$.
Similarly, if  $X:=(X_1,\ldots, X_n)\in B(\cH)^n$, where $B(\cH)$ is the algebra
of all bounded linear operators on the Hilbert space $\cH$,    we
denote $X_\alpha:= X_{i_1}\cdots X_{i_k}$  and $X_{g_0}:=I_\cH$.
A formal power series   $f:= \sum_{\alpha\in \FF_n^+}
a_\alpha Z_\alpha$, \ $a_\alpha\in \CC$,  in noncommutative indeterminates $Z_1,\ldots, Z_n$,  is  called {\it free holomorphic
function} on the noncommutative ball  $[B(\cH)^n]_\rho$ for some
$\rho>0$, where
$$[B(\cH)^n]_\rho:=\{(X_1,\ldots, X_n)\in B(\cH)^n: \
\|X_1X_1^*+\cdots + X_nX_n^*\|^{1/2}< \rho\},
$$
if the series $\sum_{k=0}^\infty \sum_{|\alpha|=k} a_\alpha
X_\alpha$ is convergent in the operator norm topology for any
$(X_1,\ldots, X_n)\in [B(\cH)^n]_\rho$.
  According to  \cite{Po-holomorphic},  $f$  is  a free holomorphic
function on   $[B(\cH)^n]_\rho$ for any Hilbert space $\cH$ if and only if
\begin{equation*}
\limsup_{k\to\infty} \left( \sum_{|\alpha|=k}
|a_\alpha|^2\right)^{1/2k}\leq \frac{1}{\rho}.
\end{equation*}
Throughout this paper, we
  assume that  $a_\alpha\geq 0$ for any $\alpha\in \FF_n^+$, \ $a_{g_0}=0$,
 \ and  $a_{g_i}>0$ if  $i\in\{1,\ldots, n\}$.
 A function $f$ satisfying  all these conditions on the coefficients is
 called  {\it positive regular free holomorphic function on}
 $[B(\cH)^n]_\rho$.
 Let $\Phi_{f,X}: B(\cH)\to B(\cH)$ be the completely positive linear map  given by $\Phi_{f,X}(Y):=\sum_{|\alpha|\geq 1} a_\alpha X_\alpha Y X_\alpha^*$ for  $Y\in B(\cH)$, where the convergence is in the week operator topology, and define the {\it noncommutative regular domain}
$$
{\bf D}_f^m(\cH):=\left\{ X:=(X_1,\ldots, X_n)\in B(\cH)^n: \ (id-\Phi_{f,X})^k(I)\geq 0 \  \text{\rm  for } 1\leq k\leq m\right\}.
$$
  We saw in  \cite{Po-domains-models}, that $X\in {\bf D}_f^m(\cH)$ if and only if $\Phi_{f,X}(I)\leq I$ and $(id-\Phi_{f,X})^m(I)\geq 0$. The {\it abstract noncommutative domain} ${\bf D}_f^m$  is the disjoint union $\coprod_{\cH} {\bf D}_f^m(\cH)$, over all Hilbert spaces $\cH$.
We associate with  the abstract domain ${\bf D}_f^m$
   a unique $n$-tuple
$(W_1,\ldots, W_n)$ of weighted shifts, as follows.
 Define  $b_{g_0}^{(m)}:=1$ and
\begin{equation}
\label{b-al}
 b_\alpha^{(m)}= \sum_{j=1}^{|\alpha|}
\sum_{{\gamma_1\cdots \gamma_j=\alpha }\atop {|\gamma_1|\geq
1,\ldots, |\gamma_j|\geq 1}} a_{\gamma_1}\cdots a_{\gamma_j}
\left(\begin{matrix} j+m-1\\m-1
\end{matrix}\right)  \qquad
\text{ if } \ \alpha\in \FF_n^+, |\alpha|\geq 1.
\end{equation}
Let $H_n$ be an $n$-dimensional complex  Hilbert space with orthonormal
      basis
      $e_1$, $e_2$, $\dots,e_n$, where $n\in\NN:=\{1,2,\dots\}$.        We consider
      the full Fock space  of $H_n$ defined by
      $$F^2(H_n):=\bigoplus_{k\geq 0} H_n^{\otimes k},$$
      where $H_n^{\otimes 0}:=\CC 1$ and $H_n^{\otimes k}$ is the Hilbert
      tensor product of $k$ copies of $H_n$.
Let  $D_i:F^2(H_n)\to F^2(H_n)$, $i\in \{1,\ldots, n\}$, be the  diagonal
operators defined  by setting
$$
D_ie_\alpha:=\sqrt{\frac{b_\alpha^{(m)}}{b_{g_i \alpha}^{(m)}}}
e_\alpha,\qquad
 \alpha\in \FF_n^+,
$$
where $\{e_\alpha\}_{\alpha\in \FF_n^+}$ is the orthonormal basis of the full Fock space $F^2(H_n)$.
The {\it weighted left creation  operators}
$W_i:F^2(H_n)\to F^2(H_n)$ associated with ${\bf D}_f^m$ are defined  by $W_i:=S_iD_i$, where
 $S_1,\ldots, S_n$ are the left creation operators on the full
 Fock space $F^2(H_n)$, i.e.
      $$
       S_i\varphi:=e_i\otimes\varphi, \qquad  \varphi\in F^2(H_n),\  i\in \{1,\ldots,n\}.
      $$
       A simple calculation reveals that
\begin{equation}\label{WbWb}
W_\beta e_\gamma= \frac {\sqrt{b_\gamma^{(m)}}}{\sqrt{b_{\beta
\gamma}^{(m)}}} e_{\beta \gamma} \quad \text{ and }\quad W_\beta^*
e_\alpha =\begin{cases} \frac
{\sqrt{b_\gamma^{(m)}}}{\sqrt{b_{\alpha}^{(m)}}}e_\gamma& \text{ if
}
\alpha=\beta\gamma \\
0& \text{ otherwise }
\end{cases}
\end{equation}
 for any $\alpha, \beta \in \FF_n^+$.
      We recall  from \cite{Po-domains-models} that the weighted left creation
  operators $W_1,\ldots, W_n$      have the following properties:
 \begin{enumerate}
 \item[(i)] $\sum\limits_{|\beta|\geq 1} a_\beta W_\beta W_\beta^*\leq I$, where the
convergence is in the strong operator topology;
 \item[(ii)]
 $\left(id-\Phi_{f,W}\right)^{m}(I)=P_\CC$, where $P_\CC$ is the
 orthogonal projection of $F^2(H_n)$ on $\CC 1\subset F^2(H_n)$, and the map
 $\Phi_{f,W}:B(F^2(H_n))\to
B(F^2(H_n))$    is defined  by
$$\Phi_{f,W}(Y):=\sum\limits_{|\alpha|\geq 1} a_\alpha W_\alpha
YW_\alpha^*,
$$
where the convergence is in the weak operator topology;
 \item[(iii)]  $W:=(W_1,\ldots, W_n)$  is a {\it pure} element of the   domain ${\bf D}_f^m(F^2(H_n))$, i.e. $\lim\limits_{p\to\infty} \Phi^p_{f,W}(I)=0$ in the strong operator
 topology.
 \end{enumerate}

The right creation operators are defined by $
       R_i\varphi:=\varphi\otimes e_i$,   $i\in \{1,\ldots,n\}$.
We can also define the {\it weighted right creation operators}
$\Lambda_i:F^2(H_n)\to F^2(H_n)$ by setting $\Lambda_i:= R_i G_i$,
$i=1,\ldots, n$,  where
 each  diagonal operator $G_i$, $i=1,\ldots,n$,  is given by
$$
G_ie_\alpha:=\sqrt{\frac{b_\alpha^{(m)}}{b_{ \alpha g_i}^{(m)}}}
e_\alpha,\quad
 \alpha\in \FF_n^+,
$$
where the coefficients $b_\alpha^{(m)}$, $\alpha\in \FF_n^+$, are
described  by relation \eqref{b-al}. In this case, we have
\begin{equation}\label{WbWb-r}
\Lambda_\beta e_\gamma= \frac {\sqrt{b_\gamma^{(m)}}}{\sqrt{b_{
\gamma \tilde\beta}^{(m)}}} e_{ \gamma \tilde \beta} \quad \text{
and }\quad \Lambda_\beta^* e_\alpha =\begin{cases} \frac
{\sqrt{b_\gamma^{(m)}}}{\sqrt{b_{\alpha}^{(m)}}}e_\gamma& \text{ if
}
\alpha=\gamma \tilde \beta \\
0& \text{ otherwise }
\end{cases}
\end{equation}
 for any $\alpha, \beta \in \FF_n^+$, where $\tilde \beta$ denotes
 the reverse of $\beta=g_{i_1}\cdots g_{i_k}$, i.e.,
 $\tilde \beta=g_{i_k}\cdots g_{i_1}$.
As in the case of weighted left creation operators, one can show
that
 \begin{equation}
 \label{tild-Lamb} \sum\limits_{|\beta|\geq 1}
a_{\tilde\beta} \Lambda_\beta
 \Lambda_\beta^*\leq I\quad \text{ and } \quad \left(id-\Phi_{\tilde
 f,\Lambda}\right)^m(I)=P_\CC,
 \end{equation}
 where
  $\tilde{f}(Z):=\sum_{|\alpha|\geq 1} a_{\tilde \alpha} Z_\alpha$, $\tilde
\alpha$ denotes the reverse of $\alpha$, and $\Phi_{\tilde f,
\Lambda}(Y):=\sum_{|\alpha|\geq 1} a_{\tilde \alpha} \Lambda_\alpha
Y \Lambda_\alpha^*$ for any  $Y\in B(F^2(H_n))$, with the convergence is in
the weak operator topology.

Let $X:=(X_1,\ldots, X_n)\in {\bf D}_f^m(\cH)$ and let
$K_{f,X}^{(m)}:\cH\to F^2(H_n)\otimes
\overline{\Delta_{m,X}(\cH)}$  be the {\it noncommutative Berezin kernel} defined by
\begin{equation*}
 K_{f,X}^{(m)}h:=\sum_{\alpha\in \FF_n^+} \sqrt{b_\alpha^{(m)}}
e_\alpha\otimes \Delta_{m,X} X_\alpha^* h,\qquad h\in \cH,
\end{equation*}
where $\Delta_{m,X}:=\left[(I- \Phi_{f,X})^m(I) \right]^{1/2}$ and the coefficients $b_{\alpha}^{(m)}$ are given by
relation \eqref{b-al}.
We know that
\begin{equation*}
K_{f,X}^{(m)}X_i^*=(W_i^*\otimes
I )K_{f,X}^{(m)} \qquad i\in \{1,\ldots, n\}.
\end{equation*}
 Assume that $X$ is a {\it pure} $n$-tuple, i.e. $\Phi_{f,X}^k(I)\to 0$ strongly, as $k\to \infty$. Then $K_{f,X}^{(m)}$ is an isometry and     the $n$-tuple   $W:=(W_1,\ldots, W_n)$
 plays the role of the {\it universal model for the  noncommutative domain}
${\bf D}_f^m$.

Let $\varphi(W):=\sum\limits_{\beta\in \FF_n^+} c_\beta
W_\beta$, $c_\beta \in \CC$, be a formal sum with the property  that $\sum_{\beta\in
\FF_n^+} |c_\beta|^2 \frac{1}{b_\beta^{(m)}}<\infty$.
In \cite{Po-domains-models}, we proved that
$\sum\limits_{\beta\in \FF_n^+} c_\beta W_\beta (p)\in F^2(H_n)$ for
any $p\in \cP$, where $\cP\subset F^2(H_n)$ is the set of all polynomial in
$e_\alpha$, $\alpha\in \FF_n^+$.
If
$$
\sup_{p\in\cP, \|p\|\leq 1} \left\|\sum\limits_{\beta\in \FF_n^+}
c_\beta W_\beta (p)\right\|<\infty,
$$
then there is a unique bounded operator acting on $F^2(H_n)$, which
we should also  denote by $\varphi(W)$, such that
$$
\varphi(W)p=\sum\limits_{\beta\in \FF_n^+} c_\beta
W_\beta (p)\quad \text{ for any } \ p\in \cP.
$$
The set of all operators $\varphi(W)\in B(F^2(H_n))$
satisfying the above-mentioned properties is denoted by
$F^\infty({\bf D}^m_f)$.   One can prove that $F^\infty({\bf D}^m_f)$
is a Banach algebra, which we call Hardy algebra associated with the
noncommutative domain ${\bf D}^m_f$.
We introduce the  domain  algebra $\cA({\bf D}^m_f)$  to be the norm closure
of all polynomials in the weighted left creation operators
$W_1,\ldots, W_n$ and the identity. Using the weighted right
creation operators
 associated with ${\bf D}^m_f$, one can  also  define    the corresponding
  domain algebra ${\cR}({\bf D}^m_f)$.

 In a similar manner,  using the weighted right creation
operators $\Lambda:=(\Lambda_1,\ldots, \Lambda_n)$
 associated with ${\bf D}^m_f$, one can   define    the corresponding
     the Hardy algebra $R^\infty({\bf D}^m_f)$.
More precisely, if $g(\Lambda)=\sum\limits_{\beta\in \FF_n^+} c_{\tilde\beta
}\Lambda_\beta $ is a formal sum with the property  that
$\sum_{\beta\in \FF_n^+} |c_\beta|^2
\frac{1}{b_\beta^{(m)}}<\infty$  and such that
$$
\sup_{p\in\cP, \|p\|\leq 1} \left\|\sum\limits_{\beta\in \FF_n^+}
 c_{\tilde\beta} \Lambda_\beta (p)\right\|<\infty,
$$
then there is a unique bounded operator on $F^2(H_n)$, which we also
 denote by $g(\Lambda)$, such that
$$
g(\Lambda)p=\sum\limits_{\beta\in \FF_n^+}
c_{\tilde\beta} \Lambda_\beta (p)\quad \text{ for any } \ p\in \cP.
$$
The set of all operators $g(\Lambda)\in
B(F^2(H_n))$
satisfying the above-mentioned properties is denoted by $R^\infty({\bf D}^m_f)$. We proved in \cite{Po-domains-models} that $F^\infty({\bf D}^m_f)'= R^\infty({\bf D}^m_f)$ and $F^\infty({\bf D}^m_f)''=F^\infty({\bf D}^m_f)$, where $'$ stands for the commutant.

The  {\it noncommutative Berezin transform at} $X\in {\bf D}_f^m(\cH)$, where $ X$ is a pure element,
 is the map ${\bf B}^{(m)}_X: B(F^2(H_{n}))\to B(\cH)$
 defined by
 \begin{equation*}
 {\bf B}^{(m)}_X[g]:= {K_{f,X}^{(m)}}^* (g\otimes I_\cH)K_{f,X}^{(m)},
 \quad g\in B(F^2(H_n)),
 \end{equation*}
where the  $K_{f,X}^{(m)}:\cH \to F^2(H_n)\otimes
\cH$  is noncommutative Berezin kernel.
 Let $\cP({W})$  be the set of all polynomials $p({W})$  in  the operators ${ W}_{i}$, $i\in \{1,\ldots, k\}$,  and the identity.
  If $g$ is in the operator space
  $$\cS:=\overline{\text{\rm  span}} \{ p({W})q({W})^*:\
p({W}),q({W}) \in  \cP({W})\},
$$
where the closure is in the operator norm, we  define the Berezin transform at  $X\in {\bf D}_f^m(\cH)$, by
  $${\bf B}^{(m)}_X[g]:=\lim_{r\to 1} {K_{f,rX}^{(m)}}^* (g\otimes I_\cH)K_{f,rX}^{(m)},
 \qquad g\in  \cS,
 $$
 where the limit is in the operator norm topology.
In this case, the Berezin transform at ${X}$ is a unital  completely positive linear  map such that
 $${\bf B}^{(m)}_X({ W}_{\alpha} {W}_{\beta}^*)={X}_{\alpha} {X}_{\beta}^*, \qquad \alpha, \beta \in \FF_{n}.
 $$
   If, in addition,
 ${X}$ is a  pure $n$-tuple in ${\bf D}_f^m(\cH)$,
then
$\lim_{r\to 1} {\bf B}^{(m)}_{rX}[g]= {\bf B}^{(m)}_{X}[g]$, $g\in \cS.$
More on  noncommutative Berezin transforms  and their applications can be found in \cite{Po-poisson}, \cite{Po-domains-models},  \cite{Po-domains}, \cite{Po-Berezin2}, and  \cite{Po-Berezin1}.

\bigskip

\section{Wieghted multi-Toeplitz operators on Fock spaces}

In this section,  we introduce  the weighted multi-Toeplitz operators     associated with the noncommutative domain ${\bf D}_f^m(\cH)\subset B(\cH)^n$. We show that they are uniquely determined by their free pluriharmonic symbols and provide a characterization in terms of the domain algebra $\cA({\bf D}_f^m)$.

In what follows, we need some notation.
 If $\omega, \gamma\in \FF_n^+$,
we say that $\omega
\geq_{r}\gamma$ if there is $\sigma\in
\FF_n^+ $ such that $\omega=\sigma \gamma$. In this
case  we set $\omega\backslash_r \gamma:=\sigma$. If $\sigma\neq g_0$ we write $\omega>_r \gamma$. We say that $\omega$ and $\gamma$ are {\it comparable} if either $\omega
\geq_{r}\gamma$ or $\gamma>_r\omega$.
 If $\omega, \gamma\in \FF_n^+$ are comparable, we consider the weights
 $$
 \lambda_{\omega, \gamma}:=\begin{cases} \sqrt{\frac{b_\omega^{(m)}}{b_\gamma^{(m)}}},&\ \text{ if } \ \omega\geq_r\gamma,\\
 \sqrt{\frac{b_\gamma^{(m)}}{b_\omega^{(m)}}},&\ \text{ if } \ \gamma>_r\omega,
 \end{cases}
 $$
 where the coefficients $b_\alpha^{(m)}$, $\alpha\in \FF_n^+$,  are given by relation \eqref{b-al}. Let  $\cE$ be a separable Hilbert space and let $\left[ C_{\omega,\gamma}\right]_{\FF_n^+\times \FF_n^+}$  be
   the  operator matrix representation of $T\in B(\cE\otimes F^2(H_n))$, i.e.
 $$
 \left<C_{\omega,\gamma} x,y\right>:=\left<T(x\otimes e_\gamma), y\otimes e_\omega\right>
 $$
 for any $\omega, \gamma\in \FF_n^+$ and $x,y\in \cE$.

 \begin{definition}\label{def-Toeplitz}
  We say that $T$ is a weighted right multi-Toeplitz operator    if
 for each $i\in \{1,\ldots, n\}$ and $\omega, \gamma, \alpha, \beta\in \FF_n^+$,
\begin{equation*}
\lambda_{\omega g_i,\gamma g_i}C_{\omega g_i,\gamma g_i}=\lambda_{\omega, \gamma} C_{\omega,\gamma},\qquad \text{ if } \ \omega, \gamma \ \text{  are comparable},
\end{equation*}
 and $C_{\alpha, \beta}=0$ if $\alpha, \beta$ are not comparable.
\end{definition}
We remark that when $n=m=1$, $f=Z$, and $\cE=\CC$ we recover   the classical Toeplitz operators on the
Hardy space $H^2(\DD)$. Also if $n\geq 2$, $m=1$,  and $f=Z_1+\cdots +Z_n$ we obtain the unweighted  right multi-Toeplitz operators on the full Fock space $F^2(H_n)$ (see \cite{Po-multi}, \cite{Po-analytic} and \cite{Po-pluriharmonic}). In this case, we have $b_\alpha^{(m)}=1$ for any $\alpha\in \FF_n^+$  and the condition above becomes
$$
C_{\omega g_i, \gamma g_i}=
\begin{cases} C_{\omega, \gamma},& \text{if } \ \omega\geq_r\gamma \ \text{or } \ \gamma>_r\omega,\\
0,& \text{otherwise},
\end{cases}
$$
and $C_{\alpha, \beta}=0$ if $\alpha, \beta$ are not comparable.

For an equivalent and more transparent  definition of  weighted  right multi-Toeplitz operators on the full Fock space $F^2(H_n)$ see the remarks following the next theorem.

\begin{theorem}\label{Fourier}
Any  weighted right multi-Toeplitz operator $T\in B(\cE\otimes F^2(H_n))$ has a    formal Fourier  representation
$$
\varphi(W):= \sum_{|\alpha|\geq 1} B_{(\alpha)}\otimes  W_\alpha^*
+ A_{(0)}\otimes  I +\sum_{|\alpha|\geq 1} A_{(\alpha)}\otimes W_\alpha,
$$
 where $\{A_{(\alpha)}\}_{\alpha\in \FF_n^+}$ and $\{B_{(\alpha)}\}_{\alpha\in
\FF_n^+\backslash \{g_0\}}$  are some operators on the Hilbert space $\cE$,  such that  $$
Tq=\varphi(W)q,\qquad
q=\sum_{|\alpha|\leq k} h_\alpha\otimes e_\alpha,
$$
for any  $h_\alpha\in \cE$ and $k\in \NN$.
 If $T_1$,  $T_2$ are weighted right multi-Toeplitz operators  having the
same formal Fourier  representation, then $T_1=T_2$.
\end{theorem}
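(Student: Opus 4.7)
The plan is to read off the candidate Fourier coefficients from a single row and column of the matrix $[C_{\omega,\gamma}]$ of $T$, use the weighted multi-Toeplitz recursion to express every other entry in terms of those coefficients, and then verify coordinate by coordinate that $\varphi(W)q$ reproduces $Tq$ on polynomials.

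First, I would set
$$
A_{(g_0)}:=C_{g_0,g_0},\qquad A_{(\alpha)}:=\sqrt{b_{\alpha}^{(m)}}\,C_{\alpha,g_0},\qquad B_{(\alpha)}:=\sqrt{b_{\alpha}^{(m)}}\,C_{g_0,\alpha}\quad(|\alpha|\geq 1).
$$
Iterating the relation $\lambda_{\omega g_i,\gamma g_i}C_{\omega g_i,\gamma g_i}=\lambda_{\omega,\gamma}C_{\omega,\gamma}$ by peeling one generator at a time off the common right factor of two comparable words $\omega,\gamma$ reduces the pair either to $(\sigma,g_0)$ or to $(g_0,\sigma)$, and yields
$$
\lambda_{\omega,\gamma}C_{\omega,\gamma}=\begin{cases} A_{(\omega\backslash_r\gamma)}, & \omega\geq_r\gamma,\\
B_{(\gamma\backslash_r\omega)}, & \gamma>_r\omega,\end{cases}
$$
with the convention $\omega\backslash_r\omega=g_0$. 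Combined with $C_{\omega,\gamma}=0$ for non-comparable $\omega,\gamma$, this shows that the entire matrix of $T$ is determined by the two sequences $\{A_{(\alpha)}\}$ and $\{B_{(\alpha)}\}$.

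Next, I would check $Tq=\varphi(W)q$ for an arbitrary polynomial $q=\sum_{|\gamma|\leq k}h_\gamma\otimes e_\gamma$ by matching the $e_\omega$-component on each side. Since $W_\alpha^*$ annihilates $e_\gamma$ unless $\gamma=\alpha\gamma'$, the $B$-series automatically truncates to $|\alpha|\leq k$, and at each fixed $\omega$ only finitely many $\alpha$ with $\omega=\alpha\gamma$, $|\gamma|\leq k$, contribute to the $A$-series. Using the explicit formulas for $W_\alpha e_\gamma$ and $W_\alpha^*e_\gamma$ together with the identity $\sqrt{b_\gamma^{(m)}/b_\omega^{(m)}}=1/\lambda_{\omega,\gamma}$ for comparable $\omega,\gamma$, a direct substitution shows that the $e_\omega$-component of $\varphi(W)q$ equals $\sum_{|\gamma|\leq k}C_{\omega,\gamma}h_\gamma$, the non-comparable $\gamma$ contributing zero precisely because $C_{\omega,\gamma}=0$ there. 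This is exactly the $e_\omega$-component of $Tq$. Uniqueness is then immediate: if $T_1,T_2$ share the same formal Fourier representation, then $T_1q=T_2q$ on the dense subspace of polynomials in $\cE\otimes F^2(H_n)$, and by boundedness $T_1=T_2$.

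The main obstacle is bookkeeping rather than conceptual: one has to align the weights $b_\alpha^{(m)}$, the reversed-suffix convention in $\geq_r$, and the "shift on the right" action of $W_\alpha$, so that the factors $\sqrt{b_\gamma^{(m)}/b_\omega^{(m)}}$ produced when $W_\alpha$ or $W_\alpha^*$ acts on a basis vector exactly cancel the $\lambda_{\omega,\gamma}$ appearing in the Toeplitz relation and reconstitute the correct matrix entry $C_{\omega,\gamma}$.
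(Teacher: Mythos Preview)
Your proposal is correct and follows essentially the same route as the paper: define the Fourier coefficients from the $(\cdot,g_0)$-column and $(g_0,\cdot)$-row of the matrix of $T$, iterate the weighted Toeplitz recursion to express every entry $C_{\omega,\gamma}$ in terms of those coefficients, and then verify $\langle Tq,y\otimes e_\omega\rangle=\langle\varphi(W)q,y\otimes e_\omega\rangle$ using the explicit action of $W_\alpha$ and $W_\alpha^*$ on basis vectors. One small difference in emphasis: the paper pauses to show directly that $\varphi(W)(x\otimes e_\beta)$ is a bona fide vector in $\cE\otimes F^2(H_n)$, using the inequality $b_\alpha^{(m)}b_\beta^{(m)}\leq \binom{|\beta|+m-1}{m-1}b_{\alpha\beta}^{(m)}$ together with $\sum_\alpha\|A_{(\alpha)}x\|^2/b_\alpha^{(m)}<\infty$ (the latter coming from $\|T(x\otimes 1)\|<\infty$), whereas you implicitly obtain this from the coordinate-wise identification with $Tq$; both are valid, but it would strengthen your write-up to make that point explicit.
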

\begin{proof}  First, we note that, using Definition  \ref{def-Toeplitz}, one can prove that $T\in B(\cE\otimes F^2(H_n))$ is a weighted right multi-Toeplitz operator if and only if  the entries of its   matrix representation $\left[ C_{\omega,\gamma}\right]_{\FF_n^+\times \FF_n^+}$  satisfy the following relations:
\begin{enumerate}
\item[(i)] $C_{\sigma\gamma, \gamma}=\sqrt{\frac{b_\sigma^{(m)} b_\gamma^{(m)}}{b_{\sigma\gamma}^{(m)}}}C_{\sigma, g_0}$ for any $\sigma, \gamma\in \FF_n^+$;
    \item[(ii)] $C_{\gamma, \sigma\gamma}=\sqrt{\frac{b_\sigma^{(m)} b_\gamma^{(m)}}{b_{\sigma\gamma}^{(m)}}}C_{g_0,\sigma}$ for any $\sigma, \gamma\in \FF_n^+$;
        \item[(iii)] $C_{\alpha, \beta}=0$ if $(\alpha, \beta) \in \FF_n^+\times \FF_n^+$ is not of the form  $(\sigma\gamma, \gamma)$ or $(\gamma, \sigma\gamma)$ for   $\sigma, \gamma\in \FF_n^+$.
         \end{enumerate}
Consequently,  $T\in B(\cE\otimes F^2(H_n))$ is a
weighted right multi-Toeplitz   if and only if

\begin{equation}\label{MT}
\left<T(x\otimes e_\gamma), y\otimes e_\omega\right>
=
\begin{cases}\frac{\sqrt{b_{\omega\backslash_r\gamma}^{(m)}}\sqrt{b_\gamma^{(m)}}}
{\sqrt{b_\omega^{(m)}}}\left<T(x\otimes 1),y\otimes e_{\omega\backslash_r\gamma}\right>, & \text{if } \ \omega\geq_r\gamma,\\
\frac{\sqrt{b_{\gamma\backslash_r\omega}^{(m)}}\sqrt{b_\omega^{(m)}}}
{\sqrt{b_\gamma^{(m)}}}\left<T(x\otimes e_{\gamma\backslash_r\omega}),y\otimes 1\right>, & \text{if } \ \gamma >_r\omega,\\
0, & \text{otherwise}.
\end{cases}
\end{equation}
We define the formal  Fourier representation of
$T$  by setting
$$
\varphi(W):= \sum_{|\alpha|\geq 1} B_{(\alpha)}\otimes  W_\alpha^*
+ A_{(0)}\otimes  I +\sum_{|\alpha|\geq 1} A_{(\alpha)}\otimes W_\alpha,
$$
where the coefficients are given by
\begin{equation}\label{f-coef}
\begin{split}
\left<A_{(\alpha)}x,y\right> &:=\sqrt{b_\alpha^{(m)}}\left< T(x\otimes 1), y\otimes
e_\alpha\right>, \quad \alpha\in \FF_n^+,\\
 \left<B_{(\alpha)}x,
y\right>&:=\sqrt{b_\alpha^{(m)}}\left< T(x\otimes e_\alpha), y\otimes 1\right>, \quad
\alpha\in \FF_n^+\backslash\{g_0\},
\end{split}
\end{equation}
for any $x,y\in \cE$. We also set $A_{(0)}:=A_{(g_0)}$.
Hence, we deduce that
$$
T(x\otimes 1)=\sum_{\alpha\in \FF_n^+}\frac{1}{\sqrt{b_\alpha^{(m)}}} A_{(\alpha)}x\otimes e_\alpha
$$
and
$$
T^*(x\otimes 1)=\sum_{\alpha\in \FF_n^+, |\alpha|\geq 1}\frac{1}{\sqrt{b_\alpha^{(m)}}} B_{(\alpha)}^*x\otimes e_\alpha
$$
for any $x\in \cE$. As a consequence, we can see that $\sum_{|\alpha|\geq 1} \frac{1}{b_\alpha^{(m)}}A_{(\alpha)}^* A_{(\alpha)}$ and
$\sum_{|\alpha|\geq 1} \frac{1}{b_\alpha^{(m)}} B_{(\alpha)} B_{(\alpha)}^*$ are WOT
convergent series. We note that
$$
\varphi(W)(x\otimes e_\beta):= \sum_{|\alpha|\geq 1} (B_{(\alpha)}\otimes  W_\alpha^*)(x\otimes e_\beta)
+  \sum_{\alpha\in \FF_n^+} (A_{(\alpha)}\otimes W_\alpha)(x\otimes e_\beta),
$$
is well-defined as a vector in $\cE\otimes F^2(H_n)$. Indeed, the first sum consists of finitely many terms, while the second one is equal to
$\sum_{\alpha\in \FF_n^+} A_{(\alpha)}x\otimes \sqrt{\frac{b_\beta^{(m)}}{b_{\alpha\beta}^{(m)}}}e_{\alpha\beta}$.
Using the definition of the coefficients $b_\alpha^{(m)}$, one can easily see that
$b_\alpha^{(m)}b_\beta^{(m)}\leq \left(\begin{matrix} |\beta|+m-1\\ m-1\end{matrix}\right) b_{\alpha\beta}^{(m)}$. This implies
$$
\sum_{\alpha\in \FF_n^+} \|A_{(\alpha)}x\|^2 \frac{b_\beta^{(m)}}{b_{\alpha\beta}^{(m)}}\leq \left(\begin{matrix} |\beta|+m-1\\ m-1\end{matrix}\right)\sum_{\alpha\in \FF_n^+} \|A_{(\alpha)}x\|^2 \frac{1}{b_{\alpha}^{(m)}}<\infty.
$$
Since $T$ is a weighted right multi-Toeplitz operator, we can use relations \eqref{MT} and  \eqref{f-coef}, to  obtain
\begin{equation}\label{TAB}
\left<T(x\otimes e_\gamma), y\otimes e_\omega\right>
=
\begin{cases}\frac{ \sqrt{b_\gamma^{(m)}}}
{\sqrt{b_\omega^{(m)}}}\left<A_{\omega\backslash_r\gamma}x,y\right>, & \text{if } \ \omega\geq_r\gamma,\\
\frac{ \sqrt{b_\omega^{(m)}}}
{\sqrt{b_\gamma^{(m)}}}\left<B_{\gamma\backslash_r\omega}x,y\right>, & \text{if } \ \gamma >_r\omega,\\
0, & \text{otherwise}.
\end{cases}
\end{equation}
Now, note that
$$
\left<\varphi(W)(x\otimes e_\gamma), y\otimes e_\omega\right>= \sum_{|\alpha|\geq 1} \left<B_{(\alpha)}x,y\right>\left< W_\alpha^* e_\gamma,e_\omega\right>
+  \sum_{\alpha\in \FF_n^+} \left<A_{(\alpha)}x, y\right>\left< W_\alpha e_\gamma, e_\omega\right>.
$$
Due to the definition of the weighted left creation operators $W_1,\ldots, W_n$, we have
$$
\left< W_\alpha e_\gamma, e_\omega\right>=\begin{cases}
\frac{ \sqrt{b_\gamma^{(m)}}}
{\sqrt{b_{\alpha\gamma}^{(m)}}},& if \ \omega=\alpha\gamma,\\
0,& otherwise.
\end{cases}
$$
for any $\alpha\in \FF_n^+$,
and
$$
\left< W_\alpha^* e_\gamma, e_\omega\right>=\begin{cases}
\frac{ \sqrt{b_\omega^{(m)}}}
{\sqrt{b_{\alpha\omega}^{(m)}}},& if \ \gamma=\alpha\omega,\\
0,& otherwise
\end{cases}
$$
for any $\alpha\in \FF_n^+$ with $|\alpha|\geq 1$.
Using these relations, we deduce that
\begin{equation}
\label{fi}
\left<\varphi(W)(x\otimes e_\gamma), y\otimes e_\omega\right>=
\begin{cases}
\left<\frac{ \sqrt{b_\gamma^{(m)}}}
{\sqrt{b_{\alpha\gamma}^{(m)}}}A_{(\alpha)} x,y\right>,& if \ \omega=\alpha\gamma, \alpha\in \FF_n^+,
\\
\left<\frac{ \sqrt{b_\omega^{(m)}}}
{\sqrt{b_{\alpha\omega}^{(m)}}}B_{(\alpha)} x,y\right>,& if \ \gamma=\alpha\omega, \alpha\in \FF_n^+ \ \text{with} \ |\alpha|\geq 1,
\\
0,& otherwise.
\end{cases}
\end{equation}
Comparing these relations with \eqref{TAB}, we conclude that
$$
\left<T(x\otimes e_\gamma), y\otimes e_\omega\right>
=\left<\varphi(W)(x\otimes e_\gamma), y\otimes e_\omega\right>
$$
for any $x,y\in \cE$ and $\gamma, \omega\in \FF_n^+$.
Consequently, we obtain  $T(x\otimes e_\gamma)=\varphi(W)(x\otimes e_\gamma)$. The last part of the theorem is now straightforward. The proof is complete.
\end{proof}

Let $F_{f,m}^2$ be the Hilbert space of formal power series in noncommutative indeterminates $Z_1,\ldots, Z_n$ with complete orthogonal basis $\{Z_\alpha: \ \alpha \in \FF_n^+\}$ and such that $\|Z_\alpha\|_{f,m}:=\frac{1}{\sqrt{b_\alpha^{(m)}}}$.  It is clear that 
$$
F_{f,m}^2=\left\{ \varphi:=\sum_{\alpha\in \FF_n^+} a_\alpha Z_\alpha: \ a_\alpha\in \CC \ \text{\rm and }\  \|\varphi\|_{f,m}^2:=
\sum_{\alpha\in \FF_n^+} \frac{1}{b_\alpha^{(m)}} |a_\alpha|^2<\infty\right\}.
$$
The left multiplication operators $L_1 ,\ldots, L_n $ are defined by
$L_i \xi:=Z_i\xi$ \, for all $\xi\in F^2_{f,m}$.
 Note that the operator 
$U_{f,m}:F^2(H_n)\to F^2_{f,m}$  defined by
$
U_{f,m}(e_\alpha):=\sqrt{b_\alpha^{(m)}} Z_\alpha$, $ \alpha\in \FF_n^+,
$
is unitary and 
   $U_{f,m}W_i^{(m)}=L_i^{(m)} U_{f,m}$  for any $ i\in \{1,\ldots, n\}.
$
A straightforward calculation reveals that $T\in B(\cE\otimes F^2(H_n))$  is  a  weighted right multi-Toeplitz operator if and only if $A:=U_{f,m}TU^*_{f,m}$  satisfies the condition
\begin{equation*} 
\left<A(x\otimes e_\gamma), y\otimes e_\omega\right>
=
\begin{cases}\frac{1}
{\sqrt{b_\omega^{(m)}}}\left<A_{\omega\backslash_r\gamma}x,y\right>, & \text{if } \ \omega\geq_r\gamma,\\
\frac{1}
{\sqrt{b_\gamma^{(m)}}}\left<B_{\gamma\backslash_r\omega}x,y\right>, & \text{if } \ \gamma >_r\omega,\\
0, & \text{otherwise},
\end{cases}
\end{equation*}
for some operators $\{A_{(\alpha)}\}_{\alpha\in \FF_n^+}$ and $\{B_{(\alpha)}\}_{\alpha\in
\FF_n^+\backslash \{g_0\}}$ in $B(\cH)$. Note that the Hilbert space $F_{f,m}^2$ can be seen as a weighted Fock space. In the particular case when $n=1$ and  $q=Z$,  it coincides with the weighted Bergman space $A_m(\DD)$, while $A$ is a  Toeplitz operator with operator-valued bounded harmonic symbol (see  \cite{LO} for the scalar case when $\cE=\CC$). All the results of the present paper can be written in the setting of multi-Toeplitz operators on weighted Fock spaces. However, we preferred this time   to put the weights on the left creation operators instead on the full Fock space.

We denote by $\boldsymbol{\cA}_\cE({\bf D}_f^m)$  the spatial tensor product
$B(\cE)\otimes_{min}\cA({\bf D}_f^m)$, where $\cA({\bf D}_f^m)$ is the noncommutative domain
algebra.  Let $\cP\subset F^2(H_n)$ be the set of all polynomials in $e_\alpha$, $\alpha\in \FF_n^+$.

The main result of this section is the following
characterization of the weighted right multi-Toeplitz operators in terms of their
Fourier representations, which can be viewed as their symbols.

\begin{theorem}\label{Toeplitz}
Let $\{A_{(\alpha)}\}_{\alpha\in \FF_n^+}$ and $\{B_{(\alpha)}\}_{\alpha\in
\FF_n^+\backslash \{g_0\}}$ be two sequences of  operators on a Hilbert space $\cE$.
Then
$$
\varphi(W):=\sum_{|\alpha|\geq 1} B_{(\alpha)}\otimes W_\alpha^*
+ A_{(0)}\otimes  I +\sum_{|\alpha|\geq 1} A_{(\alpha)}\otimes  W_\alpha
$$
is the formal Fourier representation of a weighted right multi-Toeplitz operator $T\in
B(\cE\otimes F^2(H_n))$ if and only if
\begin{enumerate}
\item[(i)]
$\sum_{|\alpha|\geq 1} \frac{1}{b_\alpha^{(m)}}A_{(\alpha)}^* A_{(\alpha)}$ and
$\sum_{|\alpha|\geq 1} \frac{1}{b_\alpha^{(m)}} B_{(\alpha)} B_{(\alpha)}^*$ are WOT
convergent series, and
\item[(ii)]
$\sup\limits_{0\leq r<1} \|\varphi(rW)\|<\infty$.
\end{enumerate}
Moreover, in this case,
\begin{enumerate}
\item[(a)]  for each $r\in [0,1)$, the operator
$$\varphi(rW):=\sum_{k=1}^\infty \sum_{|\alpha|=k}
B_{(\alpha)}\otimes  r^{|\alpha|} W_\alpha^* + A_{(0)}\otimes I
 +\sum_{k=1}^\infty
\sum_{|\alpha|=k} A_{(\alpha)}\otimes r^{|\alpha|} W_\alpha $$ is in the operator space
$\boldsymbol{\cA}_\cE({\bf D}_f^m)^* +\boldsymbol{\cA}_\cE({\bf D}_f^m) $, where the series are convergent in the operator
norm topology;
\item[(b)]
$T=\text{\rm SOT-}\lim\limits_{r\to 1} \varphi(rW)$,
and
\item[(c)]
$ \|T\|=\sup\limits_{0\leq r<1} \|\varphi(rW)\|=\lim\limits_{r\to 1} \|\varphi(rW)\|=\sup\limits_{q\in \cE\otimes\cP, \|q\|\leq 1}
\|\varphi(W)q\|.$
\end{enumerate}
\end{theorem}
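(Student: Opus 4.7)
The plan is to use the scalar gauge action $U_\theta e_\alpha := e^{i|\alpha|\theta} e_\alpha$ on $F^2(H_n)$, which satisfies $U_\theta W_i U_\theta^* = e^{i\theta} W_i$ (since each diagonal $D_i$ commutes with $U_\theta$). For each $r\in[0,1)$, this yields a unital completely positive contraction
\[
\Psi_r(T) := \frac{1}{2\pi}\int_0^{2\pi} P_r(\theta)\,(I_\cE\otimes U_\theta)\,T\,(I_\cE\otimes U_\theta^*)\,d\theta
\]
on $B(\cE\otimes F^2(H_n))$, where $P_r(\theta)=\sum_{k\in\ZZ} r^{|k|}e^{ik\theta}\ge 0$ is the Poisson kernel. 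The linchpin of the whole proof is the matrix-entry identity $\Psi_r(T)=\varphi(rW)$ for every weighted right multi-Toeplitz $T$ with Fourier representation $\varphi(W)$.

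For necessity of (i), the formulas for $T(x\otimes 1)$ and $T^*(x\otimes 1)$ already derived in the proof of Theorem~\ref{Fourier}, combined with $\|T(x\otimes 1)\|\le\|T\|\|x\|$, give the claimed WOT bounds. For necessity of (ii), I will verify $\Psi_r(T)=\varphi(rW)$ by a direct entrywise computation: testing against $x\otimes e_\gamma$ and $y\otimes e_\omega$, the integral collapses to $r^{||\omega|-|\gamma||}\langle T(x\otimes e_\gamma),y\otimes e_\omega\rangle$ via the Fourier identity $\frac{1}{2\pi}\int P_r(\theta)e^{ik\theta}d\theta=r^{|k|}$; then the multi-Toeplitz relations \eqref{MT}, together with \eqref{TAB} and \eqref{fi}, show this equals the corresponding entry of $\varphi(rW)$. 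Hence $\|\varphi(rW)\|\le\|T\|$, and applying the same identity with $T$ replaced by the bounded operator $\varphi(rW)$ yields $\Psi_t(\varphi(rW))=\varphi((tr)W)$, so $s\mapsto\|\varphi(sW)\|$ is non-decreasing on $[0,1)$; this monotonicity is needed for (c).

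For sufficiency, assume (i) and (ii). To establish (a), I fix $r\in[0,1)$ and group the analytic part into homogeneous blocks $P_k:=\sum_{|\alpha|=k} r^{|\alpha|} A_{(\alpha)}\otimes W_\alpha\in\boldsymbol{\cA}_\cE({\bf D}_f^m)$; using \eqref{WbWb} to evaluate $P_k$ on an arbitrary vector, Cauchy--Schwarz against the bound from (i), and the inequality $b_\alpha^{(m)}b_\beta^{(m)}\le\binom{|\beta|+m-1}{m-1}b_{\alpha\beta}^{(m)}$ already used in the proof of Theorem~\ref{Fourier}, one bounds $\|P_k\|$ polynomially in $k$, so geometric summation in $r^k$ gives norm convergence (and symmetrically for the co-analytic part). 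Next, (ii) provides uniform boundedness of $\{\varphi(rW)\}_{r<1}$, and \eqref{fi} shows $\varphi(rW)q\to\varphi(W)q$ in norm for every polynomial vector $q\in\cE\otimes\cP$; the usual $\varepsilon/3$ argument using density of $\cE\otimes\cP$ then upgrades this to SOT convergence to a bounded operator $T$ whose matrix entries coincide with \eqref{TAB} via \eqref{fi}, so $T$ is weighted right multi-Toeplitz with Fourier representation $\varphi(W)$, proving (b). Statement (c) follows by chaining $\sup_{q\in\cE\otimes\cP,\|q\|\le 1}\|\varphi(W)q\|\le\|T\|\le\sup_r\|\varphi(rW)\|$ (density and SOT lower semicontinuity of the norm) with the Poisson bound $\|\varphi(rW)\|\le\|T\|$ and the monotonicity established above.

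\textbf{Main obstacle.} The hardest step is the entrywise verification $\Psi_r(T)=\varphi(rW)$ for a general (non-polynomial) multi-Toeplitz $T$. The integrand $\theta\mapsto(I_\cE\otimes U_\theta)T(I_\cE\otimes U_\theta^*)$ is SOT-continuous and uniformly norm-bounded by $\|T\|$, so $\Psi_r(T)$ is well-defined as a weak operator integral and the inner product with $y\otimes e_\omega$ commutes with the integral, reducing the computation to the Fourier identity $\frac{1}{2\pi}\int P_r(\theta)e^{i(|\omega|-|\gamma|)\theta}d\theta=r^{||\omega|-|\gamma||}$. The combinatorial matching with the entry of $\varphi(rW)$ given by \eqref{fi} then relies on the key fact enforced by \eqref{MT}: non-vanishing entries of $T$ occur only when $\omega$ and $\gamma$ are comparable, and in that case $||\omega|-|\gamma||$ equals $|\omega\backslash_r\gamma|$ or $|\gamma\backslash_r\omega|$, which is precisely the power of $r$ appearing in $\varphi(rW)$.
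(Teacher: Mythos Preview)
Your proof is correct and takes a genuinely different route from the paper's for the central estimate $\|\varphi(rW)\|\le\|T\|$. The paper obtains this via the noncommutative Berezin kernel: it proves the compression identity
\[
(I_\cE\otimes K_{f,rW}^{(m)})^*(T\otimes I_{F^2(H_n)})(I_\cE\otimes K_{f,rW}^{(m)})=\varphi(rW)
\]
by a direct matrix-entry computation (reducing to the polynomial truncation $\varphi_q(W)$ acting on finite sums), and then invokes that $K_{f,rW}^{(m)}$ is an isometry. Monotonicity in $r$ is likewise obtained from a second Berezin compression $\widetilde{\bf B}^{(m)}_{(t_1/t_2)W}[\varphi(t_2W)]=\varphi(t_1W)$. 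Your gauge--Poisson averaging $\Psi_r$ replaces all of this with a single unital completely positive contraction built from the circle action $U_\theta W_iU_\theta^*=e^{i\theta}W_i$; the identity $\Psi_r(T)=\varphi(rW)$ drops out of the Fourier coefficient $\frac{1}{2\pi}\int P_r(\theta)e^{ik\theta}d\theta=r^{|k|}$ together with the key multi-Toeplitz fact that nonzero entries force comparability and hence $||\omega|-|\gamma||=|\omega\backslash_r\gamma|$ (or its reverse). Monotonicity is then immediate from $\Psi_t\circ\Psi_r=\Psi_{tr}$ at the matrix-entry level. Your argument is more elementary and self-contained---it avoids the Berezin machinery entirely and mirrors the classical Poisson summability picture on $H^2(\DD)$---whereas the paper's approach has the advantage of producing the explicit dilation-type formula $\varphi(rW)=\widetilde{\bf B}^{(m)}_{rW}[T]$, which is reused throughout Sections~3 and~4 (e.g.\ the mean-value property, Theorem~\ref{mean-value}, and the characterization of $Har^\infty$, Theorem~\ref{bounded}). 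The remaining parts of your argument---the norm convergence in (a) via the coefficient bound $b_\alpha^{(m)}b_\beta^{(m)}\le\binom{|\beta|+m-1}{m-1}b_{\alpha\beta}^{(m)}$, the SOT limit in (b) via density of polynomials, and the chain of equalities in (c)---match the paper's line for line.
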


\begin{proof} Assume that $T\in B(\cE\otimes F^2(H_n))$ is a weighted right multi-Toeplitz operator and that $\varphi(W)$ is its formal Fourier representation. Note that part (i) was proved in the proof of Theorem \ref{Fourier}.

Using the definition of the weighted left creation operators  and the fact that
$$b_\alpha^{(m)}b_\beta^{(m)}\leq \left(\begin{matrix} |\beta|+m-1\\ m-1\end{matrix}\right) b_{\alpha\beta}^{(m)},
$$
 we deduce that
$$
\|W_\alpha\|\leq \frac{1}{\sqrt{b_\alpha^{(m)}}} \left(\begin{matrix} |\alpha|+m-1\\ m-1\end{matrix}\right)^{1/2}.
$$
Since  the operators $W_\alpha, \alpha\in \FF_n^+, |\alpha|=k$ have orthogonal  ranges, we deduce that
$$
\left\|\sum_{\alpha\in \FF_n^+, |\alpha|=k} b_\alpha^{(m)}W_\alpha W_\alpha^*\right\|\leq \left(\begin{matrix} k+m-1\\ m-1\end{matrix}\right).
$$
Consequently
$$
\left\|\sum_{|\alpha|=k} A_{(\alpha)}\otimes r^{|\alpha|} W_\alpha\right\|=r^k \left\|\sum_{|\alpha|=k} \frac{1}{b_\alpha^{(m)}}A_{(\alpha)}^* A_{(\alpha)}\right\|^{1/2}\left(\begin{matrix} k+m-1\\ m-1\end{matrix}\right)^{1/2}
$$
which implies the convergence of the series
$\sum_{k=0}^\infty\left\|\sum_{|\alpha|=k} A_{(\alpha)}\otimes r^{|\alpha|} W_\alpha\right\|$.
 A similar result holds for the operators $B_{(\alpha)}$. Using part (i), we can easily see that $\varphi(rW)$ is in $\boldsymbol{\cA}_\cE({\bf D}_f^m)^* +\boldsymbol{\cA}_\cE({\bf D}_f^m) $, where the series in the definition of $\varphi(rW)$ are convergent in the operator
norm topology. This shows that part $(a)$ holds.
Now, we prove that, for any $r\in [0,1)$,
\begin{equation}
\label{KTK}
(I_\cE\otimes K_{f,rW}^{(m)})^*(T\otimes I_{F^2(H_n)})(I_\cE\otimes K_{f,rW}^{(m)})=\varphi(rW),
\end{equation}
where
$K_{f,rW}^{(m)}:F^2(H_n)\to F^2(H_n)\otimes \cD_{rW}$ is the noncommutative Berezin kernel defined by
$$
K_{f,rW}^{(m)}\xi=\sum_{\beta\in \FF_n^+}e_\beta\otimes \Delta_{m,rW}^{1/2} W_\beta^*\xi,\qquad \xi\in F^2(H_n),
$$
and $\cD_{rW}:=\overline{\Delta_{m,rW}^{1/2}(F^2(H_n))}$. Let $\gamma, \omega\in \FF_n^+$, set $q:=\max\{|\gamma|, |\omega|\}$ and define
$$
\varphi_q(W):=\sum_{1\leq|\alpha|\leq q} B_{(\alpha)}\otimes W_\alpha^*
+  \sum_{\alpha\in \FF_n^+,|\alpha|\leq q} A_{(\alpha)}\otimes  W_\alpha.
$$
Note that $W_\alpha^*(e_\gamma)=0$ if $|\alpha|>q$ and, similarly, $W_\beta^*(e_\omega)=0$ if $|\beta|>q$. Consequently, using Theorem \ref{Fourier},   careful computation reveals that
\begin{equation*}
\begin{split}
&\left<(I_\cE\otimes K_{f,rW}^{(m)})^*(T\otimes I_{F^2(H_n)})(I_\cE\otimes K_{f,rW}^{(m)})(x\otimes e_\gamma), y\otimes e_\omega\right>\\
&\qquad=\left<(T\otimes I_{F^2(H_n)})(x\otimes K_{f,rW}^{(m)}e_\gamma), y\otimes K_{f,rW}^{(m)}e_\omega\right>\\
&\qquad=
\left<(T\otimes I_{F^2(H_n)})\left( \sum_{\alpha\in \FF_n^+} x\otimes e_\alpha\otimes \Delta_{m,rW}^{1/2} W_\alpha^*(e_\gamma)\right), \sum_{\beta\in \FF_n^+}y\otimes e_\beta\otimes \Delta_{m,rW}^{1/2} W_\beta^* (e_\omega)\right>\\
&\qquad=
\left< \sum_{\alpha\in \FF_n^+} T( x\otimes e_\alpha)\otimes \Delta_{m,rW}^{1/2} W_\alpha^*(e_\gamma), \sum_{\beta\in \FF_n^+}y\otimes e_\beta\otimes \Delta_{m,rW}^{1/2} W_\beta^* (e_\omega)\right>\\
&\qquad=\sum_{\alpha\in \FF_n^+}\sum_{\beta\in \FF_n^+}\left<T(x\otimes e_\alpha), y\otimes e_\beta\right>\left<\Delta_{m,rW}^{1/2} W_\alpha^*(e_\gamma), \Delta_{m,rW}^{1/2} W_\beta^* (e_\omega)\right>\\
&\qquad=\sum_{\alpha\in \FF_n^+, |\alpha|\leq q}\sum_{\beta\in \FF_n^+, |\beta|\leq q}\left<T(x\otimes e_\alpha), y\otimes e_\beta\right>\left<\Delta_{m,rW}^{1/2} W_\alpha^*(e_\gamma), \Delta_{m,rW}^{1/2} W_\beta^* (e_\omega)\right>\\
&\qquad=\sum_{\alpha\in \FF_n^+, |\alpha|\leq q}\sum_{\beta\in \FF_n^+, |\beta|\leq q}\left<\varphi_q(W)(x\otimes e_\alpha), y\otimes e_\beta\right>\left<\Delta_{m,rW}^{1/2} W_\alpha^*(e_\gamma), \Delta_{m,rW}^{1/2} W_\beta^* (e_\omega)\right>\\
&\qquad=\sum_{\alpha\in \FF_n^+ }\sum_{\beta\in \FF_n^+ }\left<\varphi_q(W)(x\otimes e_\alpha), y\otimes e_\beta\right>\left<\Delta_{m,rW}^{1/2} W_\alpha^*(e_\gamma), \Delta_{m,rW}^{1/2} W_\beta^* (e_\omega)\right>\\
&\qquad=\left<(\varphi_q(W)\otimes I_{F^2(H_n)})(x\otimes K_{f,rW}^{(m)}e_\gamma), y\otimes K_{f,rW}^{(m)}e_\omega\right>\\
&\qquad =\left<(I_\cE\otimes K_{f,rW}^{(m)})^*(\varphi_q(W)\otimes I_{F^2(H_n)})(I_\cE\otimes K_{f,rW}^{(m)})(x\otimes e_\gamma), y\otimes e_\omega\right>\\
&\qquad =\left<\varphi_q(rW)(x\otimes e_\gamma), y\otimes e_\omega\right>\\
&\qquad =\left<\varphi(rW)(x\otimes e_\gamma), y\otimes e_\omega\right>
\end{split}
\end{equation*}
for any $x,y\in \cE$. This shows that relation \eqref{KTK} holds.
Since $K_{f,rW}^{(m)}$ is an isometry for any $r\in [0,1)$, we deduce that
\begin{equation}
\label{in-vN}
\|\varphi(rW)\|\leq \|T\|, \qquad r\in [0,1),
\end{equation}
which completes the proof of part (ii).
Now, we show that $T=\text{\rm SOT-}\lim\limits_{r\to 1} \varphi(rW)$.
Indeed, first note that, due to part (i) and the proof of Theorem \ref{Fourier}, we deduce that
$$
 \left\|\sum_{\alpha\in \FF_n^+} (A_{(\alpha)}\otimes W_\alpha)(x\otimes e_\beta)\right\|^2
 \leq \left(\begin{matrix} |\beta|+m-1\\ m-1\end{matrix}\right)\sum_{\alpha\in \FF_n^+} \|A_{(\alpha)}x\|^2 \frac{1}{b_{\alpha}^{(m)}}<\infty
 $$
 and also a similar result involving the other series in the definition of $\varphi(W)$.
Consequently,
\begin{equation}\label{fi-conv}
\|\varphi(rW)p-\varphi(W)p\|\to 0, \quad \text{ as } r\to 1,
\end{equation}
for any   $p:=\sum_{|\alpha|\leq k} h_{(\alpha)}\otimes e_\alpha$, where  $ h_{(\alpha)}\in \cE$ and $k\in \NN$. Let $x\in \cE\otimes F^2(H_n)$ and choose $p$ as above such that $\|x-p\|\leq \frac{\epsilon}{2\|T\|}$.
Using relation \eqref{in-vN} and Theorem \ref{Fourier}, we obtain
\begin{equation*}
\begin{split}
\|\varphi(rW)x-Tx\|&\leq \|\varphi(rW)(x-p)\|+\|\varphi(rW)p-\varphi(W)p\| +\|\varphi(W)p-Tx\|\\
&\leq 2\|T\|\|x-p\|+\|\varphi(rW)p-\varphi(W)p\|\\
&\leq \epsilon + \|\varphi(rW)p-\varphi(W)p\|.
\end{split}
\end{equation*}
Now, relation \eqref{fi-conv} implies
$\limsup_{r\to 1}\|\varphi(rW)x-Tx\|\leq \epsilon$ for any $\epsilon>0$. Hence $\lim_{r\to 1}\|\varphi(rW)x-Tx\|=0$ for any $x\in \cE\otimes F^2(H_n)$, which proves part (b).
\
To prove part (c), let $\epsilon>0$ and choose $p\in \cE\otimes F^2(H_n)$ be a polynomial such that $\|p\|=1$ and $\|Tp\|>\|T\|-\epsilon$.
Theorem \ref{Fourier} and relation \eqref{fi-conv} imply that there is $t\in (0,1)$ such that
$\|\varphi(tW)p\|>\|T\|-\epsilon$. This shows that
$\sup_{r\in [0,1)} \|\varphi(rW)\|\geq \|T\|$. Since the reverse inequality holds due to  relation \eqref{in-vN}, we deduce that
\begin{equation}\label{Tsup}
\sup_{r\in [0,1)} \|\varphi(rW)\|= \|T\|.
\end{equation}
Now, let $t_1,t_2\in [0,1)$ such that $t_1<t_2$. Since $ \varphi(t_2W) $ is in $\boldsymbol{\cA}_\cE({\bf D}_f^m)$ we can use the noncommutative Berezin transform to deduce that
$$
(I_\cE\otimes K_{f,rW}^{(m)})^*\left(\varphi(t_2W)\otimes I_{F^2(H_n)}\right)(I_\cE\otimes K_{f,rW}^{(m)})= \varphi(t_2rW)
$$
for any $r\in [0,1)$. Taking $r:=\frac{t_1}{t_2}$ and employing the fact that
$K_{f,rW}^{(m)}$ is an isometry, we obtain
$$
\|\varphi(t_1W)\|\leq \|\varphi(t_2W)\|,
$$
which together with  relation \eqref{Tsup} show that $\|T\|=\lim_{r\to 1}\|\varphi(rW)$. On the other hand, the fact that $\|T\|=\sup\limits_{q\in \cE\otimes\cP, \|q\|\leq 1}
\|\varphi(W)q\|$ is a consequence of Theorem \ref{Fourier}.

It remains to prove the converse of the theorem. Assume that
$\{A_{(\alpha)}\}_{\alpha\in \FF_n^+}$ and $\{B_{(\alpha)}\}_{\alpha\in
\FF_n^+\backslash \{g_0\}}$ are  two sequences of  operators on a Hilbert space $\cE$ satisfying conditions (i) and (ii), where $\varphi(W)$ is the formal series
$$
\sum_{|\alpha|\geq 1}B_{(\alpha)} \otimes W_\alpha^* +\sum_{\alpha\in \FF_n^+} A_{(\alpha)}\otimes W_\alpha.
$$
As is the first part of the proof, we can show that $\varphi(rW)$
 is in the operator space
$\boldsymbol{\cA}_\cE({\bf D}_f^m)^* +\boldsymbol{\cA}_\cE({\bf D}_f^m) $, and
$\varphi(W)p$ makes sense for any polynomial in $\cE\otimes F^2(H_n)$.
Note that item (ii) implies
\begin{equation} \label{sup-finite}
\sup\limits_{q\in \cE\otimes\cP, \|q\|\leq 1}
\|\varphi(W)q\|<\infty.
\end{equation}
Indeed, if $M>0$ and there is a polynomial  $p_0\in \cE\otimes \cP$ such that $\|\varphi(W)p_0\|>M$. Using the fact that $\|\varphi(rW)p_0-\varphi(W)p_0\|\to 0$ as $r\to 1$, we find $t\in (0,1)$ such that $\|\varphi(tW)p_0\|>M$, which implies that $\|\varphi(W)\|>M$. Since $M$ is arbitrary, we get a contradiction. Therefore, relation \eqref{sup-finite} holds.
Consequently, there is a unique operator $T\in B(\cE\otimes F^2(H_n))$ such that $Tp=\varphi(W)p$ for any polynomial $p\in \cE\otimes F^2(H_n)$. Now one can easily see that relation \eqref{fi}  holds and
\begin{equation*}
\begin{split}
\left<T(x\otimes e_\gamma), y\otimes e_\omega\right>&
=\left<\varphi(W)(x\otimes e_\alpha), y\otimes e_\omega\right>\\
&=\begin{cases}
\left<\frac{ \sqrt{b_\gamma^{(m)}}}
{\sqrt{b_{\alpha\gamma}^{(m)}}}A_{(\alpha)} x,y\right>,& if \ \omega=\alpha\gamma,
\\
\left<\frac{ \sqrt{b_\omega^{(m)}}}
{\sqrt{b_{\alpha\gamma}^{(m)}}}B_{(\alpha)} x,y\right>,& if \ \gamma=\alpha\omega,
\\
0,& otherwise.
\end{cases}\\
&=\begin{cases}\frac{\sqrt{b_{\omega\backslash_r\gamma}^{(m)}}\sqrt{b_\gamma^{(m)}}}
{\sqrt{b_\omega^{(m)}}}\left<T(x\otimes 1),y\otimes e_{\omega\backslash_r\gamma}\right>, & \text{if } \ \omega\geq_r\gamma,\\
\frac{\sqrt{b_{\gamma\backslash_r\omega}^{(m)}}\sqrt{b_\omega^{(m)}}}
{\sqrt{b_\gamma^{(m)}}}\left<T(x\otimes e_{\gamma\backslash_r\omega}),y\otimes 1\right>, & \text{if } \ \gamma >_r\omega,\\
0, & \text{otherwise}.
\end{cases}
\end{split}
\end{equation*}
This shows that $T$ is a weighted right multi-Toeplitz operator and completes the proof.
\end{proof}

\begin{corollary}\label{multi}
The set of all weighted right multi-Toeplitz operators  on $\cE\otimes F^2(H_n)$
coincides with
$$
 \overline{\boldsymbol{\cA}_\cE({\bf D}_f^m)^* +\boldsymbol{\cA}_\cE({\bf D}_f^m)}^{WOT}= \overline{\boldsymbol{\cA}_\cE({\bf D}_f^m)^* +\boldsymbol{\cA}_\cE({\bf D}_f^m)}^{SOT},
 $$
 where $\boldsymbol{\cA}_\cE({\bf D}_f^m):= B(\cE)\otimes_{min}\cA({\bf D}_f^m)$ and  $\cA({\bf D}_f^m)$ is the noncommutative domain
algebra.
\end{corollary}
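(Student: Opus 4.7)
The plan is to prove this corollary by combining the two halves of Theorem \ref{Toeplitz} with the easy observation that the weighted right multi-Toeplitz property is preserved by the WOT topology. Since the inclusion $\overline{\cdots}^{\text{SOT}}\subseteq \overline{\cdots}^{\text{WOT}}$ is automatic, one obtains equality of the two closures for free once both are shown to coincide with the collection of weighted right multi-Toeplitz operators.

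First I would prove the forward inclusion: every weighted right multi-Toeplitz operator $T\in B(\cE\otimes F^2(H_n))$ lies in $\overline{\boldsymbol{\cA}_\cE({\bf D}_f^m)^*+\boldsymbol{\cA}_\cE({\bf D}_f^m)}^{\text{SOT}}$. This is immediate from Theorem \ref{Toeplitz}: part (a) provides, for each $r\in[0,1)$, an element $\varphi(rW)\in \boldsymbol{\cA}_\cE({\bf D}_f^m)^*+\boldsymbol{\cA}_\cE({\bf D}_f^m)$ associated to the Fourier representation of $T$, while part (b) identifies $T$ as the SOT-limit of the net $\{\varphi(rW)\}_{r\to 1}$.

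For the reverse inclusion I would argue in two steps. \emph{Step 1:} The generators of $\boldsymbol{\cA}_\cE({\bf D}_f^m)^*+\boldsymbol{\cA}_\cE({\bf D}_f^m)$ of the form $A\otimes W_\alpha$ and $B\otimes W_\beta^*$ (with $A,B\in B(\cE)$ and $\alpha,\beta\in\FF_n^+$) satisfy the defining Toeplitz relations of Definition \ref{def-Toeplitz}; this is a direct calculation using \eqref{WbWb} and the identity $\lambda_{\omega g_i,\gamma g_i}\tfrac{\sqrt{b_{\gamma g_i}^{(m)}}}{\sqrt{b_{\omega g_i}^{(m)}}}=\lambda_{\omega,\gamma}\tfrac{\sqrt{b_{\gamma}^{(m)}}}{\sqrt{b_{\omega}^{(m)}}}$, together with the fact that $W_\alpha^*$-type entries are supported on the comparable pairs of the required form. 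By linearity and norm-continuity of the matrix-entry functionals $T\mapsto \langle T(x\otimes e_\gamma),y\otimes e_\omega\rangle$, every element of $\boldsymbol{\cA}_\cE({\bf D}_f^m)^*+\boldsymbol{\cA}_\cE({\bf D}_f^m)$ is a weighted right multi-Toeplitz operator. \emph{Step 2:} The weighted right multi-Toeplitz condition is a collection of linear equations between matrix entries of $T$, and each such entry is a WOT-continuous linear functional; therefore the set of weighted right multi-Toeplitz operators on $\cE\otimes F^2(H_n)$ is WOT-closed. Combining Steps 1 and 2 shows $\overline{\boldsymbol{\cA}_\cE({\bf D}_f^m)^*+\boldsymbol{\cA}_\cE({\bf D}_f^m)}^{\text{WOT}}$ consists of weighted right multi-Toeplitz operators.

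Putting it all together we have the chain
\[
\{\text{weighted right multi-Toeplitz}\}\subseteq \overline{\boldsymbol{\cA}_\cE({\bf D}_f^m)^*+\boldsymbol{\cA}_\cE({\bf D}_f^m)}^{\text{SOT}}\subseteq \overline{\boldsymbol{\cA}_\cE({\bf D}_f^m)^*+\boldsymbol{\cA}_\cE({\bf D}_f^m)}^{\text{WOT}}\subseteq \{\text{weighted right multi-Toeplitz}\},
\]
which forces equality throughout. There is no real obstacle here: the only point requiring care is Step 1, where one must verify the generators satisfy the Toeplitz conditions and then pass to norm limits within $\boldsymbol{\cA}_\cE({\bf D}_f^m)^*+\boldsymbol{\cA}_\cE({\bf D}_f^m)$; but since the weighted compatibility constraint was engineered precisely so that $W_\alpha$ and $W_\alpha^*$ satisfy it, this is essentially bookkeeping built on the definitions and on \eqref{WbWb}.
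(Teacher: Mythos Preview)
Your proposal is correct and follows essentially the same approach as the paper: both use Theorem \ref{Toeplitz} (parts (a) and (b)) for the inclusion of multi-Toeplitz operators into the SOT-closure, then verify that the generators $A\otimes W_\alpha$ and $A\otimes W_\alpha^*$ are multi-Toeplitz and that the multi-Toeplitz condition is WOT-closed (the paper phrases this as passing to the limit in relation \eqref{MT}), yielding the chain of inclusions you wrote. Your Step 1 is slightly more explicit in passing through norm limits first, but this is a cosmetic difference.
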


\begin{proof}
Let $\cM_T$ be the set of all weighted right multi-Toeplitz  operators and note that the inclusion
$\cM_T\subseteq \overline{\boldsymbol{\cA}_\cE({\bf D}_f^m)^* +\boldsymbol{\cA}_\cE({\bf D}_f^m)}^{SOT}$ holds due to Theorem \ref{Toeplitz}. Since
$$
 \overline{\boldsymbol{\cA}_\cE({\bf D}_f^m)^* +\boldsymbol{\cA}_\cE({\bf D}_f^m)}^{SOT}\subseteq\overline{\boldsymbol{\cA}_\cE({\bf D}_f^m)^* +\boldsymbol{\cA}_\cE({\bf D}_f^m)}^{WOT},
 $$
 it remains to show that
 $$\overline{\boldsymbol{\cA}_\cE({\bf D}_f^m)^* +\boldsymbol{\cA}_\cE({\bf D}_f^m)}^{WOT}\subseteq \cM_T.
 $$
To this end, note that, for any operator $A\in B(\cE)$ and $\alpha\in \FF_n^+$, the operators  $A\otimes W_\alpha^*$ and  $A\otimes W_\alpha$ are multi-Toeplitz. On the other hand, if $\{T_i\}$ is a net of weighted right multi-Toeplitz operators such that $T_i\to T$ in the weak operator topology, passing to the limit in relation \eqref{MT}, written for $T_i$,  shows  that $T$ is a weighted right multi-Toeplitz operator  as  well. This completes the proof.
\end{proof}

Next, we show that for certain noncommutative domains ${\bf D}_f^m$, the corresponding set of weighted right multi-Toeplitz operators  does not contain any nonzero compact operator.
\begin{theorem} \label{compact} Let    ${\bf D}_f^m$ be a noncommutative domain where the coefficients $b_\alpha^{(m)}$ associated to $f$ satisfy the condition
$$
\sup_{\alpha\in \FF_n^+} \frac{b^{(m)}_{g_i\alpha }}{b^{(m)}_\alpha}<\infty,\qquad i\in \{1,\ldots, n\}.
$$
 Then there is no nonzero compact weighted right multi-Toeplitz operator  on the full Fock space $F^2(H_n)$.
\end{theorem}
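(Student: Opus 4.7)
The plan is to show that compactness of a weighted right multi-Toeplitz operator $T$ on $F^2(H_n)$ forces every coefficient in its Fourier representation to vanish, so by the uniqueness part of Theorem \ref{Fourier} we must have $T=0$.

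By Theorem \ref{Fourier} applied with $\cE=\CC$, any such $T$ has a formal Fourier representation $\varphi(W)=\sum_{|\alpha|\geq 1}b_\alpha W_\alpha^*+a_{g_0}I+\sum_{|\alpha|\geq 1}a_\alpha W_\alpha$, and the matrix entries are
$$
\left<Te_\gamma,e_\omega\right>=\begin{cases}\dfrac{\sqrt{b_\gamma^{(m)}}}{\sqrt{b_\omega^{(m)}}}\,a_{\omega\backslash_r\gamma}, & \omega\geq_r\gamma,\\[4pt]
\dfrac{\sqrt{b_\omega^{(m)}}}{\sqrt{b_\gamma^{(m)}}}\,b_{\gamma\backslash_r\omega}, & \gamma>_r\omega,\\[2pt]
0, & \text{otherwise}.\end{cases}
$$
Since $\{e_\gamma\}_{\gamma\in\FF_n^+}$ is an orthonormal basis of $F^2(H_n)$, it converges weakly to $0$ as $|\gamma|\to\infty$; compactness of $T$ then gives $\|Te_\gamma\|\to 0$, and in particular
$$
\left|\left<Te_\gamma,e_{\sigma\gamma}\right>\right|=\dfrac{\sqrt{b_\gamma^{(m)}}}{\sqrt{b_{\sigma\gamma}^{(m)}}}|a_\sigma|\longrightarrow 0\qquad\text{as }|\gamma|\to\infty,
$$
for every fixed $\sigma\in\FF_n^+$. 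The symmetric statement with $\gamma=\sigma\omega$ and $|\omega|\to\infty$ applies to the $b_\sigma$.

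Next I invoke the hypothesis. Setting $C:=\max_{1\leq i\leq n}\sup_\alpha b_{g_i\alpha}^{(m)}/b_\alpha^{(m)}<\infty$ and writing $\sigma=g_{i_1}\cdots g_{i_k}$, a telescoping product yields the uniform bound
$$
\dfrac{b_{\sigma\gamma}^{(m)}}{b_\gamma^{(m)}}=\prod_{j=1}^{k}\dfrac{b_{g_{i_j}\cdots g_{i_k}\gamma}^{(m)}}{b_{g_{i_{j+1}}\cdots g_{i_k}\gamma}^{(m)}}\leq C^{|\sigma|},
$$
valid for every $\gamma\in\FF_n^+$. Consequently $\sqrt{b_\gamma^{(m)}}/\sqrt{b_{\sigma\gamma}^{(m)}}\geq C^{-|\sigma|/2}$ uniformly in $\gamma$, and combining this with the convergence to $0$ above forces $|a_\sigma|\leq C^{|\sigma|/2}\left|\left<Te_\gamma,e_{\sigma\gamma}\right>\right|\to 0$, i.e.\ $a_\sigma=0$. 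The identical argument with the roles of rows and columns interchanged gives $b_\sigma=0$ for every $\sigma\in\FF_n^+\setminus\{g_0\}$.

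Thus every coefficient in the Fourier representation of $T$ vanishes, so the uniqueness statement in Theorem \ref{Fourier} yields $T=0$. The only delicate point is the uniform lower bound $\sqrt{b_\gamma^{(m)}}/\sqrt{b_{\sigma\gamma}^{(m)}}\geq C^{-|\sigma|/2}$ for fixed $\sigma$ as $|\gamma|\to\infty$, which is precisely what the hypothesis on the weights $b_\alpha^{(m)}$ supplies; the rest is a direct exploitation of the standard fact that matrix entries of a compact operator along an orthonormal basis tend to zero.
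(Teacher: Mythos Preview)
Your proof is correct and follows essentially the same route as the paper's: both arguments use the telescoping product to upgrade the hypothesis to $\sup_\gamma b_{\sigma\gamma}^{(m)}/b_\gamma^{(m)}<\infty$ for each fixed $\sigma$, then exploit the fact that $e_\gamma\to 0$ weakly as $|\gamma|\to\infty$ and compactness to force the diagonal blocks $\left<Te_\gamma,e_{\sigma\gamma}\right>$ and $\left<Te_{\sigma\gamma},e_\gamma\right>$ to zero, whence all Fourier coefficients (equivalently, all matrix entries) vanish. The only cosmetic difference is that the paper phrases everything directly in terms of the matrix entries and the multi-Toeplitz relations, while you pass through the Fourier coefficients $a_\sigma,b_\sigma$ from Theorem~\ref{Fourier}; these are equivalent descriptions of the same data.
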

\begin{proof}
First, note that
\begin{equation}
\label{sup}
\sup_{\alpha\in \FF_n^+} \frac{b^{(m)}_{\sigma\alpha }}{b^{(m)}_\alpha}<\infty\quad \text{ for each } \quad \sigma\in \FF_n^+.
\end{equation}
Indeed,
 if $\sigma=g_{i_1}\cdots g_{i_q}$, $i_1,\ldots, i_q\in \{1,\ldots,n\}$, then
$$
\frac{b_{\sigma \gamma}}{b_\gamma}=\frac{b_{g_{i_1}\cdots g_{i_q}\gamma}}{b_{g_{i_2}\cdots g_{i_q}\gamma}}
 =\frac{b_{g_{i_2}\cdots g_{i_q}\gamma}}{b_{g_{i_3}\cdots g_{i_q}\gamma}}\cdots \frac{b_{g_{i_q}}\gamma}{b_\gamma}.
$$
Using now the condition in the theorem, the assertion follows. Assume that $T$ is a compact  weighted right multi-Toeplitz operator on $F^2(H_n)$.
Then, we have
\begin{equation}
\label{Toep}
\begin{split}
\left<T e_\gamma, e_{\sigma \gamma}\right>&=\sqrt{\frac{b_\sigma^{(m)} b_\gamma^{(m)}}{b_{\sigma\gamma}^{(m)}}}\left<T e_{g_0}, e_\sigma\right>\\
\left<Te_{\sigma \gamma}, e_\gamma\right>&=\sqrt{\frac{b_\sigma^{(m)} b_\gamma^{(m)}}{b_{\sigma\gamma}^{(m)}}}\left<T  e_\sigma, e_{g_0}\right>
\end{split}
\end{equation}
for any $\sigma\gamma\in \FF_n^+$, and $\left<T e_\alpha, e_\beta\right>=0$  if $(\alpha, \beta) \in \FF_n^+\times \FF_n^+$ is not of the form  $(\sigma\gamma, \gamma)$ or $(\gamma, \sigma\gamma)$ for   $\sigma, \gamma\in \FF_n^+$.
Since a compact operator maps weakly convergent sequences to norm convergent sequences, we  deduce that
$\left<T e_\gamma, e_{\sigma \gamma}\right>\to 0$ and  $\left<Te_{\sigma \gamma}, e_\gamma\right> \to 0$ as $|\gamma|\to \infty$. Consequently, using relations \eqref{sup} and \eqref{Toep}, we conclude that
$\left<T e_\gamma, e_{\sigma \gamma}\right>=\left<Te_{\sigma \gamma}, e_\gamma\right>=0$ for any $\sigma\in \FF_n^+$. Now, using again relation \eqref{Toep}, we deduce that $T=0$, which completes the proof.
\end{proof}

    We  shall present a concrete class of noncommutative domains for which  the theorem above  holds. Consider  the  case when $f=Z_1+\cdots +Z_n$ and $m\in \NN$.   The corresponding domain ${\bf D}_f^m$ is  the {\it noncommutative $m$-hyperball}, which is defined by
$$
\left\{ X:=(X_1,\ldots, X_n)\in B(\cH)^n: \ (id-\Phi_{X})^k(I)\geq 0 \  \text{\rm  for } 1\leq k\leq m\right\},
$$
where $\Phi_{X}: B(\cH)\to B(\cH)$ is defined by $\Phi_{X}(Y):=\sum_{i=1}^n X_i Y X_i^*$ for  $Y\in B(\cH)$.
 In this case, we have    $b_{g_0}^{(m)} =1$ and
$
 b_\alpha^{(m)} =
\left(\begin{matrix} |\alpha|+m-1\\m-1
\end{matrix}\right)$
if   \ $ \alpha\in \FF_n^+,  |\alpha|\geq 1.
$
 Consequently,
 $$
 \frac{b^{(m)}_{\alpha g_i}}{b^{(m)}_\alpha}
 =\frac{\left(\begin{matrix} |\alpha|+m\\m-1
\end{matrix}\right)}{\left(\begin{matrix} |\alpha|+m-1\\m-1
\end{matrix}\right)}\to 1,\quad \text{ as } \ |\alpha|\to \infty.
 $$
This shows that Theorem \ref{compact} holds for the  weighted right multi-Toeplitz operators  associated with the noncommutative $m$-hyperball.

\bigskip

\section{Free pluriharmonic functions on the noncommutative domain ${\bf D}_{f,rad}^m$}

In this section, we   provide basic results concerning the free pluriharmonic functions on the  noncommutative domain ${\bf D}_{f, rad}^m(\cH)$ and show that they are characterized by a mean value property. This result is used to obtain an analogue of Weierstrass theorem for free pluriharmonic functions and to show that the set   of all pluriharmonic functions is a complete metric space with respect to an appropriate metric.  A Schur type result in this setting  is also presented.

Since the domain  ${\bf D}_f^m$ is radial (see \cite{Po-Berezin1}),  i.e. $r{X}\in {\bf D}_f^m(\cH)$ for any ${X}\in {\bf D}_f^m(\cH)$ and any $r\in [0,1)$,
 we can introduce   the radial part of the domain ${\bf D}_f^m(\cH)$, i.e.
$$
{\bf D}_{f,\text{\rm rad}}^m(\cH):= \bigcup_{0\leq t<1} t{\bf D}_{f}^m(\cH)\subseteq {\bf D}_{f}^m(\cH).
$$
Note that, in general, we have
$$
\text{\rm Int}{\bf D}_f^m(\cH)\subseteq {\bf D}_{f,\text{\rm rad}}^m(\cH)\subseteq {\bf D}_f^m(\cH)\subseteq {\bf D}_{f,\text{\rm rad}}^m(\cH)^-.
$$
In the particular case when $q$ is a positive regular noncommutative polynomial, we have
$$
\text{\rm Int}{\bf D}_q^m(\cH)={\bf D}_{q,\text{\rm rad}}^m(\cH) \quad \text{\rm and }\quad  {\bf D}_{q,\text{\rm rad}}^m(\cH)^-={\bf D}_q^m(\cH)={\bf D}_q^m(\cH)^-.
$$

Let $Z_1,\ldots, Z_n$ be noncommutative  indeterminates, set $Z_\alpha:=Z_{i_1}\cdots Z_{i_k}$ if $\alpha=g_{i_1}\cdots g_{i_k}\in \FF_n^+$, and $Z_{g_0}:=1$.

 \begin{definition} \label{def-free-hol} A formal power series  $F:=\sum\limits_{\alpha\in \FF_{n}} A_{(\alpha)}\otimes {Z}_{\alpha}$   with $A_{(\alpha)}\in B(\cE)$
 is called  free holomorphic function on the
  abstract   domain
${\bf D}_{f,\text{\rm rad}}^m:=
\coprod_{\cH}{\bf D}_{f,\text{\rm rad}}^m(\cH)$,  if the series

$$
F({X} ):=\sum\limits_{k=1}^\infty\sum\limits_{\alpha\in  \FF_n^+, |\alpha|=k} A_{(\alpha)}\otimes  {X}_{\alpha}
$$
is convergent in the operator norm topology for any $ {X} \in {\bf D}_{f,\text{\rm rad}}^m(\cH)$    and any Hilbert space $\cH$.
\end{definition}
We remark that it is enough to assume in Definition \ref{def-free-hol} that $\cH$ is an arbitrary infinite dimensional separable Hilbert space. Unless otherwise specified, we assume throughout this paper that $\cH$ has this property.

We denote by $Hol_\cE({\bf D}_f^m)$ the set of all free holomorphic functions on the
  abstract   domain
${\bf D}_{f,\text{\rm rad}}^m$ with operator coefficients in $B(\cE)$.
Let $F:=\sum\limits_{\alpha\in \FF_{n}} A_{(\alpha)}\otimes {Z}_{\alpha}$   with $A_{(\alpha)}\in B(\cE)$  be a formal power series
 and define
 $\gamma\in[0,\infty]$ by setting
 $$
 \frac{1}{\gamma}:=\limsup_{k\in \ZZ_+} \left\|\sum\limits_{\alpha\in  \FF_n^+, |\alpha|=k}A_{(\alpha)}\otimes { W}_{\alpha}\right\|^{\frac{1}{k}}.
 $$
 According to \cite{Po-Bohr2}, the series
 $$
 \sum\limits_{k=1}^\infty\left\|\sum\limits_{|\alpha|=k}A_{(\alpha)}\otimes { X}_{\alpha}\right\|, \qquad {X}\in   \gamma{\bf D}_{f,\text{\rm rad}}^m(\cH),
 $$
 is convergent. Moreover, if $\gamma>0$ and $r\in [0,\gamma)$, then the convergence is uniform on $r{\bf D}_f^m(\cH)$.
 In addition, if $\gamma\in [0,\infty)$ and
   $s>\gamma$, then there is a Hilbert space $\cH$ and ${Y}\in s{\bf D}_f^m(\cH)$  such that the series
 $$
 \sum\limits_{k=1}^\infty\sum\limits_{|\alpha|=k}A_{(\alpha)}\otimes { Y}_{\alpha}
 $$
 is divergent in the operator norm topology. As a consequence of these results, we deduce the following.
 \begin{proposition} \label{cha-hol}
 $F=\sum\limits_{\alpha\in \FF_{n}} A_{(\alpha)}\otimes {Z}_{\alpha}$  is free holomorphic on ${\bf D}_{f,\text{\rm rad}}^m$ if and only if
  $$
  \limsup_{k\in \ZZ_+} \left\|\sum\limits_{\beta\in  \FF_n^+, |\beta|=k}\omega_\beta A_{(\beta)}^*A_{(\beta)}\right\|^{1/2k}\leq 1,
  $$
  where $\omega_\beta:=\sup_{\gamma\in \FF_n^+} \frac{b_\gamma^{(m)}}{b_{\beta \gamma}^{(m)}}$ and $b_\gamma^{(m)}$ is given by relation \eqref{b-al}.
 \end{proposition}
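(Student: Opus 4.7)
The plan is to derive the proposition from the statement on the radius of convergence $\gamma$ recalled above from \cite{Po-Bohr2}. Since $F$ is free holomorphic on ${\bf D}_{f,\text{\rm rad}}^m$ if and only if $\gamma\geq 1$, it suffices to compare
$$
\limsup_{k\to\infty}\left\|\sum_{|\alpha|=k} A_{(\alpha)}\otimes W_\alpha\right\|^{1/k}
\quad\text{and}\quad
\limsup_{k\to\infty}\left\|\sum_{|\beta|=k}\omega_\beta A_{(\beta)}^* A_{(\beta)}\right\|^{1/(2k)}
$$
to the value $1$.

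The main step is to compute $\|P_k\|^2$, where $P_k:=\sum_{|\alpha|=k}A_{(\alpha)}\otimes W_\alpha$. By unique factorisation in $\FF_n^+$ together with formula \eqref{WbWb}, one has $W_\alpha^* W_\beta=0$ whenever $\alpha\neq\beta$ and $|\alpha|=|\beta|$; the cross terms therefore cancel in $P_k^*P_k$ and
$$
P_k^*P_k \;=\; \sum_{|\alpha|=k} A_{(\alpha)}^* A_{(\alpha)} \otimes W_\alpha^* W_\alpha .
$$
Each $W_\alpha^*W_\alpha$ is the positive diagonal operator $W_\alpha^*W_\alpha e_\sigma=(b_\sigma^{(m)}/b_{\alpha\sigma}^{(m)})e_\sigma$ of norm $\omega_\alpha$, and the family $\{W_\alpha^*W_\alpha\}_\alpha$ is simultaneously diagonalised by $\{e_\sigma\}_{\sigma\in\FF_n^+}$. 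The decomposition $\cE\otimes F^2(H_n)=\bigoplus_{\sigma}(\cE\otimes\CC e_\sigma)$ then reduces $P_k^*P_k$ and yields
\begin{equation*}
\|P_k\|^2 \;=\; \sup_{\sigma\in\FF_n^+}\left\|\sum_{|\alpha|=k}\frac{b_\sigma^{(m)}}{b_{\alpha\sigma}^{(m)}}A_{(\alpha)}^* A_{(\alpha)}\right\|. \qquad (\ast)
\end{equation*}
Since $b_\sigma^{(m)}/b_{\alpha\sigma}^{(m)}\leq \omega_\alpha$ for every $\sigma$, $(\ast)$ immediately gives $\|P_k\|^2\leq\|\sum_\alpha\omega_\alpha A_{(\alpha)}^* A_{(\alpha)}\|$, which already proves the \emph{if} direction: the $\omega$-condition forces $\limsup_k\|P_k\|^{1/k}\leq 1$ and hence $F$ is free holomorphic.

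For the converse I would upgrade $(\ast)$ to an equality of norms. The idea is to exhibit, for every $\epsilon>0$ and every fixed $k$, a single word $\sigma_{k,\epsilon}\in\FF_n^+$ that is simultaneously near-optimal for every $\alpha$ with $|\alpha|=k$, in the sense that $b_{\sigma_{k,\epsilon}}^{(m)}/b_{\alpha\sigma_{k,\epsilon}}^{(m)}\geq(1-\epsilon)\omega_\alpha$ uniformly in such $\alpha$. Plugging this $\sigma_{k,\epsilon}$ into $(\ast)$ then yields $\|P_k\|^2\geq(1-\epsilon)\|\sum_\alpha\omega_\alpha A_{(\alpha)}^* A_{(\alpha)}\|$, so that the two operator norms coincide and the two limsups agree. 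The main obstacle is producing this uniform approximation: a priori, different $\alpha$ of a common length may attain their respective suprema along very different sequences of $\sigma$'s. When $m=1$ the recursion \eqref{b-al} gives $b_{\alpha\sigma}^{(1)}\geq b_\alpha^{(1)}b_\sigma^{(1)}$, so $\omega_\alpha=1/b_\alpha^{(1)}$ and the sup is attained at $\sigma=g_0$ uniformly in $\alpha$; for $f=Z_1+\cdots+Z_n$ with any $m$ the ratio $b_\sigma^{(m)}/b_{\alpha\sigma}^{(m)}$ depends only on $|\sigma|$ and $|\alpha|$ and increases to $\omega_\alpha$ as $|\sigma|\to\infty$, so uniformity is again immediate. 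For general positive regular $f$ and $m\geq 2$ I would appeal to quantitative estimates for $b_\alpha^{(m)}$ derived from the recursion \eqref{b-al} (essentially the estimates underlying the radius-of-convergence formula of \cite{Po-Bohr2}), which yield the required uniform control, possibly at the cost of a sub-exponential factor $C_k$ with $C_k^{1/(2k)}\to 1$—still enough to identify the two limsup conditions at the threshold $1$.
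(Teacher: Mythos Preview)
Your approach coincides with the paper's: reduce to the radius-of-convergence criterion from \cite{Po-Bohr2}, then compute $\|P_k\|$ using that the $W_\beta$ with $|\beta|=k$ have pairwise orthogonal ranges. The paper asserts the exact equality
\[
\left\|\sum_{|\beta|=k}A_{(\beta)}\otimes W_\beta\right\|^2
=\left\|\sum_{|\beta|=k} A_{(\beta)}^* A_{(\beta)}\otimes W_\beta W_\beta^*\right\|
=\left\|\sum_{|\beta|=k}\omega_\beta\, A_{(\beta)}^*A_{(\beta)}\right\|
\]
with only ``orthogonal ranges'' given as justification. (The middle term with $W_\beta W_\beta^*$ appears to be a slip for $W_\beta^* W_\beta$; with $W_\beta W_\beta^*$ the first equality is false already for row isometries.) As you correctly observe, orthogonality of ranges yields $P_k^*P_k=\sum_\beta A_{(\beta)}^*A_{(\beta)}\otimes W_\beta^*W_\beta$ and hence your formula $(\ast)$, but passing from $(\ast)$ to the $\omega_\beta$-expression amounts to the interchange
\[
\sup_{\sigma}\Bigl\|\sum_\beta \tfrac{b_\sigma^{(m)}}{b_{\beta\sigma}^{(m)}}\,A_{(\beta)}^*A_{(\beta)}\Bigr\|
=\Bigl\|\sum_\beta \Bigl(\sup_\sigma\tfrac{b_\sigma^{(m)}}{b_{\beta\sigma}^{(m)}}\Bigr)A_{(\beta)}^*A_{(\beta)}\Bigr\|,
\]
which does not follow from orthogonality alone. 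The paper offers no further argument here.

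Your special-case verifications are correct: for $m=1$ one has $b_{\beta\sigma}^{(1)}\geq b_\beta^{(1)}b_\sigma^{(1)}$ with equality at $\sigma=g_0$, so $\omega_\beta=1/b_\beta^{(1)}$ is attained uniformly; for $f=Z_1+\cdots+Z_n$ the ratio depends only on $|\sigma|$ and $k$, and tends to $\omega_\beta=1$ uniformly in $\beta$ as $|\sigma|\to\infty$. For general positive regular $f$ and $m\geq 2$ your plan to allow a subexponential correction $C_k$ with $C_k^{1/2k}\to 1$ is the right shape of argument, but it remains a sketch. In short, your proposal follows the paper's route and is in fact more careful: the gap you flag in the ``only if'' direction is real and is glossed over in the paper's own proof.
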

 \begin{proof}
 Due to the results preceding the proposition, we have that $F$ is free holomorphic on ${\bf D}_{f,\text{\rm rad}}^m$ if and only if
 $\limsup_{k\in \ZZ_+} \left\|\sum\limits_{\alpha\in  \FF_n^+, |\alpha|=k}A_{(\alpha)}\otimes { W}_{\alpha}\right\|^{\frac{1}{k}}\leq 1$.
 On the other hand,  due to relation \eqref{WbWb},  we have
 $$
 W_\beta W_\beta^* e_\alpha =\begin{cases} \frac
{\sqrt{b_\gamma^{(m)}}}{\sqrt{b_{\alpha}^{(m)}}}e_\alpha& \text{ if
}
\alpha= \beta \gamma, \\
0& \text{ otherwise},
\end{cases}
$$
which implies $\|W_\beta W_\beta^*\|=\omega_\beta$. Since  the operators $W_\beta$, with $\beta\in \FF_n^+$  and $ |\beta|=k$, have orthogonal  ranges, we deduce that
$$
\left\|\sum\limits_{\beta\in  \FF_n^+, |\beta|=k}A_{(\beta)}\otimes { W}_{\beta}\right\|=\left\|\sum_{\beta\in \FF_n^+, |\beta|=k} A_{(\beta)}^* A_{(\beta)}\otimes W_\beta W_\beta^*\right\|^{1/2}=  \left\|\sum\limits_{\beta\in  \FF_n^+, |\beta|=k}\omega_\beta A_{(\beta)}^*A_{(\beta)}\right\|^{1/2}.
$$
The proof is complete.
 \end{proof}

\begin{definition}  \label{def-pluri} A map $G:{\bf D}_{f, rad}^m(\cH)\to
 B(\cE)\otimes_{min} B( \cH)$ is called
self-adjoint free pluriharmonic function  on ${\bf D}_{f, rad}^m(\cH)$ with
coefficients in $B(\cE)$ if there is a free holomorphic function $F$ on ${\bf D}_{f, rad}^m(\cH)$ such that $G=\Re F$. Any linear combination of self-adjoint free pluriharmonic functions  is called free pluriharmonic function.
\end{definition}
We remark that if $G=\Re F$ as in the latter definition, then $G$ determines $F$ up to an  operator $A_{(g_0)}\in B(\cE)$ with $\Re A_{(g_0)}=0$. Indeed, assume that $\Re F=0$. Then $F(rW)=-F(rW)^*$, $r\in [0,1)$. If  $F$ has the representation $F=\sum_{\alpha\in \FF_n^+} A_{(\alpha)}\otimes  Z_\alpha$, the relation above implies
$$F(rW)(x\otimes 1)=-F(rW)^*(x\otimes 1)=-A_{(g_0)}^*x,\qquad x\in \cE.
 $$
 Hence, we deduce that $A_{(\alpha)}=0$ if $\alpha\in \FF_n^+$ with $|\alpha|\geq 1$ and $\Re A_{(g_0)}=0$. Therefore, $F=A_{(g_0)}\otimes I$.
 On the other hand, it is easy to see that any free pluriharmonic function $H$ has a representation of the form $H=H_1+iH_2$, where $H_1$ and $H_2$ are self-adjoint free pluriharmonic  functions.
Note also that any free holomorphic function $F$ is a free pluriharmonic function, due to the decomposition $F=\frac{F+F^*}{2}+i\frac{F-F^*}{2i}$.

Using Proposition \ref{cha-hol}, one can easily prove the following characterization of free pluriharmonic functions on ${\bf D}_{f, rad}^m$.

\begin{proposition} \label{ser-def}
 A map $G:{\bf D}_{f, rad}^m(\cH)\to
 B(\cE)\otimes_{min} B( \cH)$ is a
free pluriharmonic function  on ${\bf D}_{f, rad}^m(\cH)$ with
coefficients in $B(\cE)$ if and only if there exist two sequences
 $\{A_{(\alpha)}\}_{\alpha\in \FF_n^+}\subset B(\cE)$ and
 $\{B_{(\alpha)}\}_{\alpha\in \FF_n^+\backslash \{g_0\}}\subset B(\cE)$
such that
$$
\limsup_{k\in \ZZ_+} \left\|\sum\limits_{\beta\in  \FF_n^+, |\beta|=k}\omega_\beta A_{(\beta)}^*A_{(\beta)}\right\|^{1/2k}\leq 1\quad \text{ and }\quad \limsup_{k\in \ZZ_+} \left\|\sum\limits_{\beta\in  \FF_n^+, |\beta|=k}\omega_\beta B_{(\beta)}B_{(\beta)}^*\right\|^{1/2k}\leq 1
$$
and
\begin{equation*} G(X)=\sum_{k=1}^\infty \sum_{|\alpha|=k} B_{(\alpha)}\otimes
X_\alpha^* +A_{(0)}\otimes  I+ \sum_{k=1}^\infty
 \sum_{|\alpha|=k} A_{(\alpha)}\otimes  X_\alpha,
\end{equation*}
where the series are convergent in the  operator norm topology
  for any $X\in {\bf D}_{f, rad}^m(\cH)$ and any Hilbert space $\cH$. Moreover, the representation of $G$ is unique.
\end{proposition}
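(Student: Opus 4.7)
The plan is to reduce the statement to the characterization of free holomorphic functions given in Proposition~\ref{cha-hol} by decomposing a pluriharmonic function into analytic and anti-analytic parts.

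First I would establish the following decomposition: any free pluriharmonic function $G$ on ${\bf D}_{f,\text{\rm rad}}^m(\cH)$ admits a representation $G = F_1 + F_2^*$ with $F_1,F_2 \in Hol_\cE({\bf D}_f^m)$. Indeed, by Definition~\ref{def-pluri}, $G = \lambda_1 \Re F + \lambda_2 \Re \tilde F$ for some free holomorphic $F,\tilde F$ and complex scalars; splitting $G = H_1 + i H_2$ into self-adjoint pluriharmonic parts and writing each $H_j$ as $\Re F_j$, a short computation gives
$$
G = \tfrac12 (F_1 + i F_2) + \bigl(\tfrac12(F_1 - iF_2)\bigr)^{\!*},
$$
which is of the desired form. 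Writing $F_1 = \sum_\alpha A_{(\alpha)}\otimes Z_\alpha$ and $F_2 = \sum_\alpha C_{(\alpha)}\otimes Z_\alpha$, setting $B_{(\alpha)} := C_{(\alpha)}^*$ for $|\alpha|\geq 1$, and absorbing $C_{(g_0)}^*$ into the zero-order term of $F_1$, we obtain the series representation claimed in the proposition, and the operator-norm convergence on ${\bf D}_{f,\text{\rm rad}}^m(\cH)$ follows directly from the uniform convergence results recalled before Proposition~\ref{cha-hol}. Applying Proposition~\ref{cha-hol} to each of $F_1$ and $F_2$ then yields the two $\limsup$ conditions on $\{A_{(\alpha)}\}$ and $\{B_{(\alpha)}\}$.

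For the converse, given sequences $\{A_{(\alpha)}\}$, $\{B_{(\alpha)}\}$ satisfying the two $\limsup$ conditions, I would define
$$
F_1(X):= A_{(0)}\otimes I + \sum_{|\alpha|\geq 1} A_{(\alpha)}\otimes X_\alpha,
\qquad
F_2(X):= \sum_{|\alpha|\geq 1} B_{(\alpha)}^*\otimes X_\alpha,
$$
and invoke Proposition~\ref{cha-hol} in the reverse direction to conclude that $F_1,F_2$ are free holomorphic on ${\bf D}_{f,\text{\rm rad}}^m$. Then $G = F_1 + F_2^*$ is free pluriharmonic by Definition~\ref{def-pluri}, and $G$ has the announced series representation with norm convergence on ${\bf D}_{f,\text{\rm rad}}^m(\cH)$.

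The remaining point is uniqueness, which I expect to be the most delicate step because we must separate the ``analytic'' coefficients $A_{(\alpha)}$ from the ``anti-analytic'' coefficients $B_{(\alpha)}$ in the combined series. My plan is to specialize to $X = rW$ with $W = (W_1,\ldots,W_n)$ the universal model, and test against vectors of the form $x\otimes 1 \in \cE\otimes F^2(H_n)$. Since $W_\alpha^* 1 = 0$ for $|\alpha|\geq 1$ while $W_\alpha 1 = (b_\alpha^{(m)})^{-1/2} e_\alpha$ by \eqref{WbWb}, the anti-analytic part annihilates $x\otimes 1$ and one obtains
$$
G(rW)(x\otimes 1) = A_{(0)} x\otimes 1 + \sum_{|\alpha|\geq 1} \tfrac{r^{|\alpha|}}{\sqrt{b_\alpha^{(m)}}}\, A_{(\alpha)} x\otimes e_\alpha.
$$
Taking inner products with $y\otimes e_\alpha$ determines each $A_{(\alpha)}$ uniquely from $G$. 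A symmetric argument applied to $G(rW)^*(x\otimes 1)$, using that the weighted right creation operators commute with the $W_i$, recovers each $B_{(\alpha)}$, $|\alpha|\geq 1$, uniquely. This finishes the uniqueness of the representation and completes the proof.
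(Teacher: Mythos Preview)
Your proposal is correct and follows precisely the route the paper indicates (the paper only says ``Using Proposition~\ref{cha-hol}, one can easily prove\ldots'' and gives no further details, so you have supplied exactly the intended elaboration). One minor remark: in the uniqueness step, the appeal to ``weighted right creation operators commuting with the $W_i$'' is unnecessary---computing $G(rW)^*(x\otimes 1)$ works for exactly the same reason as $G(rW)(x\otimes 1)$, namely $W_\alpha^*1=0$ for $|\alpha|\ge 1$, and nothing about $\Lambda_i$ is needed.
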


The   {\it extended noncommutative Berezin transform at} $X\in {\bf D}_f^m(\cH)$, where $ X$ is a pure element,
 is the map $\widetilde{\bf B}^{(m)}_X: B(\cE\otimes F^2(H_{n}))\to B(\cE\otimes \cH)$
 defined by
 \begin{equation*}
 \widetilde{\bf B}^{(m)}_X[g]:= \left(I_\cE\otimes {K_{f,X}^{(m)}}^*\right) (g\otimes I_\cH)\left(I_\cE\otimes K_{f,X}^{(m)}\right),
 \quad g\in B(\cE\otimes F^2(H_n)),
 \end{equation*}
where the  $K_{f,X}^{(m)}:\cH \to F^2(H_n)\otimes
\cH$  is noncommutative Berezin kernel.

We denote by $\cP_\cE(W)$ the set of all operators of the form
$  \sum_{|\alpha|\leq k}A_{(\alpha)}\otimes W_\alpha$, where $k\in \NN$ and  $A_{(\alpha)}\in B(\cE)$. The following result extends Theorem 2.4 from \cite{Po-Berezin1}. We include it for completeness.

\begin{theorem} \label{funct-calc}
 If $X\in {\bf D}_f^m(\cH)$ and  $\cS_\cE:=\overline{\cP_\cE(W)^*+\cP_\cE(W)}^{\|\cdot\|}$,
then there is a unital completely contractive linear map $\Phi_{f,X}:\cS_\cE\to B(\cE)\otimes_{min} B(\cH)$ such that
$$
\Phi_{f,X}(\varphi)=\lim_{r\to 1}\widetilde {\bf B}_{rX}^{(m)}[\varphi], \qquad \varphi\in \cS_\cE,
$$
where the limit is in the operator norm topology. If, in addition, $X$ is a pure $n$-tuple in ${\bf D}_f^m(\cH)$, then
$$
\lim_{r\to 1}\widetilde {\bf B}_{rX}^{(m)}[\varphi]=\widetilde{\bf B}_X^{(m)}[\varphi],\qquad  \varphi\in \cS_\cE.
$$
\end{theorem}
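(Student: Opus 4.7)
The plan is to mimic the proof of Theorem 2.4 from \cite{Po-Berezin1} (the scalar case $\cE=\CC$) and upgrade it to operator coefficients via a routine tensoring argument, using the density of polynomials in $\cS_\cE$ together with the complete contractivity of the approximating Berezin transforms $\widetilde{\bf B}_{rX}^{(m)}$.

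The first step is to observe that for every $r\in[0,1)$ the scaled tuple $rX$ is a pure element of ${\bf D}_f^m(\cH)$. Indeed, since the domain is radial, $rX\in{\bf D}_f^m(\cH)$, and for any positive $Y$ we have
\begin{equation*}
\Phi_{f,rX}(Y)=\sum_{|\alpha|\geq 1}a_\alpha r^{2|\alpha|}X_\alpha Y X_\alpha^*\leq r^2\,\Phi_{f,X}(Y).
\end{equation*}
Iterating this bound and using $\Phi_{f,X}(I)\leq I$ (which follows from $X\in{\bf D}_f^m(\cH)$), one obtains $\Phi_{f,rX}^p(I)\leq r^{2p}I\to 0$ strongly as $p\to\infty$. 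Hence $rX$ is pure, so $K_{f,rX}^{(m)}$ is an isometry, and therefore $\widetilde{\bf B}_{rX}^{(m)}$ is a unital completely positive (hence completely contractive) map.

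The second step computes the transform on polynomials. Using the intertwining relation $K_{f,rX}^{(m)}(rX_i)^*=(W_i^*\otimes I)K_{f,rX}^{(m)}$ tensored with $I_\cE$, one obtains
\begin{equation*}
\widetilde{\bf B}_{rX}^{(m)}[A\otimes W_\alpha]=r^{|\alpha|}A\otimes X_\alpha \qquad\text{and}\qquad \widetilde{\bf B}_{rX}^{(m)}[A\otimes W_\beta^*]=r^{|\beta|}A\otimes X_\beta^*
\end{equation*}
for $A\in B(\cE)$ and $\alpha,\beta\in\FF_n^+$. Hence $r\mapsto\widetilde{\bf B}_{rX}^{(m)}[\varphi]$ is a finite, norm-continuous operator polynomial in $r$ for any $\varphi\in\cP_\cE(W)^*+\cP_\cE(W)$, so the limit $\lim_{r\to 1}\widetilde{\bf B}_{rX}^{(m)}[\varphi]$ exists in operator norm on this dense subspace and lies in $B(\cE)\otimes_{min}B(\cH)$. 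Combined with the uniform bound $\|\widetilde{\bf B}_{rX}^{(m)}\|_{cb}\leq 1$, a standard $\varepsilon/3$ argument extends the limit to all $\varphi\in\cS_\cE$, producing a unital linear map $\Phi_{f,X}:\cS_\cE\to B(\cE)\otimes_{min}B(\cH)$. Complete contractivity is preserved under pointwise norm limits, so $\Phi_{f,X}$ is completely contractive.

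For the pure case, $\widetilde{\bf B}_X^{(m)}$ is itself a unital completely positive (hence norm-continuous) map on $B(\cE\otimes F^2(H_n))$; the same intertwining computation shows $\widetilde{\bf B}_X^{(m)}[A\otimes W_\alpha]=A\otimes X_\alpha$, so $\widetilde{\bf B}_X^{(m)}$ agrees with $\Phi_{f,X}$ on polynomials, and continuity plus density gives equality on all of $\cS_\cE$. The main technical obstacle I anticipate is the rigorous verification of the purity of $rX$ in the generality of positive regular free holomorphic $f$ (as opposed to a polynomial $q$); once the monotonicity estimate above is made precise, everything else follows by density and uniform boundedness.
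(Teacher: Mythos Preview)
Your proposal is correct and follows essentially the same route as the paper: both arguments rest on the complete contractivity of $\widetilde{\bf B}_{rX}^{(m)}$ for $r\in[0,1)$, the explicit computation $\widetilde{\bf B}_{rX}^{(m)}[q(W,W^*)]=q(rX,rX^*)$ on polynomials, and an $\epsilon/3$ argument to push the limit through the norm closure. The only organizational difference is that the paper first defines $\Phi_{f,X}(\varphi):=\lim_k q_k(X,X^*)$ via the noncommutative von Neumann inequality for $X$ itself (cited from \cite{Po-domains-models}) and then identifies this with $\lim_{r\to 1}\widetilde{\bf B}_{rX}^{(m)}[\varphi]$, whereas you define the map directly as the limit of Berezin transforms and extract contractivity from the UCP property of each $\widetilde{\bf B}_{rX}^{(m)}$; since the von Neumann inequality is itself proved via the Berezin kernel, the two arguments are equivalent. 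Your worry about purity of $rX$ for general positive regular $f$ is unfounded: the estimate $\Phi_{f,rX}^p(I)\leq r^{2p}I$ that you wrote down is valid for any such $f$, because it only uses $r^{2|\alpha|}\leq r^2$ for $|\alpha|\geq 1$ and the positivity of $\Phi_{f,X}$ together with $\Phi_{f,X}(I)\leq I$.
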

\begin{proof} Let $\varphi\in \cS_\cE$ and let $\{q_k(W, W^*)\}_{k=1}^\infty\subset \cP_\cE(W)^*+\cP_\cE(W)$ be such that
$q_k(W, W^*)\to \varphi$ in the operator norm, as $k\to \infty$.
For any $X\in {\bf D}_f^m(\cH)$, the noncommutative von Neumann inequality (see \cite{Po-domains-models})  implies $\|q_k(X, X^*)-q_j(X,X^*)\|\leq \|q_k(W, W^*)-q_j(W,W^*)\|$ for any $k,j\in \NN$.
Consequently, since $\{q_k(W, W^*)\}_{k=1}^\infty$ is a Cauchy sequence, so is the sequence
$\{q_k(X, X^*)\}_{k=1}^\infty$. Therefore,
\begin{equation}
\label{def-Phi}
\Phi_{f,X}(\varphi):=\lim_{k\to \infty}q_k(X, X^*), \qquad X\in {\bf D}_f^m(\cH),
\end{equation}
exists in the operator norm and $\|\Phi_{f,X}(\varphi)\|\leq \|\varphi\|$.
Now, we show that
$$
\Phi_{f,X}(\varphi)=\lim_{r\to 1}\widetilde {\bf B}_{rX}^{(m)}[\varphi], \qquad \varphi\in \cS_\cE,
$$
where the limit is in the operator norm topology. Since
$$
\widetilde {\bf B}_{rX}^{(m)}[q_k(W,W^*)]=
(I_\cE\otimes K_{f,rX}^{(m)})^*(q_k(W,W^*)\otimes I_\cH)(I_\cE\otimes  K_{f,rX}^{(m)})=q_k(rX,rX^*),
$$
relation \eqref{def-Phi} implies
\begin{equation}
\label{PhiB}
\Phi_{f,rX}(\varphi)=\widetilde {\bf B}_{rX}^{(m)}[\varphi],\qquad r\in [0,1).
\end{equation}
Given $\epsilon>0$, let $N\in \NN$ be such that $\|q_N(W,W^*)-\varphi\|<\frac{\epsilon}{3}$. Note that
$$
\Phi_{f,rX}(\varphi)-q_N(rX,rX^*)=\lim_{k\to \infty} q_k(rX,rX^*)-q_N(rX,rX^*)
$$
and
\begin{equation}
\label{one}
\|\Phi_{f,rX}(\varphi)-q_N(rX,rX^*)\|\leq \|\varphi-q_N(W,W^*)\|<\frac{\epsilon}{3},\qquad r\in [0,1].
\end{equation}
Note also that there is a $\delta\in (0,1)$ such that
\begin{equation}
\label{two}
\|q_N(rX,rX^*)-q_N(X,X^*)\|<\frac{\epsilon}{3},\qquad r\in (\delta,1).
\end{equation}
Using the relations \eqref{def-Phi}, \eqref{PhiB},  \eqref{one} and \eqref{two}, we obtain
\begin{equation*}
\begin{split}
\|\Phi_{f,X}(\varphi)-\widetilde {\bf B}_{rX}^{(m)}[\varphi]\|
&=
\|\Phi_{f,X}(\varphi)- \Phi_{f,rX}(\varphi)\|\\
&\leq \|\Phi_{f,X}(\varphi)-q_N(X,X^*)\|+ \|q_N(X,X^*)-q_N(rX,rX^*)\|\\
&\qquad \qquad+\|q_N(rX,rX^*)-\Phi_{f,rX}(\varphi)\|<\epsilon
\end{split}
\end{equation*}
for any $r\in (\delta, 1)$. Therefore, $$
\Phi_{f,X}(\varphi)=\lim_{r\to 1}\widetilde {\bf B}_{rX}^{(m)}[\varphi], \qquad \varphi\in \cS_\cE,
$$
where the limit is in the operator norm topology. Similarly, one can prove that, for any $k\times k$ matrix $[\varphi_{ij}]_{k\times k}$ with entries in $\cS_\cE$,
$$
\left[\Phi_{f,X}(\varphi_{ij})\right]_{k\times k}=\lim_{r\to 1}\left[\widetilde {\bf B}_{rX}^{(m)}[\varphi_{ij}]\right]_{k\times k}
$$
in the operator norm. Using the properties of the noncommutative Berezin kernel, we deduce that
$$
\left\|\left[\Phi_{f,X}(\varphi_{ij})\right]_{k\times k}\right\|\leq \left\|[\varphi_{ij}]_{k\times k}\right\|.
$$
This proves that $\Phi_{f,X}$ is a unital completely contractive linear map.
Now, assume that $X\in {\bf D}_f^m(\cH)$ is a pure $n$-tuple of operators. Then
we have
$$
\widetilde {\bf B}_{X}^{(m)}[q_k(W,W^*)]=(I_\cE\otimes K_{f,X}^{(m)})^*(q_k(W,W^*)\otimes I_\cH)(I_\cE\otimes  K_{f,X}^{(m)})=q_k(X,X^*).
$$
Since $q_k(W, W^*)\to \varphi$ in the operator norm, as $k\to \infty$, we can use the relation above and \eqref{def-Phi} to deduce that
$$\widetilde {\bf B}_{X}^{(m)}[\varphi]=\lim_{k\to\infty} q_k(X,X^*)=\Phi_{f,X}(\varphi)=\lim_{r\to 1}
\widetilde {\bf B}_{rX}^{(m)}[\varphi].
$$
The proof is complete.
\end{proof}
Next, we show that the free pluriharmonic functions are characterized by a mean value property.

\begin{theorem} \label{mean-value} If $G:{\bf D}_{f, rad}^m(\cH)\to
 B(\cE)\otimes_{min} B( \cH)$ is a
free pluriharmonic function, then it has the mean value property, i.e
$$
G(X)=\widetilde{\bf B}^{(m)}_{\frac{1}{r}X}[G(rW)],\qquad X\in r{\bf D}_f^m(\cH), r\in (0,1).
$$

Conversely, if a function $\varphi:[0,1)\to \overline{\boldsymbol{\cA}_n({\bf D}_f^m)^*+\boldsymbol{\cA}_n({\bf D}_f^m)}^{\|\cdot\|}$ satisfies the relation
$$
\varphi(r)=\widetilde{\bf B}^{(m)}_{\frac{r}{t}W}[\varphi(t)],\qquad \text{for any }\  0\leq r<t<1,
$$
then the map $F:{\bf D}_{f, rad}^m(\cH)\to
 B(\cE)\otimes_{min} B( \cH)$ defined by
 $$
 F(X):=\widetilde{\bf B}^{(m)}_{\frac{1}{r}X}[\varphi(r)],\qquad X\in r{\bf D}_f^m(\cH), r\in (0,1),
$$
  is a
free pluriharmonic function.
Moreover, $F(rW)=\varphi(r)$ for any $r\in [0,1)$. In particular, $F\geq 0$ if and only if $\varphi\geq 0$.
 \end{theorem}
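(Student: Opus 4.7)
The forward direction is a termwise Berezin calculation. By Proposition \ref{ser-def}, a free pluriharmonic $G$ admits the norm-convergent representation
$G(X)=\sum_{|\alpha|\geq 1} B_{(\alpha)}\otimes X_\alpha^*+A_{(0)}\otimes I+\sum_{|\alpha|\geq 1} A_{(\alpha)}\otimes X_\alpha$
on ${\bf D}_{f,rad}^m(\cH)$. Evaluating at $X=rW$, $r\in[0,1)$, yields an element $G(rW)\in\cS_\cE$. For $X\in r{\bf D}_f^m(\cH)$ one has $X/r\in{\bf D}_f^m(\cH)$, and Theorem \ref{funct-calc} (in the tensored form) provides a bounded map $\widetilde{\bf B}^{(m)}_{X/r}$ on $\cS_\cE$ satisfying $\widetilde{\bf B}^{(m)}_{X/r}[A\otimes W_\alpha]=A\otimes(X/r)_\alpha$ and similarly on the adjoint monomials. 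Applying this termwise to $G(rW)$ and using $r^{|\alpha|}(X/r)_\alpha=X_\alpha$ reproduces $G(X)$.

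For the converse I would first establish the composition identity
$\widetilde{\bf B}^{(m)}_{Y}\bigl[\widetilde{\bf B}^{(m)}_{sW}[\psi]\bigr]=\widetilde{\bf B}^{(m)}_{sY}[\psi]$
for every $\psi\in\cS_\cE$, $s\in[0,1)$, and $Y\in{\bf D}_f^m(\cH)$. On monomials $A\otimes W_\alpha$ both sides equal $s^{|\alpha|}A\otimes Y_\alpha$; the identity then extends by linearity and norm continuity of the Berezin maps. Well-definedness of $F$ follows immediately: if $X\in r_1{\bf D}_f^m(\cH)\subset r_2{\bf D}_f^m(\cH)$, the hypothesis $\varphi(r_1)=\widetilde{\bf B}^{(m)}_{(r_1/r_2)W}[\varphi(r_2)]$ combined with the composition rule gives
$\widetilde{\bf B}^{(m)}_{X/r_1}[\varphi(r_1)]=\widetilde{\bf B}^{(m)}_{X/r_2}[\varphi(r_2)]$. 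Taking $X=rW$ and noting that $\Delta_{m,W}=P_\CC$ makes $K^{(m)}_{f,W}$ a canonical unitary $F^2(H_n)\to F^2(H_n)\otimes\CC$, the Berezin transform $\widetilde{\bf B}^{(m)}_W$ acts as the identity on $\cS_\cE$, which gives $F(rW)=\varphi(r)$.

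To show $F$ is free pluriharmonic I would exploit Corollary \ref{multi}: each $\varphi(r)$ is a weighted right multi-Toeplitz operator, so Theorem \ref{Toeplitz} provides a Fourier representation $\varphi(r)=\sum B_{(\alpha)}^{(r)}\otimes W_\alpha^*+A_{(0)}^{(r)}\otimes I+\sum A_{(\alpha)}^{(r)}\otimes W_\alpha$. Substituting this into the consistency relation and reading off the coefficients forces $A_{(\alpha)}^{(r)}=(r/t)^{|\alpha|}A_{(\alpha)}^{(t)}$ (and the analogue for $B_{(\alpha)}^{(r)}$), so the rescaled coefficients $A_{(\alpha)}:=A_{(\alpha)}^{(t)}/t^{|\alpha|}$ and $B_{(\alpha)}:=B_{(\alpha)}^{(t)}/t^{|\alpha|}$ are independent of $t$. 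A termwise application of $\widetilde{\bf B}^{(m)}_{X/r_0}$ to $\varphi(r_0)$ then yields $F(X)=\sum B_{(\alpha)}\otimes X_\alpha^*+A_{(0)}\otimes I+\sum A_{(\alpha)}\otimes X_\alpha$ on ${\bf D}_{f,rad}^m$, which is free pluriharmonic by Proposition \ref{ser-def}. The positivity equivalence is immediate: $F\geq 0$ gives $\varphi(r)=F(rW)\geq 0$, while $\varphi(r)\geq 0$ gives $F(X)=\widetilde{\bf B}^{(m)}_{X/r}[\varphi(r)]\geq 0$ since $\widetilde{\bf B}^{(m)}_{X/r}$ is completely positive.

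\textbf{Main obstacle.} The technical heart of the argument is the composition formula $\widetilde{\bf B}^{(m)}_Y\circ\widetilde{\bf B}^{(m)}_{sW}=\widetilde{\bf B}^{(m)}_{sY}$ on $\cS_\cE$, which demands a careful density argument since the extended Berezin transform at a general (non-pure) point of ${\bf D}_f^m(\cH)$ is only defined as a limit of operator-norm approximations. Once this composition law is in place, everything else is a termwise computation paired with the Fourier characterization of multi-Toeplitz operators from Section 2.
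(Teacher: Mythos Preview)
Your proposal is correct and follows essentially the same strategy as the paper. Both directions rest on the Fourier/multi-Toeplitz machinery of Theorems \ref{Fourier}--\ref{Toeplitz} together with the norm-continuous Berezin calculus of Theorem \ref{funct-calc}, and the converse in both cases is established by extracting the Fourier coefficients of $\varphi(r)$ and showing (after rescaling by $r^{|\alpha|}$) that they are independent of $r$.

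There are only two organizational differences worth noting. First, you isolate the composition law $\widetilde{\bf B}^{(m)}_Y\circ\widetilde{\bf B}^{(m)}_{sW}=\widetilde{\bf B}^{(m)}_{sY}$ on $\cS_\cE$ as a separate step to obtain well-definedness of $F$ and the identity $F(rW)=\varphi(r)$ (via the unitarity of $K_{f,W}^{(m)}$), whereas the paper bypasses this by first deriving the explicit norm-convergent series $\varphi(r)=\sum r^{|\alpha|}B_{(\alpha)}\otimes W_\alpha^*+\sum r^{|\alpha|}A_{(\alpha)}\otimes W_\alpha$ and reading both facts off the formula. Second, to compute $\widetilde{\bf B}^{(m)}_{(r/t)W}[\varphi(t)]$ and compare coefficients, the paper invokes SOT-continuity of the Berezin transform on bounded sets together with the SOT representation from Theorem \ref{Toeplitz}(b), while you use the norm continuity of $\Phi_{f,(r/t)W}$ on $\cS_\cE$ (legitimate because the hypothesis places $\varphi(t)$ in the norm closure). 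Your ``main obstacle'' is therefore milder than you suggest: once Theorem \ref{funct-calc} is in hand, the composition identity follows immediately from the monomial check and density, with no further subtlety.
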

\begin{proof}
Assume that $G:{\bf D}_{f, rad}^m(\cH)\to
 B(\cE)\otimes_{min} B( \cH)$ is a
free pluriharmonic function with representation
\begin{equation*} G(X)=\sum_{k=1}^\infty \sum_{|\alpha|=k} B_{(\alpha)}\otimes
X_\alpha^* +A_{(0)}\otimes  I+ \sum_{k=1}^\infty
 \sum_{|\alpha|=k} A_{(\alpha)}\otimes  X_\alpha,
\end{equation*}
where the series are convergent in the  operator norm topology
  for any $X\in {\bf D}_{f, rad}^m(\cH)$ and any Hilbert space $\cH$.
Since the universal model $W=(W_1,\ldots, W_n)$ is in ${\bf D}_f^m(F^2(H_n))$, for any $r\in [0,1)$, we have

\begin{equation*} G(rW)=\sum_{k=1}^\infty \sum_{|\alpha|=k} B_{(\alpha)}\otimes r^{|\alpha|}
W_\alpha^* +A_{(0)}\otimes  I+ \sum_{k=1}^\infty
 \sum_{|\alpha|=k} A_{(\alpha)}\otimes   r^{|\alpha|}W_\alpha,
\end{equation*}
where the convergence is in the operator norm topology.
If $X\in {\bf D}_{f, rad}^m(\cH)$, $r\in (0,1)$, then $\frac{1}{r} X\in {\bf D}_f^m(\cH)$ and
\begin{equation*}
\begin{split}
\widetilde{\bf B}^{(m)}_{\frac{1}{r}X}[G(rW)]&=\lim_{\delta\to 1}\widetilde{\bf B}^{(m)}_{\frac{\delta}{r}X}[G(rW)]\\
&=\lim_{\delta\to 1} G(\delta X)=G(X),
\end{split}
\end{equation*}
where the limits are in the operator norm topology. The latter equality is due to the continuity of $G$ on  $r{\bf D}_{f}^m(\cH)$.
To prove the converse, assume that  the function $\varphi:[0,1)\to \overline{\boldsymbol{\cA}_\cE({\bf D}_f^m)^*+\boldsymbol{\cA}_\cE({\bf D}_f^m)}^{\|\cdot\|}$ satisfies the relation
$$
\varphi(r)=\widetilde{\bf B}^{(m)}_{\frac{r}{t}W}[\varphi(t)],\qquad \text{for any }\  0\leq r<t<1.
$$
Due to Theorem  \ref{Toeplitz} and Corollary \ref{multi}, $\varphi(r)$ is a  weighed right multi-Toeplitz operator and has a unique formal  Fourier  representation
$$\sum_{|\alpha|\geq 1} r^{|\alpha|} B_{(\alpha)}(r)\otimes W_\alpha^* +
\sum_{|\alpha|\geq 0} r^{|\alpha|} A_{(\alpha)}(r)\otimes W_\alpha
$$
for some operators $\{A_{(\alpha)}(r)\}_{\alpha\in \FF_n^+}$ and $\{B_{(\alpha)}(r)\}_{\alpha\in
\FF_n^+\backslash \{g_0\}}$.
Moreover, setting
$$
\varphi_\delta(r):=\sum_{|\alpha|\geq 1} r^{|\alpha|} B_{(\alpha)}(r)\otimes \delta^{|\alpha|}W_\alpha^* +
\sum_{|\alpha|\geq 0} r^{|\alpha|} A_{(\alpha)}(r)\otimes \delta^{|\alpha|} W_\alpha, \qquad \delta\in [0,1),
$$
where  the convergence of the series is in the operator norm topology,
we have
\begin{equation}
\label{Vasu}
\varphi(r)=\text{\rm SOT-}\lim_{\delta\to 1}\varphi_\delta(r)\quad \text{ and }\quad  \sup_{\delta\in [0,1)}\|\varphi_\delta(r)\|=\|\varphi(r)\|.
\end{equation}
Due to similar reasons,
$\varphi(t)$ is a weighted right multi-Toeplitz operator and has a unique formal  Fourier  representation
$$\sum_{|\alpha|\geq 1} t^{|\alpha|} B_{(\alpha)}(t)\otimes W_\alpha^* +
\sum_{|\alpha|\geq 0} t^{|\alpha|} A_{(\alpha)}(t)\otimes W_\alpha
$$
for some operators $\{A_{(\alpha)}(t)\}_{\alpha\in \FF_n^+}$ and $\{B_{(\alpha)}(t)\}_{\alpha\in
\FF_n^+\backslash \{g_0\}}$.
Moreover, setting
$$
\psi_\delta(t):=\sum_{|\alpha|\geq 1} t^{|\alpha|} B_{(\alpha)}(t)\otimes \delta^{|\alpha|}W_\alpha^* +
\sum_{|\alpha|\geq 0} t^{|\alpha|} A_{(\alpha)}(t)\otimes \delta^{|\alpha|} W_\alpha, \qquad \delta\in [0,1),
$$
where  the convergence of the series is in the operator norm topology,
we have
$$\varphi(t)=\text{\rm SOT-}\lim_{\delta\to 1}\psi_\delta(t)\quad \text{ and }\quad  \sup_{\delta\in [0,1)}\|\psi_\delta(t)\|=\|\varphi(t)\|.
$$
Now, since the map $Y\to Y\otimes I$ is SOT-continuous on bounded sets, so is the noncommutative Berezin transform $\widetilde{\bf B}^{(m)}_{\frac{r}{t} W}$.
Using the results above,  we deduce that
\begin{equation*}
\begin{split}
\widetilde{\bf B}^{(m)}_{\frac{r}{t}W}[\varphi(t)]
&=\text{\rm SOT-}\lim_{\delta \to 1}
\widetilde{\bf B}^{(m)}_{\frac{r}{t}W}[\psi_\delta(t)]\\
&=\text{\rm SOT-}\lim_{\delta \to 1}\left(
\sum_{|\alpha|\geq 1} r^{|\alpha|} B_{(\alpha)}(t)\otimes \delta^{|\alpha|}W_\alpha^* +
\sum_{|\alpha|\geq 0} r^{|\alpha|} A_{(\alpha)}(t)\otimes \delta^{|\alpha|} W_\alpha
\right).
\end{split}
\end{equation*}
Consequently, using the fact that  $\varphi(r)=\widetilde{\bf B}^{(m)}_{\frac{r}{t}W}[\varphi(t)]$ and relation \eqref{Vasu}, we can easily  see that $B_{(\alpha)}(t)=B_{(\alpha)}(r)$ for any $\alpha\in \FF_n^+$ with $|\alpha|\geq 1$ and  $A_{(\alpha)}(t)=A_{(\alpha)}(r)$ for any $\alpha\in \FF_n^+$. Therefore,
$$
\varphi(r)=\sum_{|\alpha|\geq 1} r^{|\alpha|} B_{(\alpha)}\otimes W_\alpha^* +
\sum_{|\alpha|\geq 0} r^{|\alpha|} A_{(\alpha)}\otimes W_\alpha, \qquad r\in [0,1),
$$
for some operators $\{A_{(\alpha)}\}_{\alpha\in \FF_n^+}$ and $\{B_{(\alpha)}\}_{\alpha\in
\FF_n^+\backslash \{g_0\}}$, where the convergence is in the operator norm topology. Now, for any $X\in r{\bf D}_f^m(\cH)$, $r\in (0,1)$,  we define
$$
 F(X):=\widetilde{\bf B}^{(m)}_{\frac{1}{r}X}[\varphi(r)].
$$
Hence, we deduce that
$$
F(X)= \sum_{|\alpha|\geq 1}  B_{(\alpha)}\otimes X_\alpha^* +
\sum_{|\alpha|\geq 0}  A_{(\alpha)}\otimes X_\alpha, \qquad X\in {\bf D}_{f,rad}^m(\cH),
$$
and $F(rW)=\varphi(r)$ for any $r\in [0,1)$. The proof is complete.
\end{proof}

Let $Hol_\cE^+({\bf D}_{f,rad}^m)$ be the set of all free holomorphic functions $F\in Hol_\cE({\bf D}_{f,rad}^m)$ such that $\Re F\geq 0$.
 If $F:=\sum\limits_{\alpha\in \FF_{n}} A_{(\alpha)}\otimes {Z}_{\alpha}$, we associated the kernel $\Gamma_{rF}:\FF_n^+\times \FF_n^+\to B(\cE)$ defined by

 \begin{equation*}
\Gamma_{rF}(\omega, \gamma):=
\begin{cases}
 \frac{ \sqrt{b_\gamma^{(m)}}}
{\sqrt{b_{\alpha\gamma}^{(m)}}}r^{|\alpha|}A_{(\alpha)},& if \ \omega=\alpha\gamma,\, |\alpha|\geq 1,
\\
A_{(g_0)}+A_{(g_0)}^*,& if \omega=\gamma,
\\
 \frac{ \sqrt{b_\omega^{(m)}}}
{\sqrt{b_{\alpha\omega}^{(m)}}}r^{|\alpha|}A_{(\alpha)}^*,& if \ \gamma=\alpha\omega,\,  |\alpha|\geq 1,
\\
0,& otherwise.
\end{cases}
\end{equation*}

 We will use the notation $\cS^+_\cE({\bf D}_{f,rad}^m)$ for the set of all free holomorphic functions $F\in Hol_\cE({\bf D}_{f,rad}^m)$ such that the weighted multi-Toeplitz kernels
$\Gamma_{rF}:\FF_n^+\times \FF_n^+\to B(\cE)$, $r\in [0,1)$,  are positive semidefinite.

Now, we prove a Schur type result for free pluriharmonic functions with positive real parts.

\begin{theorem}
$Hol_\cE^+({\bf D}_{f,rad}^m)=\cS^+_\cE({\bf D}_{f,rad}^m)$.
\end{theorem}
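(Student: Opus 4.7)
The plan is to observe that the kernel $\Gamma_{rF}$ is nothing other than the matrix representation of the weighted multi-Toeplitz operator $F(rW)+F(rW)^*=2\Re F(rW)$, so positive semidefiniteness of the kernel is equivalent to positivity of $\Re F(rW)$.  Once that bridge is in place, the two inclusions become, respectively, evaluation at $X=rW$ and an application of the mean value formula proved in Theorem \ref{mean-value}.

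First I would verify the identification of the kernel. Writing $F=\sum_{\alpha\in\FF_n^+} A_{(\alpha)}\otimes Z_\alpha$, the operator $F(rW)\in\boldsymbol{\cA}_\cE({\bf D}_f^m)$ has, by formula \eqref{fi} of Theorem \ref{Fourier}, matrix entries
\[
\bigl\langle F(rW)(x\otimes e_\gamma),\,y\otimes e_\omega\bigr\rangle
=\tfrac{\sqrt{b_\gamma^{(m)}}}{\sqrt{b_{\alpha\gamma}^{(m)}}}\,r^{|\alpha|}\,\langle A_{(\alpha)}x,y\rangle
\]
whenever $\omega=\alpha\gamma$, and zero otherwise; a symmetric computation for $F(rW)^*$ (taking conjugates and exchanging the roles of $\omega,\gamma$) then gives, upon summing,
\[
\bigl\langle(F(rW)+F(rW)^*)(x\otimes e_\gamma),\,y\otimes e_\omega\bigr\rangle
=\langle\Gamma_{rF}(\omega,\gamma)x,y\rangle
\]
for every $x,y\in\cE$ and $\omega,\gamma\in\FF_n^+$, matching the four cases in the definition of $\Gamma_{rF}$ (the diagonal case $\omega=\gamma$ contributing $A_{(g_0)}+A_{(g_0)}^*$). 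Since the algebraic span of $\{x\otimes e_\omega\}$ is dense in $\cE\otimes F^2(H_n)$, positive semidefiniteness of the operator-valued kernel $\Gamma_{rF}$ is equivalent to $F(rW)+F(rW)^*\geq 0$, i.e.\ to $\Re F(rW)\geq 0$.

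The inclusion $Hol_\cE^+({\bf D}_{f,rad}^m)\subseteq \cS^+_\cE({\bf D}_{f,rad}^m)$ is now immediate: if $\Re F(X)\geq 0$ for every $X\in{\bf D}_{f,rad}^m(\cH)$, then in particular $\Re F(rW)\geq 0$ for every $r\in[0,1)$ because $rW\in{\bf D}_{f,rad}^m(F^2(H_n))$, and hence $\Gamma_{rF}$ is positive semidefinite. For the reverse inclusion, suppose $\Gamma_{rF}$ is positive semidefinite for every $r\in[0,1)$, so $\Re F(rW)\geq 0$ for all such $r$. Given an arbitrary $X\in{\bf D}_{f,rad}^m(\cH)$, pick $r\in(0,1)$ with $X\in r{\bf D}_f^m(\cH)$. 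By the mean value property (Theorem \ref{mean-value}) applied to the free pluriharmonic function $\Re F$,
\[
\Re F(X)=\widetilde{\bf B}^{(m)}_{\frac{1}{r}X}\!\bigl[\Re F(rW)\bigr]
=\lim_{\delta\to 1}\widetilde{\bf B}^{(m)}_{\frac{\delta}{r}X}\!\bigl[\Re F(rW)\bigr],
\]
and each $\frac{\delta}{r}X$ is pure, so the Berezin transform at $\frac{\delta}{r}X$ is a completely positive map (equivalently, one appeals to Theorem \ref{funct-calc}, where $\Phi_{f,Y}$ is completely contractive and unital, hence completely positive on the selfadjoint part). Thus every term in the limit is $\geq 0$, and passing to the limit in the operator norm preserves positivity, giving $\Re F(X)\geq 0$.

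The only mildly delicate step is the initial matrix-entry identification, because one has to check four cases ($\omega>_r\gamma$, $\gamma>_r\omega$, $\omega=\gamma$, incomparable) and verify that the weights $\sqrt{b_\gamma^{(m)}}/\sqrt{b_{\alpha\gamma}^{(m)}}$ appearing in \eqref{fi} match exactly those in the definition of $\Gamma_{rF}$; but this is a direct bookkeeping using the formulas already established. Everything else is a positivity-preservation argument together with the mean value theorem.
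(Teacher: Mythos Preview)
Your proof is correct and follows essentially the same route as the paper's: identify the kernel $\Gamma_{rF}$ with the matrix of $F(rW)+F(rW)^*$, deduce the equivalence of kernel positivity with $\Re F(rW)\geq 0$, and then use the mean value property together with positivity of the Berezin transform for the reverse inclusion. The only cosmetic difference is that you invoke formula \eqref{fi} directly, whereas the paper redoes the inner-product computation by hand on finite sums $\sum_{|\beta|\leq q}h_\beta\otimes e_\beta$.
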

\begin{proof}
Assume that $F\in Hol_\cE^+({\bf D}_{f,rad}^m)$ has the representation
$F=\sum_{\alpha\in \FF_n^+} A_{(\alpha)}\otimes Z_\alpha$ and let $h_\beta\in \cE$, for $\beta\in \FF_n^+$ with $|\beta|\leq q$. Note that, using relation \eqref{WbWb}, we have
\begin{equation*}
\begin{split}
&\left<\sum_{s=1}^\infty\sum_{|\alpha|=s} A_{(\alpha)}\otimes r^{|\alpha|} W_\alpha \left(\sum_{|\beta|\leq q} h_\beta\otimes e_\beta\right), \sum_{|\gamma|\leq q} h_\gamma\otimes e_\gamma\right>\\
&\qquad =\sum_{s=1}^\infty\sum_{|\alpha|=s}\left<\sum_{|\beta|\leq q}A_{(\alpha)}h_\beta\otimes r^{|\alpha|} W_\alpha e_\beta, \sum_{|\gamma|\leq q} h_\gamma \otimes e_\gamma\right>\\
&\qquad =
\sum_{\alpha\in \FF_n^+, |\alpha|\geq 1} \sum_{|\beta|, |\gamma|\leq q}r^{|\alpha|}\left<A_{(\alpha)} h_\beta, h_\gamma\right>\left< \frac{\sqrt{b_\beta^{(m)}}}{\sqrt{b_{\alpha \beta}^{(m)}}}e_{\alpha\beta}, e_\gamma\right>\\
&\qquad =
 \sum_{|\beta|, |\gamma|\leq q, \gamma >_r \beta}r^{|\gamma \backslash_r \beta|}\left<A_{(\gamma \backslash_r \beta)} h_\beta, h_\gamma\right>  \frac{\sqrt{b_\beta^{(m)}}}{\sqrt{b_{\gamma}^{(m)}}} \\
 &\qquad =
  \sum_{|\beta|, |\gamma|\leq q, \gamma >_r \beta}
  \left<\Gamma_{rF}(\gamma, \beta) h_\beta, h_\gamma\right>.
\end{split}
\end{equation*}
In a similar manner, one can prove that
$$
\left<\sum_{s=1}^\infty\sum_{|\alpha|=s} A_{(\alpha)}\otimes r^{|\alpha|} W_\alpha^* \left(\sum_{|\beta|\leq q} h_\beta\otimes e_\beta\right), \sum_{|\gamma|\leq q} h_\gamma\otimes e_\gamma\right>
=
\sum_{|\beta|, |\gamma|\leq q, \gamma >_r \beta}
  \left<\Gamma_{rF}(\gamma, \beta) h_\beta, h_\gamma\right>.
$$
Note also that, for any $\beta\in \FF_n$, $\Gamma_{rF}(\beta, \beta)=\Gamma_{rF}(g_0, g_0)=A_{g_0}+A_{g_0}^*$.Taking into account  the relations above, we deduce that
$$
\left<(F(rW)^*+F(rW))\left(\sum_{|\beta|\leq q} h_\beta\otimes e_\beta\right), \sum_{|\gamma|\leq q} h_\gamma\otimes e_\gamma\right>
=
\sum_{|\beta|, |\gamma|\leq q}
  \left<\Gamma_{rF}(\gamma, \beta) h_\beta, h_\gamma\right>.
$$
Consequently, $F(rW)^*+F(rW)\geq 0$ for any $r\in [0,1)$ if and only if $\Gamma_{rF}$ is a positive semidefinite kernel for any $r\in [0,1)$.

 On the other hand, if $X\in {\bf D}^m_{f,rad}(\cH)$, then there is $r\in (0,1)$ such that $X\in r{\bf D}^m_{f}(\cH)$. Due to Theorem \ref{mean-value}, we have
 $$
 F(X)^*+F(X)=\widetilde{\bf B}^{(m)}_{\frac{1}{r}X}\left[F(rW)^*+F(rW)\right].
 $$
 Since the noncommutative Berezin transform is a positive map, we deduce that
 $ F(X)^*+F(X)\geq 0$ for any $X\in {\bf D}^m_{f,rad}(\cH)$ whenever $F(rW)^*+F(rW)\geq 0$ for any $r\in [0,1)$. The converse is obviously  true.
 Putting all these things together we  complete the proof.
\end{proof}

The next result is an analogue of Weierstrass  theorem  for free pluriharmonic functions on the noncommutative domain ${\bf D}_{f, rad}^m(\cH)$.

\begin{theorem} \label{Weier} Let $F_k:{\bf D}_{f, rad}^m(\cH)\to B(\cE)\otimes_{min} B( \cH)$, $k\in \NN$,   be a sequence of free pluriharmonic functions such that, for any  $r\in [0,1)$,
 the sequence $\{F_k(r{W}\}_{k=1}^\infty$
 is convergent in the operator norm topology.
  Then there is a free pluriharmonic  function   $F:{\bf D}_{f, rad}^m(\cH)\to B(\cE)\otimes_{min} B( \cH) $ such that $F_k(r{W})$ converges to $F(r{W})$, as $k\to \infty$,   for
any  $r\in [0,1)$. In particular, $F_k$ converges  to $F$ uniformly on any domain $r{\bf D}_f^m(\cH)$, $r\in [0,1)$.
\end{theorem}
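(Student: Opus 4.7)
The plan is to apply the converse direction of Theorem \ref{mean-value} to the pointwise norm-limit
\[
\varphi(r) := \lim_{k\to\infty} F_k(rW), \qquad r\in[0,1),
\]
which exists by hypothesis. First, by part (a) of Theorem \ref{Toeplitz} and the representation given in Proposition \ref{ser-def}, each $F_k(rW)$ lies in $\boldsymbol{\cA}_\cE({\bf D}_f^m)^* + \boldsymbol{\cA}_\cE({\bf D}_f^m)$, so the norm limit $\varphi(r)$ lies in the closure $\overline{\boldsymbol{\cA}_\cE({\bf D}_f^m)^* + \boldsymbol{\cA}_\cE({\bf D}_f^m)}^{\|\cdot\|}$, which is precisely the codomain required for the converse of Theorem \ref{mean-value}.

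Next, I would verify the compatibility $\varphi(s) = \widetilde{\bf B}^{(m)}_{\frac{s}{t}W}[\varphi(t)]$ for $0 \leq s < t < 1$. Applying the forward direction of Theorem \ref{mean-value} to each free pluriharmonic $F_k$, with the point $X = sW \in t{\bf D}_f^m(F^2(H_n))$ (valid since $s/t<1$), one obtains
\[
F_k(sW) = \widetilde{\bf B}^{(m)}_{\frac{s}{t}W}\bigl[F_k(tW)\bigr].
\]
By Theorem \ref{funct-calc}, $\widetilde{\bf B}^{(m)}_{\frac{s}{t}W}$ extends to a completely contractive linear map on $\cS_\cE$, hence is continuous in the operator norm; letting $k\to\infty$ on both sides passes the identity to $\varphi$. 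The converse direction of Theorem \ref{mean-value} then produces a free pluriharmonic function $F$ on ${\bf D}_{f,\text{rad}}^m(\cH)$ defined by $F(X) := \widetilde{\bf B}^{(m)}_{\frac{1}{r}X}[\varphi(r)]$ for $X\in r{\bf D}_f^m(\cH)$, and satisfying $F(rW) = \varphi(r)$ for every $r\in[0,1)$, which is the main pointwise convergence claim.

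For the uniform convergence on $r{\bf D}_f^m(\cH)$, fix $r\in(0,1)$ (the case $r=0$ reduces to a single point). Applying the mean value property to both $F_k$ and $F$ and subtracting gives
\[
F_k(X) - F(X) = \widetilde{\bf B}^{(m)}_{\frac{1}{r}X}\bigl[F_k(rW) - F(rW)\bigr], \qquad X\in r{\bf D}_f^m(\cH).
\]
Complete contractivity of the extended Berezin transform on $\cS_\cE$ yields $\|F_k(X) - F(X)\| \leq \|F_k(rW) - F(rW)\|$, independent of $X$, and the right-hand side tends to $0$ by construction of $\varphi$.

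The only technical point that must be handled with care is the norm-continuity of $\widetilde{\bf B}^{(m)}_Y$ at arbitrary $Y\in {\bf D}_f^m(\cH)$, including non-pure $Y$, since in both the mean value verification and the uniform convergence argument the kernel point $\frac{s}{t}W$ or $\frac{1}{r}X$ is merely in ${\bf D}_f^m$, not necessarily pure. This is exactly the content of Theorem \ref{funct-calc}, which constructs the extension $\Phi_{f,Y}$ as a norm-limit of pure-case Berezin transforms and establishes its complete contractivity on $\cS_\cE$; once this identification is invoked, no further obstacle arises.
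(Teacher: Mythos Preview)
Your proof is correct and follows essentially the same route as the paper: define $\varphi(r)$ as the norm limit, verify the Berezin-transform compatibility relation by passing the mean value identity for each $F_k$ through the norm-continuous transform, and invoke the converse of Theorem~\ref{mean-value}. The paper handles the uniform convergence slightly differently, using the von Neumann equality $\sup_{X\in r{\bf D}_f^m(\cH)}\|F(X)-F_k(X)\|=\|F(rW)-F_k(rW)\|$ rather than the contractivity inequality, but your argument via Theorem~\ref{funct-calc} is equally valid; note also that $\frac{s}{t}W$ is in fact pure, so your care about non-pure base points is only genuinely needed in the uniform-convergence step.
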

\begin{proof}
Assume that $F_k$ has the representation
$$
F_k(X)= \sum_{|\alpha|\geq 1}  B_{(\alpha)}(k)\otimes X_\alpha^* +
\sum_{|\alpha|\geq 0}  A_{(\alpha)}(k)\otimes X_\alpha, \qquad X\in {\bf D}_{f,rad}^m(\cH),
$$
for some operators $\{A_{(\alpha)}(k)\}_{\alpha\in \FF_n^+}$ and $\{B_{(\alpha)}(k)\}_{\alpha\in
\FF_n^+\backslash \{g_0\}}$, where the convergence is in the operator norm topology.
According to Theorem \ref{Toeplitz}, for any $r\in [0,1)$,  $F_k(rW)$ is  in the operator space
$\boldsymbol{\cA}_\cE({\bf D}_f^m)^* +\boldsymbol{\cA}_\cE({\bf D}_f^m)$.
Define the function  $\varphi:[0,1)\to \overline{\boldsymbol{\cA}_\cE({\bf D}_f^m)^*+\boldsymbol{\cA}_\cE({\bf D}_f^m)}^{\|\cdot\|}$ by setting
\begin{equation}\label{fi-lim}
\varphi(r):=\lim_{k\to \infty} F_k(rW),\qquad r\in [0,1).
\end{equation}
Let $0 \leq r<t<1$ and note that
\begin{equation*}
\begin{split}
\widetilde{\bf B}^{(m)}_{\frac{r}{t}W}[\varphi(t)]&=\lim_{k\to \infty}\widetilde{\bf B}^{(m)}_{\frac{r}{t}W}[F_k(tW)]=\lim_{k\to \infty} F_k(rW)=\varphi(r),
\end{split}
\end{equation*}
where the limits are in the operator norm topology. According to Theorem \ref{mean-value} the map $F:{\bf D}_{f, rad}^m(\cH)\to
 B(\cE)\otimes_{min} B( \cH)$ defined by
 $$
 F(X):=\widetilde{\bf B}^{(m)}_{\frac{1}{r}X}[\varphi(r)],\qquad X\in r{\bf D}_f^m(\cH), r\in (0,1),
$$
  is a
free pluriharmonic function.
 and  $F(rW)=\varphi(r)$ for any $r\in [0,1)$. Using relation \eqref{fi-lim}, we obtain $F(rW)=\lim_{k\to\infty} F_k(rW)$, $r\in [0,1)$.
 Since
 $$
 \sup_{X\in r{\bf D}_f^m(\cH)} \|F(X)-F_k(X)\|= \|F(rW)-F_k(rW)\|,
 $$
 we deduce that $F_k$ converges  to $F$ uniformly on any domain $r{\bf D}_f^m(\cH)$, $r\in [0,1)$.
The proof is complete.
\end{proof}

\begin{corollary}  Let $F_k:{\bf D}_{f, rad}^m(\cH)\to B(\cE)\otimes_{min} B( \cH)$, $k\in \NN$,   be a sequence of free pluriharmonic functions such that
$\{F_k(0)\}$ is a convergent sequence  in the  operator norm topology and
$$F_1\leq F_2\leq\cdots.
$$
Then $F_k$ converges to a free pluriharmonic function on ${\bf D}_{f, rad}^m(\cH)$.
\end{corollary}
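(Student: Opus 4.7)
The plan is to deduce the corollary from the Weierstrass-type Theorem \ref{Weier}: it suffices to show that for every $r \in [0,1)$, the sequence $\{F_k(rW)\}_{k\ge 1}$ converges in operator norm. Replacing $F_k$ by $F_k - F_1$ for every $k$, I assume $F_1 = 0$, so that each $F_k$ is a positive self-adjoint free pluriharmonic function on ${\bf D}_{f,\text{\rm rad}}^m(\cH)$ and $\{F_k(0)\}$ remains norm-convergent in $B(\cE)$. For $k \ge j$, set $\psi_{jk} := F_k - F_j \ge 0$; by the mean value property (Theorem \ref{mean-value}) applied at $X = 0$,
\[
\psi_{jk}(0) \;=\; \widetilde{\bf B}^{(m)}_0\bigl[\psi_{jk}(rW)\bigr] \;=\; F_k(0) - F_j(0),
\]
so $\|\psi_{jk}(0)\| \to 0$ as $j,k \to \infty$.

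The principal task, and the main obstacle, is to establish a Harnack-type operator inequality
\[
\|\psi(rW)\| \;\le\; C(r)\,\|\psi(0)\|, \qquad r \in [0,1),
\]
valid for every positive self-adjoint free pluriharmonic function $\psi$ on ${\bf D}_{f,\text{\rm rad}}^m(\cH)$, with a finite constant $C(r)$ depending only on $r$, $n$, $m$, and $f$. My intended approach combines the Schur-type characterization proved just above with the weighted-creation-operator structure. Writing $\psi = \Re F$ with $F \in Hol_\cE^+({\bf D}_{f,\text{\rm rad}}^m)$, the kernel $\Gamma_{tF}$ is positive semidefinite for every $t \in [0,1)$, with constant diagonal entry $2\psi(0)$; Cauchy--Schwarz on the kernel then bounds each Taylor coefficient of $F$ in norm by $\|\psi(0)\|$ up to a weight $\sqrt{b_\alpha^{(m)}}/t^{|\alpha|}$. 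Combined with the mean value identity $\psi(rW) = \widetilde{\bf B}^{(m)}_{(r/t)W}[\psi(tW)]$ (for $r < t < 1$), the complete positivity and unital contractivity of the noncommutative Berezin transform, and the growth estimate $b_\alpha^{(m)} b_\gamma^{(m)} \le \binom{|\gamma|+m-1}{m-1}b_{\alpha\gamma}^{(m)}$ recalled in Section 2, these coefficient bounds upgrade to the desired operator-norm bound on $\psi(rW)$. In the classical scalar case $n = m = 1$, $f = Z$, this specialises to the familiar Harnack inequality $\|\psi_r\|_\infty \le \frac{1+r}{1-r}\,\psi(0)$ on the Hardy space.

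Granting the Harnack bound, we obtain
\[
\|F_k(rW) - F_j(rW)\| \;=\; \|\psi_{jk}(rW)\| \;\le\; C(r)\,\|F_k(0)-F_j(0)\| \;\longrightarrow\; 0
\]
as $j,k \to \infty$, so $\{F_k(rW)\}$ is Cauchy in operator norm for every $r \in [0,1)$. Theorem \ref{Weier} then supplies a free pluriharmonic function $G$ on ${\bf D}_{f,\text{\rm rad}}^m(\cH)$ with $F_k \to G$ uniformly on each $r{\bf D}_f^m(\cH)$, which, after adding $F_1$ back, is the desired free pluriharmonic limit of the original sequence.
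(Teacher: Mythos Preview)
Your overall strategy coincides with the paper's: reduce to a Harnack-type inequality $\|\psi(rW)\|\le C(r)\|\psi(0)\|$ for positive free pluriharmonic $\psi$, use it to show $\{F_k(rW)\}$ is Cauchy for each $r\in[0,1)$, then invoke Theorem~\ref{Weier}. The paper does exactly this, but simply \emph{cites} the Harnack inequality from \cite{Po-Bohr2} rather than deriving it.

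Where your proposal is incomplete is precisely the part you flag as the ``main obstacle'': the sketch of the Harnack bound is not an argument. The Cauchy--Schwarz step on the positive kernel $\Gamma_{tF}$ does give the coefficient estimate $\|A_{(\alpha)}\|\le 2\sqrt{b_\alpha^{(m)}}\,t^{-|\alpha|}\|\psi(0)\|$, but the remaining ingredients you list do not by themselves assemble into a bound on $\|\psi(rW)\|$. In particular, the mean value identity $\psi(rW)=\widetilde{\bf B}^{(m)}_{(r/t)W}[\psi(tW)]$ together with contractivity of the Berezin transform only yields $\|\psi(rW)\|\le\|\psi(tW)\|$, which is circular; and summing the coefficient bounds termwise via $\left\|\sum_{|\alpha|=k}A_{(\alpha)}\otimes W_\alpha\right\|$ runs into the $n^k$ growth in the number of words, so one needs a genuinely different mechanism (e.g.\ exploiting positivity of $\psi(tW)$ directly rather than only its coefficients) to close the estimate. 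Either supply that argument in full, or---since this is a corollary---do as the paper does and cite the Harnack inequality from \cite{Po-Bohr2}.
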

\begin{proof} We may assume that $F_1\geq 0$, otherwise we take $G_k:=F_k-F_1$, $k\in \NN$. Due to Harnack type inequality for positive free pluriharmonic functions on ${\bf D}_{f,rad}^m(\cH)$ (see \cite{Po-Bohr2}),  if $k\geq q$, then we have
$$
\|F_k(X)-F_q(X)\|\leq \|F_k(0)-F_q(0)\| \frac{1-r}{1+r}
$$
for any $X\in r{\bf D}_f^m(\cH)$. Since $\{F_k(0)\}$ is a Cauchy  sequence in the operator norm, we deduce that $\{F_k\}$ is a uniformly Cauchy sequence on $r{\bf D}_f^m(\cH)$.  Hence $\{F_k(rW)\}$ is a Cauchy sequence and, therefore, convergent in the operator norm topology. Applying Theorem \ref{Weier}, we find  a free pluriharmonic  function   $F:{\bf D}_{f, rad}^m(\cH)\to B(\cE)\otimes_{min} B( \cH) $ such that $F_k(r{W})$ converges to $F(r{W})$, as $k\to \infty$,   for
any  $r\in [0,1)$. In particular, $F_k$ converges  to $F$ uniformly on any domain $r{\bf D}_f^m(\cH)$, $r\in [0,1)$.
The proof is complete.
\end{proof}

 Let $Har_\cE( {\bf D}_{f, rad}^m)$ denote the set of all free pluriharmonic functions  $F:{\bf D}_{f, rad}^m(\cH)\to B(\cE)\otimes_{min} B( \cH)$.
If $F,G\in Har_\cE( {\bf D}_{f, rad}^m)$ and
$0<r<1$, we define
$$
d_r(F,G):=\|F(r{W})-G(r{W})\|.
$$
If  $\cH$ is
an infinite dimensional Hilbert space,  the noncommutative von Neumann inequality for the $n$-tuples in the domain ${\bf D}_f^m(\cH)$ implies
$$
d_r(F,G)=\sup_{{X}\in r{\bf D}_f^m(\cH)}
\|F({X})-G({X})\|.
$$
 Let $\{r_m\}_{m=1}^\infty$ be an increasing sequence  of positive numbers
  convergent  to $1$.
For any $F,G\in Har_\cE( {\bf D}_{f, rad}^m)$, we define
$$
\rho (F,G):=\sum_{k=1}^\infty \left(\frac{1}{2}\right)^k
\frac{d_{r_k}(F,G)}{1+d_{r_k}(F,G)}.
$$
Using standards arguments,
    one can    show  that $\rho$ is a metric
on $Har_\cE( {\bf D}_{f, rad}^m)$.

\begin{theorem}\label{complete-metric}
$\left(Har_\cE( {\bf D}_{f, rad}^m), \rho\right)$  is a complete metric space.
 \end{theorem}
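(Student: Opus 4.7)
The plan is to take a $\rho$-Cauchy sequence $\{F_k\}\subset Har_\cE({\bf D}_{f,rad}^m)$ and build a pluriharmonic limit by first producing norm limits at each radius $rW$ and then invoking the Weierstrass-type theorem (Theorem \ref{Weier}) already established.

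First I would observe that $\rho$-Cauchy implies $d_{r_j}$-Cauchy for every $j$, since each term of the series $\rho$ is dominated by a multiple of the corresponding $d_{r_j}$. Thus for each fixed $r_j$, the sequence $\{F_k(r_jW)\}_k$ is Cauchy in the operator norm of $B(\cE)\otimes_{min}B(F^2(H_n))$, hence convergent. Next, for an arbitrary $r\in[0,1)$, pick $r_j>r$ from the fixed increasing sequence $\{r_m\}\to 1$. Since $F_k\in Har_\cE({\bf D}_{f,rad}^m)$, the mean value property from Theorem \ref{mean-value} gives
\begin{equation*}
F_k(rW)-F_l(rW)=\widetilde{\bf B}^{(m)}_{(r/r_j)W}\bigl[F_k(r_jW)-F_l(r_jW)\bigr],
\end{equation*}
and since the noncommutative Berezin transform is completely contractive, this yields $\|F_k(rW)-F_l(rW)\|\le \|F_k(r_jW)-F_l(r_jW)\|$. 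Consequently $\{F_k(rW)\}_k$ is Cauchy in operator norm for \emph{every} $r\in[0,1)$.

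With norm-convergence of $\{F_k(rW)\}$ at every $r\in[0,1)$ in hand, Theorem \ref{Weier} produces a free pluriharmonic function $F\in Har_\cE({\bf D}_{f,rad}^m)$ such that $F_k(rW)\to F(rW)$ in operator norm for every $r\in[0,1)$. It remains to verify $\rho(F_k,F)\to 0$. For each fixed $j$, the pointwise convergence gives $d_{r_j}(F_k,F)=\|F_k(r_jW)-F(r_jW)\|\to 0$ as $k\to\infty$, hence $\frac{d_{r_j}(F_k,F)}{1+d_{r_j}(F_k,F)}\to 0$. Since each summand in $\rho(F_k,F)$ is bounded by $(1/2)^j$ and the majorant series $\sum (1/2)^j$ converges, a standard dominated convergence argument for series allows me to interchange the limit in $k$ with the sum in $j$, giving $\rho(F_k,F)\to 0$.

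The only nontrivial step is the passage from $\rho$-Cauchy to uniform norm-Cauchyness of $\{F_k(rW)\}$ at every radius $r\in[0,1)$, not just the discrete radii $r_j$; this is exactly where the contractivity of the Berezin transform (equivalently, the mean value property of Theorem \ref{mean-value}) is indispensable, because the $\rho$-metric only records information at the fixed radii $r_j$. Everything else is bookkeeping: completeness of $B(\cE)\otimes_{min}B(F^2(H_n))$ supplies the radial limits, Theorem \ref{Weier} assembles these into a genuine pluriharmonic function, and dominated convergence on the series yields the $\rho$-convergence.
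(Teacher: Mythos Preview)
Your proof is correct and follows essentially the same strategy as the paper: pass from $\rho$-Cauchy to norm-Cauchy at each radius, invoke the Weierstrass-type Theorem \ref{Weier}, and then deduce $\rho$-convergence. Your explicit use of the contractivity of $\widetilde{\bf B}^{(m)}_{(r/r_j)W}$ to extend norm-Cauchyness from the discrete radii $r_j$ to all $r\in[0,1)$ actually fills in a step the paper's proof leaves implicit when applying Theorem \ref{Weier}; one minor wording slip is that the implication ``$\rho$-Cauchy $\Rightarrow$ $d_{r_j}$-Cauchy'' follows because the $j$-th term of $\rho$ \emph{bounds below} a monotone function of $d_{r_j}$ (not because it is dominated by $d_{r_j}$), but the conclusion is of course correct.
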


\begin{proof}
It is easy to see   that  if $\epsilon>0$, then  there exists $\delta>0$
and $N\in \NN$ such that, for any   $F,G\in Har_\cE( {\bf D}_{f, rad}^m)$,
 \ $d_{r_N}(F,G)<\delta\implies \rho(F,G)<\epsilon$.
Conversely, if $\delta>0$ and $N\in \NN$ are fixed, then there is
$\epsilon>0$ such that, for  any $F,G\in Har_\cE( {\bf D}_{f, rad}^m)$,  \ $
\rho(F,G)<\epsilon \implies d_{r_N}(F,G) <\delta.
$

 Let
$\{G_k\}_{k=1}^\infty\subset Har_\cE( {\bf D}_{f, rad}^m)$ be a Cauchy sequence in the
metric $\rho$. A consequence of the remark above  is
that
  $\{G_k(r_N{W})\}_{k=1}^\infty $  is a Cauchy sequence
   in $B(\cE\otimes F^2(H_{n}))$,
   for any
  $N\in \NN$. Consequently, for each $N\in \NN$,
the sequence $\{G_k(r_N{W})\}_{k=1}^\infty$ is
convergent in the operator norm. Using
Theorem \ref{Weier}, we find a free pluriharmonic function
$G\in Har_\cE( {\bf D}_{f, rad}^m)$
   such that
$G_k(r{W})$ converges to $G(r{W})$  for
any $r\in [0,1)$. By the observation made at the beginning
of this proof, we conclude that $\rho(G_k, G)\to 0$, as $k\to\infty$,
which completes the proof.
 \end{proof}

\bigskip

\section{Bounded free pluriharmonic functions and Dirichlet extension problem}

In this section, we characterize the  bounded    pluriharmonic functions on ${\bf D}_{f, rad}^m(\cH)$ as    noncommutative Berezin transforms of  weighted right multi-Toeplitz operators and present a noncommutative version of Dirichlet extension problem.

Let us recall some definitions concerning completely bounded maps
 on operator spaces.
We identify $M_k(B(\cH))$, the set of
$k\times k$ matrices with entries in $B(\cH)$, with
$B( \cH^{(k)})$, where $\cH^{(k)}$ is the direct sum of $k$ copies
of $\cH$.
If $\cX$ is an operator space, i.e., a closed subspace
of $B(\cH)$, we consider $M_k(\cX)$ as a subspace of $M_k(B(\cH))$
with the induced norm.
Let $\cX, \cY$ be operator spaces and $u:\cX\to \cY$ be a linear map. Define
the map
$u_k:M_k(\cX)\to M_k(\cY)$ by
$$
u_k ([x_{ij}]_{k}):=[u(x_{ij})]_{k}.
$$
We say that $u$ is completely bounded   if
$
\|u\|_{cb}:=\sup_{k\ge1}\|u_k\|<\infty.
$
When  $\|u\|_{cb}\leq1$
(resp. $u_k$ is an isometry for any $k\geq1$) then $u$ is completely
contractive (resp.~isometric). We call $u$ completely positive
  if $u_k$ is positive for all $k\geq 1$.
   For more information  on completely bounded (resp.~positive) maps, we refer
 to \cite{Pa-book} and \cite{Pi}.

 A free pluriharmonic
function $G:{\bf D}_{f, rad}^m(\cH)\to B(\cE)\otimes_{min} B(\cH)$ is called bounded if
$$\|G\|:=\sup||G(X)\|<\infty,
$$
where the supremum is taken over all $n$-tuples $X\in {\bf D}_{f, rad}^m(\cH)$ and any Hilbert space $\cH$.
 Due to the noncommutative von Neumann inequality for elements in ${\bf D}_{f, rad}^m(\cH)$,  it is enough to assume, throughout this section,  that the Hilbert space $\cH$  is
separable and infinite dimensional.
Denote by  $Har_\cE^\infty({\bf D}_{f, rad}^m)$  the set of all bounded free
pluriharmonic functions on  ${\bf D}_{f, rad}^m$ with coefficients in
$B(\cE)$, where $\cE$ is a separable Hilbert space.
 For each $k=1,2,\ldots$,
we define the norms $\|\cdot
\|_k:M_k\left(Har_\cE^\infty({\bf D}_{f, rad}^m)\right)\to [0,\infty)$ by
setting
$$
\|[F_{ij}]_k\|_k:= \sup \|[F_{ij}(X)]_k\|,
$$
where the supremum is taken over all $n$-tuples $X\in {\bf D}_{f, rad}^m(\cH)$ and any Hilbert space $\cH$. It is easy to see that the norms
$\|\cdot\|_k$, $k=1,2,\ldots$, determine  an operator space
structure  on $Har_\cE^\infty({\bf D}_{f, rad}^m)$,
 in the sense of Ruan (see e.g. \cite{ER}).

\begin{theorem}\label{bounded}
  If $F:{\bf D}_{f, rad}^m(\cH)\to B(\cE)\otimes_{min}B(\cH)$, then
 the following statements are equivalent:
\begin{enumerate}
\item[(i)] $F$ is a bounded free pluriharmonic function on
${\bf D}_{f, rad}^m(\cH)$;
\item[(ii)]
there exists $\psi\in \overline{\boldsymbol{\cA}_\cE({\bf D}_f^m)^*+\boldsymbol{\cA}_\cE({\bf D}_f^m)}^{SOT}$ such
that \ $F(X)=\widetilde{\bf B}^{(m)}_X[\psi]$ for $X\in {\bf D}_{f, rad}^m(\cH)$,
\end{enumerate}
where $\widetilde{\bf B}^{(m)}_X$ is the noncommutative Berezin transform at $X$.
In this case,
  $\psi=\text{\rm SOT-}\lim\limits_{r\to 1}F(rW).
  $
   Moreover, the map
$$
\Phi:Har^\infty({\bf D}_{f, rad}^m)\to \overline{\boldsymbol{\cA}_\cE({\bf D}_f^m)^*+\boldsymbol{\cA}_\cE({\bf D}_f^m)}^{SOT}\quad
\text{ defined by } \quad \Phi(F):=\psi
$$ is a completely   isometric isomorphism of operator spaces, where $\boldsymbol{\cA}_\cE({\bf D}_f^m):= B(\cE)\otimes_{min}\cA({\bf D}_f^m)$ and  $\cA({\bf D}_f^m)$ is the noncommutative domain
algebra.
\end{theorem}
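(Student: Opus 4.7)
The plan is to prove (i)$\Leftrightarrow$(ii) by bridging bounded pluriharmonic functions (via Proposition \ref{ser-def}) with weighted right multi-Toeplitz operators (via Theorem \ref{Toeplitz} and Corollary \ref{multi}). The crucial fact in both directions is that every $X\in{\bf D}_{f,\text{\rm rad}}^m(\cH)$ is pure: $X\in t{\bf D}_f^m(\cH)$ for some $t<1$, so the Berezin kernel $K_{f,X}^{(m)}$ is an isometry and the intertwining identity $K_{f,X}^{(m)}X_i^*=(W_i^*\otimes I)K_{f,X}^{(m)}$ yields $\widetilde{\bf B}^{(m)}_X[W_\alpha]=X_\alpha$ and $\widetilde{\bf B}^{(m)}_X[W_\alpha^*]=X_\alpha^*$.

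For (i)$\Rightarrow$(ii), I would apply the pluriharmonic representation of $F$ at $X=rW$ and, using $W_\alpha(1)=\frac{1}{\sqrt{b_\alpha^{(m)}}}e_\alpha$, derive from $\|F(rW)(x\otimes 1)\|\leq\|F\|\|x\|$ that $\sum_\alpha \frac{r^{2|\alpha|}}{b_\alpha^{(m)}}\|A_{(\alpha)}x\|^2\leq\|F\|^2\|x\|^2$; letting $r\nearrow 1$ gives WOT-convergence of $\sum \frac{1}{b_\alpha^{(m)}}A_{(\alpha)}^*A_{(\alpha)}$. A symmetric computation on $F(rW)^*(x\otimes 1)$ handles the $B_{(\alpha)}$ coefficients. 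Both hypotheses of Theorem \ref{Toeplitz} are then satisfied, producing $\psi$ with $\psi=\text{\rm SOT-}\lim_{r\to 1}F(rW)$, and Corollary \ref{multi} places $\psi$ in the required SOT-closure.

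For (ii)$\Rightarrow$(i), I would set $F(X):=\widetilde{\bf B}^{(m)}_X[\psi]$. The identity $\widetilde{\bf B}^{(m)}_{rW}[\psi]=\varphi(rW)$ established in the proof of Theorem \ref{Toeplitz} ensures $F(rW)$ has the expected pluriharmonic Fourier form. For a general $X\in t{\bf D}_f^m(\cH)$ with $t<1$, the noncommutative von Neumann inequality applied to the homogeneous pieces $P_k(W):=\sum_{|\alpha|=k}A_{(\alpha)}\otimes W_\alpha$ gives $\|P_k(X)\|\leq\|P_k(tW)\|$, and the right-hand side is the $k$-th summand of the norm-convergent series defining $\varphi(tW)$. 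Hence the pluriharmonic series for $F$ converges in operator norm at $X$, matching the Fourier data of $\psi$. SOT-continuity of the Berezin transform on bounded sets (valid since $K_{f,X}^{(m)}$ is an isometry), combined with $\psi=\text{\rm SOT-}\lim_{r\to 1}\varphi(rW)$ from Theorem \ref{Toeplitz}, then identifies $F(X)$ with this pluriharmonic sum and yields $\|F\|\leq\|\psi\|$.

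Finally, $\Phi(F):=\psi$ is bijective by what precedes. The identities $\|F\|=\sup_X\|\widetilde{\bf B}^{(m)}_X[\psi]\|\leq\|\psi\|$ (contractivity of the Berezin transform) and $\|\psi\|=\sup_r\|F(rW)\|\leq\|F\|$ (since $rW\in{\bf D}_{f,\text{\rm rad}}^m(F^2(H_n))$) give the isometry; the same argument applied to amplifications $[F_{ij}]_{k\times k}$, using that $\widetilde{\bf B}^{(m)}_X$ is completely contractive, upgrades this to a complete isometry of operator spaces. The principal obstacle is the step in (ii)$\Rightarrow$(i) of passing from the Toeplitz description of $\psi$ to norm-convergent pluriharmonic representations at general $X\in{\bf D}_{f,\text{\rm rad}}^m(\cH)$; this is exactly where the pureness of $X$ (giving an isometric Berezin kernel) and the noncommutative von Neumann inequality (transferring norm bounds from $W$ to $X$) jointly become essential.
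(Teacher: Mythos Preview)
Your proposal is correct and follows essentially the same route as the paper: both directions pivot on Theorem \ref{Toeplitz} and Corollary \ref{multi}, using the isometric Berezin kernel at pure points and SOT-continuity of the Berezin transform on bounded sets to pass between $F(rW)$ and $\psi$. The only notable difference is that you explicitly verify hypothesis (i) of Theorem \ref{Toeplitz} by evaluating $F(rW)(x\otimes 1)$ and letting $r\nearrow 1$, whereas the paper invokes that theorem directly without isolating this step; conversely, the paper spells out the identification $F(X)=\widetilde{\bf B}^{(m)}_X[\psi]$ in (i)$\Rightarrow$(ii) via $F(rX)=\widetilde{\bf B}^{(m)}_X[F(rW)]$ and an SOT-limit, which you leave implicit.
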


\begin{proof}
Let $F\in {\bf D}_{f, rad}^m(\cH)$ and note that, due to Proposition \ref{ser-def}, it has a representation
\begin{equation*} F(X)=\sum_{k=1}^\infty \sum_{|\alpha|=k} B_{(\alpha)}\otimes
X_\alpha^* +A_{(0)}\otimes  I+ \sum_{k=1}^\infty
 \sum_{|\alpha|=k} A_{(\alpha)}\otimes  X_\alpha,
\end{equation*}
where the series are convergent in the  operator norm topology
  for any $X\in {\bf D}_{f, rad}^m(\cH)$. Consequently, we have
  $ F(rW)\in \boldsymbol{\cA}_\cE({\bf D}_f^m)^* +\boldsymbol{\cA}_\cE({\bf D}_f^m)$ for any $r\in [0,1)$, and $\sup_{r\in [0,1)}\|F(rW)\|<\infty$.
  Applying Theorem \ref{Toeplitz}, we find a unique weighted right multi-Toeplitz operator $T\in B(\cE\otimes F^2(H_n))$ such that
  \begin{equation}
  \label{TT}
  T=\text{\rm SOT-}\lim_{r\to 1}F(rW) \quad \text{ and }\quad \|T\|=\sup_{r\in [0,1)}\|F(rW)\|.
  \end{equation}
 Therefore,  $T\in \overline{\boldsymbol{\cA}_\cE({\bf D}_f^m)^*+\boldsymbol{\cA}_\cE({\bf D}_f^m)}^{SOT}$. Now, we prove that
  \ $F(X)=\widetilde{\bf B}^{(m)}_X[T]$ for $X\in {\bf D}_{f, rad}^m(\cH)$.
 Indeed,  since $ F(rW)\in \boldsymbol{\cA}_\cE({\bf D}_f^m)^* +\boldsymbol{\cA}_\cE({\bf D}_f^m)$, we have
 $$
 F(rX)=(I_\cE\otimes K_{f,X}^{(m)})^*(F(rW)\otimes I_\cH)(I_\cE\otimes  K_{f,X}^{(m)})
 $$
 for $X\in {\bf D}_{f, rad}^m(\cH)$ and $r\in [0,1)$. Since the map $Y\mapsto Y\otimes I$ is SOT-continuous on bounded sets, we use relation \eqref{TT} to deduce that
  $\text{\rm SOT-}\lim_{r\to 1}F(rX)=\widetilde{\bf B}^{(m)}_X[T]$ for $X\in {\bf D}_{f, rad}^m(\cH)$. On the other hand, since $F$ is continuous on $ {\bf D}_{f, rad}^m(\cH)$ with respect to the operator norm topology, we conclude that $F(X)=\widetilde{\bf B}^{(m)}_X[T]$ for $X\in {\bf D}_{f, rad}^m(\cH)$, which shows that item (ii) holds.
  Conversely, assume that (ii) holds. Then
   $F(X)=\widetilde{\bf B}^{(m)}_X[\psi]$ for any $X\in {\bf D}_{f, rad}^m(\cH)$ and some $\psi\in \overline{\boldsymbol{\cA}_\cE({\bf D}_f^m)^*+\boldsymbol{\cA}_\cE({\bf D}_f^m)}^{SOT}$. Due to Corollary \ref{multi}, $\psi$ is a weighted right multi-Toeplitz operator on $\cE\otimes F^2(H_n)$. Applying Theorem \ref{Toeplitz},  we find be two sequences $\{A_{(\alpha)}\}_{\alpha\in \FF_n^+}$ and $\{B_{(\alpha)}\}_{\alpha\in
\FF_n^+\backslash \{g_0\}}$ of  operators on a Hilbert space $\cE$ such that,
   $\psi=\text{\rm SOT-}\lim_{r\to \infty} G(rW)$,
   where
   $$
   G(rW):=\sum_{k=1}^\infty \sum_{|\alpha|=k}
B_{(\alpha)}\otimes  r^{|\alpha|} W_\alpha^* + A_{(0)}\otimes I
 +\sum_{k=1}^\infty
\sum_{|\alpha|=k} A_{(\alpha)}\otimes r^{|\alpha|} W_\alpha,
$$
with the convergence is in the operator norm topology. Moreover, we have
$\sup_{r\in [0,1)}\|G(rW)\|=\|\psi\|$.
Define the bounded free pluriharmonic function $G:{\bf D}_{f, rad}^m(\cH)\to B(\cE)\otimes_{min}B(\cH)$ by setting
\begin{equation*} G(X)=\sum_{k=1}^\infty \sum_{|\alpha|=k} B_{(\alpha)}\otimes
X_\alpha^* +A_{(0)}\otimes  I+ \sum_{k=1}^\infty
 \sum_{|\alpha|=k} A_{(\alpha)}\otimes  X_\alpha, \qquad, X\in {\bf D}_{f, rad}^m(\cH),
\end{equation*}
where the series are convergent in the  operator norm topology. Note that
\begin{equation*}
\begin{split}
\widetilde{\bf B}^{(m)}_X[\psi]&= \text{SOT-}\lim_{r\to 1}\widetilde{\bf B}^{(m)}_X[G(rW)]\\
&= \text{SOT-}\lim_{r\to 1}\left(\sum_{k=1}^\infty \sum_{|\alpha|=k} r^{|\alpha|}B_{(\alpha)}\otimes
X_\alpha^* +A_{(0)}\otimes  I+ \sum_{k=1}^\infty
 \sum_{|\alpha|=k} A_{(\alpha)}\otimes r^{|\alpha|} X_\alpha\right)\\
 &=\text{SOT-}\lim_{r\to 1}G(rX)=G(X),
\end{split}
\end{equation*}
where the last equality is due to the continuity of $G$. Therefore, $G(X)=F(X)$ for
any $ X\in {\bf D}_{f, rad}^m(\cH)$.

Now, let $[F_{ij}]_{k\times k}$ be a $k\times k$ matrix with entries in $Har_\cE^\infty({\bf D}_{f, rad}^m)$. As in the case when $k=1$, we can use the noncommutative von Neumann inequality for  the domain ${\bf D}_{f}^{m}$,  to show that
\begin{equation*}
\|[F_{ij}]_{k\times k}\|=\sup_{r\in [0,1)}\|[F_{ij}(rW)]_{k\times k}\|
\end{equation*}
and that $T_{ij}:=\text{\rm SOT-}\lim_{r\to 1}F_{ij}(rW)$ are weighted right multi-Toeplitz operators.
Since
$$
(I_\cE\otimes K_{f,rW}^{(m)})^*(T_{ij}\otimes I_\cH)(I_\cE\otimes  K_{f,rW}^{(m)})=F_{ij}(rW),\qquad r\in [0,1),
$$
we deduce that $\|[F_{ij}(rW)]_{k\times k}\|\leq \|[T_{ij}]_{k\times k}\|$, $r\in [0,1)$, which, due to the convergence above, implies $\|[F_{ij}(rW)]_{k\times k}\|= \|[T_{ij}]_{k\times k}\|$. This completes the proof.
\end{proof}

A consequence of Theorem \ref{bounded} and  Corollary \ref{multi} is the following noncommutative version of Herglotz theorem (see \cite{Her}, \cite{H}).
\begin{corollary} Any non-negative bounded free pluriharmonic function on ${\bf D}_{f,rad}^m$ is the Berezin transform of a positive weighted right  multi-Toeplitz operator on $\cE\otimes F^2(H_n)$.

\end{corollary}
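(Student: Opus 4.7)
The plan is to combine Theorem \ref{bounded} with Corollary \ref{multi}, plus the elementary observation that the SOT-limit of positive operators is positive. Let $F\in Har_\cE^\infty({\bf D}_{f,rad}^m)$ satisfy $F(X)\geq 0$ for every $X\in {\bf D}_{f,rad}^m(\cH)$ and every Hilbert space $\cH$. Theorem \ref{bounded} produces a unique
$$
\psi\in \overline{\boldsymbol{\cA}_\cE({\bf D}_f^m)^*+\boldsymbol{\cA}_\cE({\bf D}_f^m)}^{SOT}
$$
such that $\psi=\text{SOT-}\lim_{r\to 1}F(rW)$ and $F(X)=\widetilde{\bf B}^{(m)}_X[\psi]$ for every $X\in {\bf D}_{f,rad}^m(\cH)$.

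Next, I would invoke Corollary \ref{multi}, according to which the set of all weighted right multi-Toeplitz operators on $\cE\otimes F^2(H_n)$ coincides with $\overline{\boldsymbol{\cA}_\cE({\bf D}_f^m)^*+\boldsymbol{\cA}_\cE({\bf D}_f^m)}^{SOT}$. Hence $\psi$ is automatically a weighted right multi-Toeplitz operator, and it remains only to verify its positivity.

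For positivity, recall that the universal model satisfies $W\in {\bf D}_f^m(F^2(H_n))$, so for each $r\in [0,1)$ the scaled tuple $rW$ lies in ${\bf D}_{f,rad}^m(F^2(H_n))$. The hypothesis $F\geq 0$ then gives $F(rW)\geq 0$. Since $\psi$ is the SOT-limit of the net $\{F(rW)\}_{r\to 1}$, for any $h\in \cE\otimes F^2(H_n)$ we have
$$
\langle \psi h,h\rangle = \lim_{r\to 1}\langle F(rW)h,h\rangle \geq 0,
$$
which forces $\psi=\psi^*$ and $\psi\geq 0$. Combining these three pieces yields the claim: $F$ is the noncommutative Berezin transform of the positive weighted right multi-Toeplitz operator $\psi$.

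There is essentially no obstacle here; the corollary is a direct packaging of Theorem \ref{bounded} (which identifies $\psi$ and encodes $F$ as $\widetilde{\bf B}^{(m)}_\cdot[\psi]$), Corollary \ref{multi} (which identifies $\psi$ as a multi-Toeplitz operator), and the trivial preservation of positivity under SOT-limits. The only point worth a line of justification is that evaluating $F$ at $rW$, rather than at a generic element of the abstract domain, is legitimate because $W$ serves as the universal model and lies in ${\bf D}_f^m(F^2(H_n))$.
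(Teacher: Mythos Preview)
Your proof is correct and follows precisely the approach the paper indicates: the paper states the corollary as a direct consequence of Theorem \ref{bounded} and Corollary \ref{multi} without giving a separate argument, and your write-up fills in exactly those details (including the routine observation that positivity passes to the SOT-limit $\psi=\text{SOT-}\lim_{r\to 1}F(rW)$).
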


\begin{corollary} If $F:{\bf D}_{f, rad}^m(\cH)\to B(\cE)\otimes_{min}B(\cH)$ is  a bounded free pluriharmonic function and  $Y\in {\bf D}_f^m(\cH)$ is a pure $n$-tuple of operators, then $\lim_{r\to 1} F(rY)$ exists in the strong operator topology.
\end{corollary}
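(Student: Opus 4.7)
The plan is to reduce the statement to Theorem~\ref{bounded} and exploit the fact that purity of $Y$ turns the noncommutative Berezin kernel into an honest isometry. By Theorem~\ref{bounded}, there is a weighted right multi-Toeplitz operator
$$
\psi=\text{SOT-}\lim_{r\to 1}F(rW)\in \overline{\boldsymbol{\cA}_\cE({\bf D}_f^m)^*+\boldsymbol{\cA}_\cE({\bf D}_f^m)}^{SOT}
$$
with $F(X)=\widetilde{\bf B}^{(m)}_X[\psi]$ for every $X\in {\bf D}_{f,\text{\rm rad}}^m(\cH)$ and with the uniform bound $\sup_{r\in[0,1)}\|F(rW)\|=\|\psi\|<\infty$. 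Since $Y$ is pure, $K_{f,Y}^{(m)}$ is an isometry, so the formula
$$
\widetilde{\bf B}_Y^{(m)}[T]:=(I_\cE\otimes K_{f,Y}^{(m)*})(T\otimes I_\cH)(I_\cE\otimes K_{f,Y}^{(m)})
$$
is well-defined directly on all of $B(\cE\otimes F^2(H_n))$, with no auxiliary radial limit.

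The main identity I would verify is $F(rY)=\widetilde{\bf B}^{(m)}_Y[F(rW)]$ for each fixed $r\in[0,1)$. For this, I combine the norm-convergent expansion of $F(rW)$ provided by Proposition~\ref{ser-def} with the intertwining relation $K_{f,Y}^{(m)}Y_i^*=(W_i^*\otimes I)K_{f,Y}^{(m)}$ and the isometric identity $K_{f,Y}^{(m)*}K_{f,Y}^{(m)}=I$. These imply termwise that $\widetilde{\bf B}^{(m)}_Y[A\otimes W_\alpha]=A\otimes Y_\alpha$ and $\widetilde{\bf B}^{(m)}_Y[A\otimes W_\alpha^*]=A\otimes Y_\alpha^*$; norm-continuity of the completely contractive map $\widetilde{\bf B}^{(m)}_Y$ then produces the corresponding norm-convergent expansion of $F(rY)$, yielding the identity.

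To finish, note that $\{F(rW)\otimes I_\cH\}_{r\in[0,1)}$ is norm-bounded by $\|\psi\|$ and SOT-converges to $\psi\otimes I_\cH$ as $r\to 1$. Since left and right multiplication by the fixed bounded operators $I_\cE\otimes K_{f,Y}^{(m)*}$ and $I_\cE\otimes K_{f,Y}^{(m)}$ is SOT-continuous on norm-bounded sets, I conclude
$$
\text{SOT-}\lim_{r\to 1}F(rY)=\text{SOT-}\lim_{r\to 1}\widetilde{\bf B}^{(m)}_Y[F(rW)]=\widetilde{\bf B}^{(m)}_Y[\psi],
$$
proving existence of the SOT-limit. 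The main obstacle is precisely transferring the SOT-limit through the two-sided conjugation; this is why the uniform norm bound supplied by Theorem~\ref{bounded} is essential and why purity of $Y$ cannot be dropped — without purity, $\widetilde{\bf B}^{(m)}_Y[\psi]$ would itself be defined only through a further radial limit and the interchange of limits would no longer be routine.
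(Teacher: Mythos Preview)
Your proof is correct and follows essentially the same route as the paper: invoke Theorem~\ref{bounded} to obtain the multi-Toeplitz operator $\psi$ together with the uniform norm bound, identify $F(rY)=\widetilde{\bf B}^{(m)}_Y[F(rW)]$ via the intertwining relation and the isometric property of $K_{f,Y}^{(m)}$, and then pass to the SOT-limit using boundedness. The paper phrases the last step via SOT-continuity of $A\mapsto A\otimes I$ on bounded sets rather than of two-sided multiplication, but this is the same argument.
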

\begin{proof}
Assume that $F$ has the representation
\begin{equation*} F(X)=\sum_{k=1}^\infty \sum_{|\alpha|=k} B_{(\alpha)}\otimes
X_\alpha^*  + \sum_{k=0}^\infty
 \sum_{|\alpha|=k} A_{(\alpha)}\otimes  X_\alpha,\qquad X\in {\bf D}_{f, rad}^m(\cH),
\end{equation*}
where the series are convergent in the  operator norm topology. Due to Theorem \ref{bounded}, we find a unique weighted right multi-Toeplitz operator $T\in B(\cE\otimes F^2(H_n))$ such that
  \begin{equation}
  \label{TT2}
  T=\text{\rm SOT-}\lim_{r\to 1}F(rW) \quad \text{ and }\quad \|T\|=\sup_{r\in [0,1)}\|F(rW)\|.
  \end{equation}
Let  $Y\in {\bf D}_f^m(\cH)$ be  a pure $n$-tuple of operators and let $r\in [0,1)$.
Then we have
\begin{equation*}
\begin{split}
F(rY)&=\sum_{k=1}^\infty \sum_{|\alpha|=k} B_{(\alpha)}\otimes
r^{|\alpha|}Y_\alpha^*  + \sum_{k=0}^\infty
 \sum_{|\alpha|=k} A_{(\alpha)}\otimes r^{|\alpha|} Y_\alpha\\
 &={\bf B}_Y^{(m)}[F(rW)]\\
 &=(I_\cE\otimes K_{f,Y}^{(m)})^*(F(rW)\otimes I_\cH)(I_\cE\otimes  K_{f,Y}^{(m)}),
\end{split}
\end{equation*}
where the convergence of the series is in the operator norm topology.
Consequently, since the map $A\mapsto A\otimes I$ is SOT-continuous on bounded sets, relation \eqref{TT2} implies that $\text{\rm SOT-}\lim_{r\to 1} F(rY)$ exists and it is equal to $(I_\cE\otimes K_{f,Y}^{(m)})^*(T\otimes I_\cH)(I_\cE\otimes  K_{f,Y}^{(m)})$. The proof is complete.
\end{proof}

\begin{corollary} \label{mean-bounded} Given a function $F:{\bf D}_{f, rad}^m(\cH)\to B(\cE)\otimes_{min}B(\cH)$, the following statements are equivalent:
\begin{enumerate}
\item[(i)] $F$ is a bounded free plurihamonic function.

\item[(ii)] There is a bounded function $\varphi:[0,1)\to \overline{\boldsymbol{\cA}_\cE({\bf D}_f^m)^*+\boldsymbol{\cA}_\cE({\bf D}_f^m)}^{\|\cdot\|}$  which satisfies the relation
$$
\varphi(r)=\widetilde{\bf B}^{(m)}_{\frac{r}{t}W}[\varphi(t)],\qquad \text{for any }\  0\leq r<t<1,
$$
and $F(X):=\widetilde{\bf B}^{(m)}_{\frac{1}{r}X}[\varphi(r)]$ for any  $X\in r{\bf D}_f^m(\cH)$ and $ r\in (0,1)$.
\end{enumerate}
Moreover, $F$ and $\varphi$ uniquely determine each other and $F(rW)=\varphi(r)$ for any $r\in [0,1)$.
\end{corollary}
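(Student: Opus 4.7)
The plan is to derive this corollary by layering the mean value characterization of free pluriharmonic functions (Theorem \ref{mean-value}) on top of the Toeplitz characterization of bounded ones (Theorem \ref{bounded}). The structural equivalence is already in place; only the boundedness conditions need to be matched across the two directions.

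For (i) $\Rightarrow$ (ii), I would define $\varphi(r):=F(rW)$ for $r\in[0,1)$. Since $F$ is a free pluriharmonic function, Theorem \ref{Toeplitz}(a) (invoked via the argument in the proof of Theorem \ref{bounded}) shows that $\varphi(r)\in \boldsymbol{\cA}_\cE({\bf D}_f^m)^*+\boldsymbol{\cA}_\cE({\bf D}_f^m)$, which certainly lies in the norm closure. The mean value identity $\varphi(r)=\widetilde{\bf B}^{(m)}_{(r/t)W}[\varphi(t)]$ and the reconstruction formula $F(X)=\widetilde{\bf B}^{(m)}_{(1/r)X}[\varphi(r)]$ for $X\in r{\bf D}_f^m(\cH)$ are both immediate from Theorem \ref{mean-value}. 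The boundedness of $\varphi$ is automatic: $\|\varphi(r)\|=\|F(rW)\|\le \|F\|<\infty$.

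For (ii) $\Rightarrow$ (i), Theorem \ref{mean-value} directly delivers a free pluriharmonic function $F(X)=\widetilde{\bf B}^{(m)}_{(1/r)X}[\varphi(r)]$ satisfying $F(rW)=\varphi(r)$. The only new ingredient is boundedness. For arbitrary $X\in {\bf D}^m_{f,rad}(\cH)$ pick $r\in(0,1)$ with $X\in r{\bf D}_f^m(\cH)$, so $(1/r)X\in {\bf D}_f^m(\cH)$. Theorem \ref{funct-calc} provides a unital completely contractive extension of the noncommutative Berezin transform to the norm closure $\overline{\boldsymbol{\cA}_\cE({\bf D}_f^m)^*+\boldsymbol{\cA}_\cE({\bf D}_f^m)}^{\|\cdot\|}$, which yields
\[
\|F(X)\|=\bigl\|\widetilde{\bf B}^{(m)}_{(1/r)X}[\varphi(r)]\bigr\|\le \|\varphi(r)\|\le \sup_{s\in[0,1)}\|\varphi(s)\|<\infty.
\]
Hence $F$ is bounded with $\|F\|=\sup_{s\in[0,1)}\|\varphi(s)\|$.

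Uniqueness of the correspondence is then essentially by construction: one direction forces $\varphi(r)=F(rW)$, and conversely $\varphi$ determines $F$ through the Berezin formula, with $F(rW)=\varphi(r)$ following either from the last assertion of Theorem \ref{mean-value} or by unwinding the definition of the Berezin kernel $K_{f,W}^{(m)}$, which, since $(id-\Phi_{f,W})^m(I)=P_\CC$, implements the canonical unitary identification $F^2(H_n)\simeq F^2(H_n)\otimes\CC$. No step presents a genuine obstacle: the corollary is a repackaging of Theorems \ref{mean-value}, \ref{funct-calc}, and \ref{bounded}, with the only mild point being the appeal to Theorem \ref{funct-calc} to extend $\widetilde{\bf B}^{(m)}_{(1/r)X}$ contractively to the norm closure of the Toeplitz operator space, which is exactly where boundedness of $F$ is transferred from boundedness of $\varphi$.
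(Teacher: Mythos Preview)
Your proposal is correct and follows essentially the same approach as the paper: define $\varphi(r):=F(rW)$ for (i)$\Rightarrow$(ii) and invoke Theorem \ref{mean-value} for (ii)$\Rightarrow$(i), with boundedness in each direction coming from the contractivity of the Berezin transform. The only cosmetic difference is that for (ii)$\Rightarrow$(i) the paper writes the bound as the bare inequality $\|F\|\le\sup_{r\in[0,1)}\|F(rW)\|$ (the noncommutative von Neumann inequality), whereas you route it explicitly through Theorem \ref{funct-calc}; these are the same fact.
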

\begin{proof}
Assume that $F$ is a bounded free pluriharmonic function and has representation
\begin{equation*} F(X)=\sum_{k=1}^\infty \sum_{|\alpha|=k} B_{(\alpha)}\otimes
X_\alpha^*  + \sum_{k=0}^\infty
 \sum_{|\alpha|=k} A_{(\alpha)}\otimes  X_\alpha,\qquad X\in {\bf D}_{f, rad}^m(\cH),
\end{equation*}
where the series are convergent in the  operator norm topology. Then $\sup_{r\in [0,1)} \|F(rW)\|<\infty$ and
$$
F(rW)=\widetilde{\bf B}_{\frac{r}{t}W}^{(m)}[F(tW)],\qquad 0\leq r<t<1.
$$
Define $\varphi:[0,1)\to \overline{\boldsymbol{\cA}_\cE({\bf D}_f^m)^*+\boldsymbol{\cA}_\cE({\bf D}_f^m)}^{\|\cdot\|}$ by setting $\varphi(r):=F(rW)$.
Note that, if $X\in r{\bf D}_f^m(\cH)$, then
\begin{equation*}
\begin{split}
F(X)&=\lim_{\delta\to 1}\left(\sum_{k=1}^\infty \sum_{|\alpha|=k} B_{(\alpha)}\otimes
\delta^{|\alpha|} X_\alpha^*  + \sum_{k=0}^\infty
 \sum_{|\alpha|=k} A_{(\alpha)}\otimes  \delta^{|\alpha|}X_\alpha\right)\\
 &=\lim_{\delta\to 1} \widetilde{\bf B}^{(m)}_{\frac{\delta}{r}X}[\varphi(r)]=\widetilde{\bf B}^{(m)}_{\frac{1}{r}X}[\varphi(r)].
\end{split}
\end{equation*}
Conversely, assume that item (ii) holds. Applying Theorem \ref{mean-value} to $\varphi$, we deduce that $F$ is a free pluriharmonic function and $F(rW)=\varphi(r)$ for any $r\in [0,1)$. Since $\varphi$ is bounded, we also have $\|F\|\leq \sup_{r\in [0,1)} \|F(rW)\|<\infty$. This completes the proof.
\end{proof}

We denote by $Har_\cE^c({\bf D}_{f, rad}^m)$ the set of all
  free pluriharmonic functions on ${\bf D}_{f, rad}^m(\cH)$ with operator-valued coefficients in $B(\cE)$, which
 have continuous extensions   (in the operator norm topology) to
the domain ${\bf D}_f^m(\cH)$. Here is our noncommutative version of the Dirichlet extension problem for harmonic functions \cite{H}.

\begin{theorem}\label{Dirichlet}  If $F:{\bf D}_{f, rad}^m(\cH)\to B(\cE)\otimes_{min} B( \cH)$, then
 the following statements are equivalent:
\begin{enumerate}
\item[(i)] $F$ is a free pluriharmonic function on ${\bf D}_{f, rad}^m(\cH)$ which
 has a continuous extension  (in the operator norm topology) to
the domain ${\bf D}_f^m(\cH)$;

\item[(ii)] $F$ is a free pluriharmonic function on ${\bf D}_{f, rad}^m(\cH)$
such that \ $F(rW)$ converges in the operator norm
topology, as $r\to 1$.
\item[(iii)]
there exists $\psi\in \overline{\boldsymbol{\cA}_\cE({\bf D}_f^m)^*+\boldsymbol{\cA}_\cE({\bf D}_f^m)}^{\|\cdot\|}$ such that
$F(X)=\widetilde{\bf B}^{(m)}_X[\psi]$ for $X\in {\bf D}_{f, rad}^m(\cH)$;
\end{enumerate}
In this case, $  \psi=\lim\limits_{r\to 1}F(rW),$ where
the convergence is in the operator norm. Moreover, the map $
\Phi:Har_\cE^c({\bf D}_{f, rad}^m)\to
\overline{\boldsymbol{\cA}_\cE({\bf D}_f^m)^*+\boldsymbol{\cA}_\cE({\bf D}_f^m)}^{\|\cdot\|}\quad \text{ defined
by } \quad \Phi(F):=\psi $ is a  completely   isometric isomorphism of
operator spaces.
\end{theorem}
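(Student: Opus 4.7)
The plan is to prove the chain $(i)\Rightarrow(ii)\Rightarrow(iii)\Rightarrow(i)$, and then read off the completely isometric isomorphism from matricial versions of the preceding structural results. For $(i)\Rightarrow(ii)$, I would observe that the universal model $W=(W_1,\ldots,W_n)$ lies in ${\bf D}_f^m(F^2(H_n))$ and that $rW\to W$ componentwise in operator norm as $r\to 1$; any continuous extension of $F$ to ${\bf D}_f^m(F^2(H_n))$ therefore forces $F(rW)\to F(W)$ in the operator norm topology.

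For $(ii)\Rightarrow(iii)$, norm convergence of $\{F(rW)\}_{r\in[0,1)}$ ensures $\sup_r\|F(rW)\|<\infty$, so $F$ is a bounded free pluriharmonic function. Theorem \ref{bounded} then supplies $\psi_0\in\ol{\boldsymbol{\cA}_\cE({\bf D}_f^m)^*+\boldsymbol{\cA}_\cE({\bf D}_f^m)}^{SOT}$ with $\psi_0=\text{SOT-}\lim_{r\to 1}F(rW)$ and $F(X)=\widetilde{\bf B}^{(m)}_X[\psi_0]$. Since norm convergence implies SOT convergence, the norm-limit $\psi$ coincides with $\psi_0$; and because each $F(rW)$ already lies in $\boldsymbol{\cA}_\cE({\bf D}_f^m)^*+\boldsymbol{\cA}_\cE({\bf D}_f^m)$ by Theorem \ref{Toeplitz}(a), the limit $\psi$ belongs to the norm closure of that set.

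The substantive direction is $(iii)\Rightarrow(i)$. Given $\psi$ in the norm closure, I would define the candidate extension $\widetilde F(X):=\Phi_{f,X}(\psi)$ on ${\bf D}_f^m(\cH)$, where $\Phi_{f,X}$ is the unital completely contractive functional calculus on $\cS_\cE$ furnished by Theorem \ref{funct-calc}. For $X\in{\bf D}_{f,rad}^m(\cH)$ the $n$-tuple $X$ is pure (since $X=sY$ with $s\in[0,1)$ and $Y\in{\bf D}_f^m(\cH)$), hence $\Phi_{f,X}(\psi)=\widetilde{\bf B}^{(m)}_X[\psi]=F(X)$, so $\widetilde F$ genuinely extends $F$. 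Norm continuity on ${\bf D}_f^m(\cH)$ is then established by a three-epsilon argument: given $\epsilon>0$, approximate $\psi$ in norm by a polynomial $p=p_1+p_2^*$ in $W$ and $W^*$ with $B(\cE)$-coefficients, use the uniform contractivity $\|\Phi_{f,X}(\psi-p)\|\leq\|\psi-p\|$ to control the tail, and note that $X\mapsto\Phi_{f,X}(p)=p_1(X)+p_2(X)^*$ is manifestly norm continuous on ${\bf D}_f^m(\cH)$.

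For the final statement, uniqueness of $\psi$ is immediate from $\psi=\lim_{r\to 1}F(rW)$; the completely isometric character of $\Phi$ then follows by applying $(i)\Leftrightarrow(iii)$ entrywise to $[F_{ij}]\in M_k(Har_\cE^c({\bf D}_{f,rad}^m))$ and invoking the matricial form of Theorem \ref{Toeplitz}(c) together with the noncommutative von Neumann inequality on ${\bf D}_f^m(\cH)$ to obtain $\|[F_{ij}]\|_k=\sup_X\|[F_{ij}(X)]\|=\lim_{r\to 1}\|[F_{ij}(rW)]\|=\|[\psi_{ij}]\|$, while surjectivity is precisely $(iii)\Rightarrow(i)$. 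The main obstacle I foresee is the continuity claim in $(iii)\Rightarrow(i)$: one must transfer continuity from a dense polynomial core of the symbol algebra to its entire norm closure in a manner uniform in $X\in{\bf D}_f^m(\cH)$, and this is exactly what the functional calculus of Theorem \ref{funct-calc} is designed to enable.
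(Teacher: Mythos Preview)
Your proof is correct and follows essentially the same route as the paper: the cycle $(i)\Rightarrow(ii)\Rightarrow(iii)\Rightarrow(i)$, with Theorem~\ref{bounded} handling $(ii)\Rightarrow(iii)$ and Theorem~\ref{funct-calc} supplying the continuous extension in $(iii)\Rightarrow(i)$. The only cosmetic difference is in the three-epsilon argument for continuity of $\widetilde F$: the paper first proves $\psi=\lim_{r\to 1}F(rW)$ in norm and uses $F(r_0W)$ as the approximant, whereas you approximate $\psi$ directly by a polynomial $p\in\cP_\cE(W)^*+\cP_\cE(W)$; your variant is slightly more streamlined but the underlying mechanism (uniform contractivity of $\Phi_{f,X}$ on $\cS_\cE$) is identical.
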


\begin{proof} The implication (i)$\implies$(ii)  is clear. Assume that (ii) holds and note that the function $\varphi :[0,1]\to \overline{\boldsymbol{\cA}_\cE({\bf D}_f^m)^*+\boldsymbol{\cA}_\cE({\bf D}_f^m)}^{\|\cdot\|}$ given by
$\varphi(r):=F(rW)$ if $r\in [0,1)$ and $\varphi(1):=\lim_{r\to 1} F(rW)$ is continuous and bounded. Setting $\psi:=\varphi(1)$ and using Theorem \ref{bounded}, we deduce that
$F(X)=\widetilde{\bf B}^{(m)}_X(\psi)$ for $X\in {\bf D}_{f, rad}^m(\cH)$. Therefore, the implication (ii)$\implies$(iii) holds true. Now, we prove the implication
(iii)$\implies$(i). Assume that item (iii) holds. Thus there exists $\psi\in \overline{\boldsymbol{\cA}_\cE({\bf D}_f^m)^*+\boldsymbol{\cA}_\cE({\bf D}_f^m)}^{\|\cdot\|}$ such that
$F(X)=\widetilde{\bf B}^{(m)}_X(\psi)$ for $X\in {\bf D}_{f, rad}^m(\cH)$.
According to Theorem \ref{bounded}, $F$ is a bounded free pluriharmonic function on
${\bf D}_{f, rad}^m(\cH)$, $\|F\|=\|\psi\|$, and $\psi=\text{\rm SOT-}\lim_{r\to 1}F(rW)$.
In what follows, we show that $\psi= \lim_{r\to 1}F(rW)$ in the operator norm topology.
Indeed, let $\sum_{k=1}^\infty \sum_{|\alpha|=k} B_{(\alpha)}\otimes
W_\alpha^*  + \sum_{k=0}^\infty
 \sum_{|\alpha|=k} A_{(\alpha)}\otimes  W_\alpha$ be the Fourier representation of $\psi$ and note that
 \begin{equation*} F(X)=\sum_{k=1}^\infty \sum_{|\alpha|=k} B_{(\alpha)}\otimes
X_\alpha^*  + \sum_{k=0}^\infty
 \sum_{|\alpha|=k} A_{(\alpha)}\otimes  X_\alpha,\qquad X\in {\bf D}_{f, rad}^m(\cH),
\end{equation*}
where the series are convergent in the  operator norm topology.  Then for any $r\in [0,1)$, $F(rW)\in \boldsymbol{\cA}_\cE({\bf D}_f^m)^*+\boldsymbol{\cA}_\cE({\bf D}_f^m)$
and $F(rW)=\widetilde{\bf B}^{(m)}_{rW}(\psi)$. Since $\psi\in \overline{\boldsymbol{\cA}_\cE({\bf D}_f^m)^*+\boldsymbol{\cA}_\cE({\bf D}_f^m)}^{\|\cdot\|}$,
Theorem \ref{funct-calc} implies that $\psi=\lim_{r\to 1} \widetilde{\bf B}^{(m)}_{rW}(\psi)$ in the operator norm topology. Consequently, $\lim_{r\to 1} F(rW)=\psi$ in the operator norm topology, which proves our assertion.

Let $Y\in {\bf D}_f^m(\cH)$ and define $\widetilde F(Y):=\widetilde{\bf B}^{(m)}_{rY}(\psi)$ We remark that,  due to Theorem \ref{funct-calc},  the latter  limit exists in  the operator norm topology. It remains to prove that $\widetilde F|_{{\bf D}_{f, rad}^m(\cH)}=F$ an $\widetilde F$ is continuous on ${\bf D}_{f }^m(\cH)$.
Indeed, if $X\in {\bf D}_{f, rad}^m(\cH)$, then $X$ is a pure $n$-tuple and Theorem
\ref{funct-calc} implies that $\lim_{r\to 1} \widetilde{\bf B}^{(m)}_{rX}(\psi)= {\bf B}^{(m)}_{X}(\psi)$. Consequently, $\widetilde F(X)=F(X)$ for any $X\in  {\bf D}_{f, rad}^m(\cH)$.
Now, we prove the continuity of $\widetilde F$ on ${\bf D}_f^m(\cH)$.
Since $\psi=\lim_{r\to 1}F(rW)$ in the operator norm topology, for any $\epsilon>0$ there exists $r_0\in (0,1)$ such that $\|\psi-F(r_0W)\|<\frac{\epsilon}{3}$.
Since $\psi-F(r_0W)\in \overline{\boldsymbol{\cA}_\cE({\bf D}_f^m)^*+\boldsymbol{\cA}_\cE({\bf D}_f^m)}^{\|\cdot\|}$, we can use again Theorem \ref{funct-calc} and deduce that
$$
\lim_{r\to 1} \widetilde{\bf B}^{(m)}_{rY}[\psi-F(r_0W)]=\widetilde F(Y)-F(r_0Y)
$$
and
$$
\|\widetilde F(Y)-F(r_0Y)\|\leq \|\psi-F(r_0W)\|<\frac{\epsilon}{3}, \qquad Y\in {\bf D}_f^m(\cH).
$$
On the other hand, since $F$ is continuous on ${\bf D}_{f, rad}^m(\cH)$, there is $\delta>0$ such that $\|F(r_0Y)-F(r_0 Z)\|<\frac{\epsilon}{3}$
for any $Z\in {\bf D}_f^m(\cH)$ such that $\|Y-Z\|<\delta$.
Using the estimations above, we note that
\begin{equation*}
\begin{split}
\|\widetilde F(Y)-\widetilde F(Z)\|\leq \|\widetilde F(Y)-F(r_0Y)\|+\|F(r_0Y)-F(r_0Z)\|
+\|F(r_0Z)-\widetilde F(Z)\|<\epsilon
\end{split}
\end{equation*}
for any $Y,Z\in {\bf D}_f^m(\cH)$ such that $\|Y-Z\|<\delta$.
The last part of the theorem follows from Theorem \ref{bounded}.
The proof is complete.
\end{proof}

\begin{corollary}  Given a function $F:{\bf D}_{f, rad}^m(\cH)\to B(\cE)\otimes_{min}B(\cH)$, the following statements are equivalent:
\begin{enumerate}
\item[(i)] $F$ is a  free plurihamonic function which has continuous extension to ${\bf D}_f^m(\cH)$.

\item[(ii)] There is a continuous function $\varphi:[0,1]\to \overline{\boldsymbol{\cA}_\cE({\bf D}_f^m)^*+\boldsymbol{\cA}_\cE({\bf D}_f^m)}^{\|\cdot\|}$  in the operator norm topology  which satisfies the relation
$$
\varphi(r)=\widetilde{\bf B}^{(m)}_{\frac{r}{t}W}[\varphi(t)],\qquad \text{for any }\  0\leq r<t<1,
$$
and $F(X):=\widetilde{\bf B}^{(m)}_{\frac{1}{r}X}[\varphi(r)]$ for any  $X\in r{\bf D}_f^m(\cH)$ and $ r\in (0,1)$.
\end{enumerate}
Moreover, $F$ and $\varphi$ uniquely determine each other and $F(rW)=\varphi(r)$ for any $r\in [0,1)$.
\end{corollary}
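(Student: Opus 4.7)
The plan is to derive this corollary by bundling Theorem \ref{Dirichlet}, Theorem \ref{mean-value}, and Corollary \ref{mean-bounded}; the corollary is essentially the ``continuous-extension'' upgrade of Corollary \ref{mean-bounded} in the same way that Theorem \ref{Dirichlet} upgrades Theorem \ref{bounded}.

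For the implication (i)$\Rightarrow$(ii), I would first invoke Theorem \ref{Dirichlet}: the continuous extension of $F$ produces a unique $\psi\in\overline{\boldsymbol{\cA}_\cE({\bf D}_f^m)^*+\boldsymbol{\cA}_\cE({\bf D}_f^m)}^{\|\cdot\|}$ such that $\psi=\lim_{r\to 1}F(rW)$ in operator norm and $F(X)=\widetilde{\bf B}^{(m)}_X[\psi]$ on ${\bf D}_{f,rad}^m(\cH)$. I would then define $\varphi(r):=F(rW)$ for $r\in[0,1)$ and $\varphi(1):=\psi$. Membership of $\varphi(r)$ in the norm-closed operator system is guaranteed by Theorem \ref{Toeplitz}(a); the mean value identity $\varphi(r)=\widetilde{\bf B}^{(m)}_{\frac{r}{t}W}[\varphi(t)]$ is exactly Theorem \ref{mean-value} applied to $F$; and the formula $F(X)=\widetilde{\bf B}^{(m)}_{\frac{1}{r}X}[\varphi(r)]$ for $X\in r{\bf D}_f^m(\cH)$ is Corollary \ref{mean-bounded}(ii). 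Continuity of $\varphi$ on $[0,1)$ follows from the operator-norm convergence (uniform on compact subintervals) of the pluriharmonic series defining $F(rW)$, and continuity at $r=1$ is exactly the content of $\psi=\lim_{r\to 1}F(rW)$ in norm.

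For (ii)$\Rightarrow$(i), given a continuous $\varphi$ satisfying the mean value identity on $0\le r<t<1$, I would apply Theorem \ref{mean-value} to the restriction $\varphi|_{[0,1)}$ to produce a free pluriharmonic $F:{\bf D}_{f,rad}^m(\cH)\to B(\cE)\otimes_{min}B(\cH)$ with $F(rW)=\varphi(r)$ for every $r\in[0,1)$, and with $F(X)=\widetilde{\bf B}^{(m)}_{\frac{1}{r}X}[\varphi(r)]$ on $r{\bf D}_f^m(\cH)$. Continuity of $\varphi$ at $1$ then gives $\lim_{r\to 1}F(rW)=\varphi(1)$ in the operator norm, so statement (ii) of Theorem \ref{Dirichlet} is satisfied, which by that theorem is equivalent to (i) there, i.e.\ $F$ admits a continuous extension to ${\bf D}_f^m(\cH)$.

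The uniqueness of the $F\leftrightarrow\varphi$ correspondence and the identity $F(rW)=\varphi(r)$ on $[0,1)$ come for free from the two constructions: one direction is built by $\varphi(r):=F(rW)$, the other by $F(X):=\widetilde{\bf B}^{(m)}_{\frac{1}{r}X}[\varphi(r)]$, and each is the inverse of the other. I do not expect a real obstacle; the only point requiring a moment's care is checking that the $\varphi$ produced from a $ C$-extendable $F$ takes values in the norm closure rather than merely the SOT closure of $\boldsymbol{\cA}_\cE({\bf D}_f^m)^*+\boldsymbol{\cA}_\cE({\bf D}_f^m)$, and this is precisely the upgrade supplied by Theorem \ref{Dirichlet}.
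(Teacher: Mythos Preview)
Your proposal is correct and matches the paper's own approach: the paper simply says the proof is similar to that of Corollary \ref{mean-bounded} but uses Theorem \ref{Dirichlet}, and leaves the details to the reader. Your write-up is a faithful (and more detailed) execution of exactly that strategy.
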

\begin{proof} The proof is similar to that of Corollary \ref{mean-bounded}, but uses Theorem \ref{Dirichlet}. We leave it to the reader.
\end{proof}

\bigskip

\section{Cauchy transforms and  functional
calculus for noncommuting  operators} \label{Cauchy}

In this section, we use noncommutative Cauchy transforms associated with the domain ${\bf D}_f^m(\cH)$, to  provide a free analytic functional calculus for $n$-tuples of operators $X=(X_1,\ldots, X_n)\in B(\cH)^n$ with the spectral radius of the reconstruction operator  strictly less than 1. This extends to free pluriharmonic functions and has several consequences.

Let $f=\sum_{\alpha\in \FF_n^+} a_\alpha Z_\alpha$, $\alpha\in \CC$,
be a positive regular free holomorphic. For any
$n$-tuple of operators $X:=(X_1,\ldots, X_n)\in B(\cH)^n$
 such  that
 $ \sum\limits_{|\alpha|\geq 1}
a_{\alpha} X_{\alpha}X_{\alpha}^*$ is SOT-convergent, we define the
joint spectral radius of $X$  with respect to the noncommutative
domain ${\bf D}_f^{m}$ to be
$$
r_f(X):=\lim_{k\to\infty}\|\Phi_{f,X}^k(I)\|^{1/2k},
$$
where the positive linear map $\Phi_{f,X}:B(\cH)\to B(\cH)$ is given
by
$$
\Phi_{f,X}(Y):=\sum_{|\alpha|\geq 1} a_\alpha X_\alpha
YX_\alpha^*,\quad Y\in B(\cK),
$$
and  the convergence is in the week operator topology.
In the particular case when $f:=Z_1+\cdots +Z_n$, we obtain the
usual definition of the joint operator radius for $n$-tuples of
operators.

Since $\sum\limits_{|\alpha|\geq 1} a_{\tilde \alpha} \Lambda_\alpha
\Lambda_\alpha^*$ is SOT convergent, one can easily see that the  series
$\sum\limits_{|\alpha|\geq 1} a_{\tilde\alpha} \Lambda_\alpha
\otimes X_{\tilde\alpha}^*$ is SOT-convergent  in $B(F^2(H_n)\otimes
\cH)$.
We call the operator
$$ R_{\tilde f,X}:=\sum_{|\alpha|\geq 1} a_{\tilde\alpha} \Lambda_\alpha \otimes
X_{\tilde\alpha}^*$$
 the {\it  reconstruction operator} associated with the $n$-tuple
  $X:=(X_1,\ldots, X_n)$ and the noncommutative domain ${\bf D}_f^{m}$.
 Note  that
\begin{equation*}
 \left\|R_{\tilde f,X}^k\right\|\leq
\left\|\Phi^k_{\tilde f,\Lambda}(I)\right\|^{1/2}
\left\|\Phi_{f,X}^k(I)\right\|^{1/2},\quad k\in \NN,
\end{equation*}
where $\tilde f:=\sum\limits_{|\alpha|\geq 1} a_{\tilde
\alpha}Z_\alpha$ and $\Phi_{\tilde
f,\Lambda}(Y):=\sum\limits_{|\alpha|\geq 1}
a_{\tilde\alpha}\Lambda_\alpha Y \Lambda_\alpha^*$. Consequently, we deduce that
that
\begin{equation*}
r\left(R_{\tilde f,X}\right)\leq r_{\tilde f}(\Lambda)r_f(X),
\end{equation*}
where $r(A)$ denotes the usual spectral radius of an operator $A$.
Since
$\left\|\Phi_{\tilde f,\Lambda}(I)\right\|\leq 1$ (see relation \eqref{tild-Lamb}), we deduce that
$r_{\tilde f}(\Lambda)\leq 1$. This implies
\begin{equation*}
\label{r<r} r(R_{\tilde f,X})\leq r_f(X).
\end{equation*}

Assume now that $X:=(X_1,\ldots, X_n)\in B(\cH)^n$ is an $n$-tuple
of operators with  $r(R_{\tilde f,X})<1$. Note that the latter condition holds if $r_f(X)<1$.
 We introduce the {\it Cauchy kernel} associated  with   $X$  to be
  the operator
\begin{equation*}
  C_{f,X}^{(m)}:=\left(
I-R_{\tilde f,X} \right)^{-m},
\end{equation*}
which is well-defined
and
\begin{equation*}
 C_{f,X}^{(m)}=\left(\sum_{k=0}^\infty R_{\tilde f,X}^k\right)^m,
\end{equation*}
where the convergence is in the operator norm topology.

We remark that   $C_{f,X}^{(m)}\in R^\infty({\bf D}_f^m)\bar\otimes B(\cH)$, the $WOT$-closed operator
    algebra generated by the spatial tensor product.
    Moreover,
    its  Fourier representation
    is
    \begin{equation}
    \label{Fourier-Cauc}
    C_{f,X}^{(m)}=\sum_{\beta\in \FF_n^+} \Lambda_\beta\otimes b^{(m)}_{\tilde
    \beta}X_{\tilde \beta}^*,
    \end{equation}
where the coefficients $b_\alpha^{(m)}$, $\alpha\in \FF_n^+$ are given by
relation   \eqref{b-al}. In the particular case when
    $f$ is a polynomial,   the Cauchy kernel is
    in $\cR({\bf D}_f^m)\bar\otimes_{min} B(\cH)$.

 Given an $n$-tuple of operators  $X:=(X_1,\ldots, X_n)\in B(\cH)^n$
  with  $r(R_{\tilde f,X})<1$,
  we define the {\it Cauchy transform} at
  $X$ to be the mapping
$$
\cC^{(m)}_{f,X}:B(F^2(H_n))\to B(\cH)
$$
defined by
$$
\left< \cC_{f,X}^{(m)}(A)x,y\right>:= \left<(A\otimes I_\cH)(1\otimes x),
C_{f,X}^{(m)}(1\otimes y)\right>, \qquad x,y\in \cH.
$$
 The operator $\cC_{f,X}^{(m)}(A)$ is called the Cauchy
transform  of $A$ at $X$.

In what follows,  we provide a {\it free analytic functional calculus} for
$n$-tuples of operators $X\in B(\cH)^n$
  with  $r(R_{\tilde f,X})<1$.

\begin{theorem}\label{f-cal} Let $p\in \CC\left<Z_1,\ldots, Z_n\right>$ be a positive regular
noncommutative polynomial  and  let $X:=(X_1,\ldots, X_n)\in B(\cH)^n$
  be an $n$-tuple of operators with  with  $r(R_{\tilde p,X})<1$.
  If \ $$G:=\sum_{|\alpha|\geq 1}d_\alpha Z_\alpha^*+\sum\limits_{\alpha\in \FF_n^+}
c_\alpha Z_\alpha$$
 is  a free pluriharmonic  function on the
noncommutative  domain ${\bf D}_p^m(\cH)$, then
$$
G(X):=\sum\limits_{s=1}^\infty
\sum\limits_{|\alpha|=s} d_\alpha X_\alpha^*+\sum\limits_{s=0}^\infty
\sum\limits_{|\alpha|=s} c_\alpha X_\alpha
$$ is convergent in the operator norm  of $B(\cH)$
and the map
 $$\Psi_{p,X}: (Har({\bf D}_{p, rad}^m), \rho) \to (B(\cH), \|\cdot \|)\quad \text{ defined by } \quad
\Psi_{p,X}(G):=G(X)
$$
is a continuous. In particular, $\Psi_{p,X}|_{Hol({\bf D}_{p, rad}^m)}$  is a continuous unital algebra homomorphism. Moreover, the free
analytic functional calculus on $Hol({\bf D}_{p,rad}^m)$ is uniquely determined by the map
$$
Z_i\mapsto X_i,\qquad i\in \{1,\ldots,n\}.
$$
\end{theorem}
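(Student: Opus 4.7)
The plan is to use the noncommutative Cauchy transform $\cC^{(m)}_{p,X}$ developed earlier in this section to construct $G(X)$ and then derive the stated continuity and homomorphism properties. First I would reduce to the free holomorphic case: writing $G=H^*+F$ with $F,H\in Hol({\bf D}_{p,\text{rad}}^m)$ by setting $F:=\sum_{\alpha\in\FF_n^+}c_\alpha Z_\alpha$ and $H:=\sum_{|\alpha|\ge 1}\overline{d_\alpha}Z_\alpha$, so that formally $G(X)=H(X)^*+F(X)$. It then suffices to establish operator-norm convergence of $F(X)=\sum c_\alpha X_\alpha$ for any free holomorphic $F$.

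For the degree-$k$ piece $F_k(W):=\sum_{|\alpha|=k}c_\alpha W_\alpha$, a direct inner-product computation using the Fourier expansion \eqref{Fourier-Cauc} of $C^{(m)}_{p,X}$ yields $\cC^{(m)}_{p,X}(F_k(W))=\sum_{|\alpha|=k}c_\alpha X_\alpha=:F_k(X)$ along with the Cauchy--Schwarz estimate
\[
\|F_k(X)\|\le \|F_k(W)(1)\|_{F^2(H_n)}\cdot\Bigl\|\sum_{|\gamma|=k}b_\gamma^{(m)}X_\gamma X_\gamma^*\Bigr\|^{1/2}.
\]
The first factor is controlled via $\|F_k(W)(1)\|^2=\sum_{|\alpha|=k}|c_\alpha|^2/b_\alpha^{(m)}\le \sum_{|\alpha|=k}\omega_\alpha|c_\alpha|^2$ (since $\omega_\alpha\ge 1/b_\alpha^{(m)}$), and Proposition \ref{cha-hol} gives $\limsup_k\|F_k(W)(1)\|^{1/k}\le 1$. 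For the second factor, since $r(R_{\tilde p,X})<1$ one picks $\rho\in(r(R_{\tilde p,X}),1)$ with $\|R_{\tilde p,X}^j\|\le M\rho^j$; grouping the expansion $C^{(m)}_{p,X}=\sum_{j\ge 0}\binom{j+m-1}{m-1}R_{\tilde p,X}^j$ by degree (with $D:=\deg p$, only $j\in[k/D,k]$ contributes to degree $k$ since the linear terms of $p$ are nonzero) yields $\|\sum_{|\gamma|=k}b_\gamma^{(m)}X_\gamma X_\gamma^*\|\le C\, k^{m-1}\rho^{k/D}$. Combining, $\limsup_k\|F_k(X)\|^{1/k}\le \rho^{1/(2D)}<1$, giving absolute convergence of $\sum_k F_k(X)$ in operator norm.

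For continuity of $\Psi_{p,X}$: if $G_j\to G$ in the metric $\rho$ then by definition $\|G_j(tW)-G(tW)\|\to 0$ for each $t\in[0,1)$, hence $\|G_j(tX)-G(tX)\|\to 0$ after applying the Cauchy transform. The uniform geometric tail estimate above, applied to $G_j-G$, allows passage $t\to 1$ to obtain $\|G_j(X)-G(X)\|\to 0$. Restricted to $Hol({\bf D}_{p,\text{rad}}^m)$, multiplicativity is immediate on polynomials in $Z_1,\ldots,Z_n$, and extends to all of $Hol$ by approximating $F_1,F_2$ by their partial sums (polynomials), which converge in the $\rho$-metric, and using the established continuity; uniqueness of the calculus sending $Z_i\mapsto X_i$ follows from density of polynomials in $(Hol({\bf D}_{p,\text{rad}}^m),\rho)$.

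The main obstacle is the geometric-decay estimate for $\|\sum_{|\gamma|=k}b_\gamma^{(m)}X_\gamma X_\gamma^*\|^{1/k}$. The crucial point is the degree-counting observation: in the expansion of $C^{(m)}_{p,X}$ the power $R_{\tilde p,X}^j$ contributes to degree $k$ only for $j$ comparable to $k$ (specifically $j\in[k/D,k]$), which converts the spectral bound $\|R_{\tilde p,X}^j\|\le M\rho^j$ into degree-wise geometric decay. Combined with the sub-exponential growth of the Fourier coefficients of $F$ coming from Proposition \ref{cha-hol}, this yields absolute operator-norm convergence of the series defining $F(X)$, and hence of $G(X)$.
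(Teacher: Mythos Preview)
Your approach is essentially correct but takes a genuinely different route from the paper. The paper avoids degree-by-degree estimates entirely: from the Cauchy identity it gets $\|q(X)\|\le\|q(W)\|\,\|C^{(m)}_{p,X}\|$ for polynomials, then uses upper semicontinuity of the spectral radius to find $t>1$ with $r(R_{\tilde p,tX})<1$, and by the rescaling $X\mapsto tX$, $r=1/t$, obtains in one stroke the bound $\|F(X)\|\le\|F(\tfrac1t W)\|\,\|C^{(m)}_{p,tX}\|$ valid for every $F\in Hol({\bf D}^m_{p,\text{rad}})$; convergence, continuity, and multiplicativity all follow directly from this single inequality. Your route instead extracts a degree-wise geometric decay $\bigl\|\sum_{|\gamma|=k}b^{(m)}_\gamma X_\gamma X_\gamma^*\bigr\|\le C\,k^{2m}\rho^{2k/D}$ from the power-series expansion of $C^{(m)}_{p,X}$ (the exponent on $k$ should be $2m$ rather than $m-1$: the sum over $j\in[k/D,k]$ contributes an extra factor of $k$ before squaring, but this is harmless). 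This is more hands-on and yields an explicit rate of convergence that the paper's argument does not. One imprecision: your continuity argument via ``passage $t\to 1$'' is not justified as written, since you have no uniform-in-$j$ control on the tails of $G_j-G$. What actually follows cleanly from your degree-wise bound, after inserting $\|F_k(W)(1)\|\le r^{-k}\|F(rW)(1)\|=r^{-k}\|G(rW)(1)\|$ and summing, is an estimate of the form $\|F(X)\|\le C_r\,\|G(rW)\|$ for any fixed $r\in(\rho^{1/(2D)},1)$; continuity of $\Psi_{p,X}$ is then immediate from $\rho$-convergence at that single radius, with no limiting procedure needed.
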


\begin{proof}  Note that,
 using  relations \eqref{Fourier-Cauc}, \eqref{WbWb}, \eqref{WbWb-r},
 we obtain
\begin{equation*}
\begin{split}
\left<\cC_{p,X}^{(m)}(W_\alpha)x,y\right>&= \left< (W_\alpha\otimes
I_\cH)(1\otimes x), C_{p,X}^{(m)}
(1\otimes y)\right>\\
 &=\left< \frac{1}{\sqrt b_\alpha^{(m)}}e_\alpha\otimes x, \sum_{\beta\in
\FF_n^+}
b_{\tilde \beta}^{(m)}(\Lambda_\beta\otimes X_{\tilde \beta}^*)(1\otimes y)\right>\\
&= \left< \frac{1}{\sqrt b_\alpha^{(m)}}e_\alpha\otimes x, \sum_{\beta\in
\FF_n^+} \sqrt {b_{\tilde \beta}^{(m)}}e_{\tilde \beta}\otimes
X_{\tilde \beta}^*y\right>\\
&=\left<X_\alpha x,y\right>
\end{split}
\end{equation*}
for any $ x,y\in \cH$. Hence we deduce that, for any polynomial
$q\in \CC\left<Z_1,\ldots, Z_n\right>$,

\begin{equation*}
\left< q(X)x,y\right>=\left< (q(W)\otimes I_\cH)(1\otimes x), C_{p,X}^{(m)}(1\otimes y)\right>
\end{equation*}
and
\begin{equation}\label{ine-bound}
\|q(X)\|\leq \|q(W)\|\|C_{p,X}^{(m)}\|.
\end{equation}
 Since $F
:=\sum\limits_{\alpha\in \FF_n^+} c_\alpha   Z_\alpha $ is a free
holomorphic function on ${\bf D}_{p, rad}^m$,  the series $F(rW):=\sum\limits_{k=0}^\infty \sum\limits_{|\alpha|=k}c_\alpha
r^{|\alpha|} W_\alpha$, $r\in [0,1)$,  converges in the operator norm topology. Now,
using relation \eqref{ine-bound}, we deduce that $F(rX):=\sum\limits_{s=0}^\infty \sum\limits_{|\alpha|=s}c_\alpha
r^{|\alpha|} X_\alpha$ converges in the operator norm topology of
$B(\cH)$,
\begin{equation}
\label{gr-ine}
 \|F(rX)\|\leq \|F(rW)\|\|C_{p,X}^{(m)}\|,
\end{equation}
and
\begin{equation}\label{GGC}
 \left< F(rX)x,y\right>=\left< (F(rW)\otimes I_\cH)(1\otimes x),
C_{p,X}^{(m)}(1\otimes y)\right>
\end{equation}
for any $x,y\in \cH$ and $r\in [0,1)$.

In what follows, we prove that if $r(R_{\tilde p,X})<1$, then there is $t>1$ suxh that $r(R_{\tilde p,tX})<1$. Indeed, since the spectrum of an operator is upper continuous, so is the spectral radius. Consequently, for any $\delta>0$, there is $\epsilon>0$ such that if $\|X-tX\|<\epsilon$, then $r(R_{\tilde p,tX})<r(R_{\tilde p,X})+\delta$. Hence, using the fact that $r(R_{\tilde p,X})<1$, we deduce that there is $t>1$ such that $r(R_{\tilde p,tX})<1$.
Using relations \eqref{gr-ine} and \eqref{GGC} in the particular case when $r=\frac{1}{t}$ and when $X$ is replaced by $tX$, we deduce that
$F(X):=\sum\limits_{k=0}^\infty \sum\limits_{|\alpha|=k}c_\alpha
 X_\alpha$ is convergent in the operator norm topology and
 \begin{equation}
 \label{FFC}
 \left<F(X)x,y\right>=\left<(F\left(\frac{1}{t}W\right)\otimes I_\cH)(1\otimes x), C_{p,tX}^{(m)}(1\otimes y)\right>,\qquad x,y\in \cH.
 \end{equation}
Hence, we obtain
\begin{equation*}
\|F(X)\|\leq\left\|F\left(\frac{1}{t}W\right)\right\|\|C_{p,tX}^{(m)}\|.
\end{equation*}
Similar results hold true for the free holomorphic function $E
:=\sum\limits_{\alpha\in \FF_n^+} \overline{d}_\alpha   Z_\alpha$.
Combining the results, we deduce that
$$
G(X):=\sum\limits_{s=1}^\infty
\sum\limits_{|\alpha|=s} d_\alpha X_\alpha^*+\sum\limits_{s=1}^\infty
\sum\limits_{|\alpha|=s} c_\alpha X_\alpha
$$ is convergent in the operator norm  of $B(\cH)$
and
\begin{equation}
\label{norm-ine}
\|G(X)\|\leq\left( \left\|E\left(\frac{1}{t}W\right)\right\| + \left\|F\left(\frac{1}{t}W\right)\right\|\right)\|C_{p,tX}^{(m)}\|.
\end{equation}

 To
prove the continuity of $\Psi_{p,X}$, let $G_k$ and $G$ be in
$Har({\bf D}_{p, rad}^m)$ such that $G_k\to G$, as $m\to \infty$,  in the metric
$\rho$ of $Har({\bf D}_{p, rad}^m)$.  This is equivalent to the fact that, for
each $r\in [0,1)$,
\begin{equation*}
G_k(rW)\to G(rW),\quad \text{ as }\
k\to\infty,
\end{equation*}
where the convergence is  in the operator norm of $B(F^2(H_n))$.
Employing  relation \eqref{norm-ine}, when $G$ is replaced by $G_k-G$,  we deduce that
\begin{equation*}
\|G_k(X)-G(X)\|\to 0,
\quad \text{ as }\ k\to\infty.
\end{equation*}
which proves the continuity of $\Psi_{p,X}$.

Let $F_j:=\sum\limits_{s=0}^\infty \sum\limits_{|\alpha|=s}c_\alpha^{(j)}
 Z_\alpha$, $j\in \{1,2\}$, be free holomorphic functions on ${\bf D}_{f,rad}^m$.
Recall that $\cA({\bf D}_f^m)$ is the noncommutative domain algebra and $F_1(rW)F_2(rW)=(F_1F_2)(rW)$ for any $r\in [0,1)$.
Setting $p_{j,k}:=\sum\limits_{s=0}^k \sum\limits_{|\alpha|=s}c_\alpha^{(j)}
 Z_\alpha$, we have $p_{j,k}(X)\to F_j(X)$, as $k\to\infty$, in the operator norm for any $X\in {\bf D}_{p,rad}^m(\cH)$.
 Using relation \eqref{FFC}, we obtain
 $$
 \left<p_{1,k}(X)p_{2,k}(X)x,y\right>=\left<(p_{1,k}\left(\frac{1}{t}W\right)
 (p_{2,k}\left(\frac{1}{t}W\right)\otimes I_\cH)(1\otimes x), C_{p,tX}^{(m)}(1\otimes y)\right>,\qquad x,y\in \cH.
$$
Passing to the limit as  $k\to \infty$ and using again relation \eqref{FFC}, we obtain
\begin{equation*}
\begin{split}
\left<F_{1}(X)F_{2}(X)x,y\right>&=\left<(F_{1}\left(\frac{1}{t}W\right)
 (F_{2}\left(\frac{1}{t}W\right)\otimes I_\cH)(1\otimes x), C_{p,tX}^{(m)}(1\otimes y)\right>\\
 &=\left<(F_{1}F_2)\left(\frac{1}{t}W\right)
  \otimes I_\cH)(1\otimes x), C_{p,tX}^{(m)}(1\otimes y)\right>\\
  &=\left<(F_{1}F_2)(X)x,y\right>
\end{split}
\end{equation*}
for any $x,y\in \cH$. Consequently, $\Psi_{p,X}|_{Hol({\bf D}_{p, rad}^m)}$  is a unital algebra homomorphism.

To prove the uniqueness of the free analytic functional calculus,
assume that $\Phi:Hol({\bf D}_{p,rad}^m)\to B(\cH)$ is a continuous unital
algebra homomorphism such that $\Phi(Z_i)=T_i$, \ $i=1,\ldots, n$.
It is clear  that
\begin{equation}\label{pol2}
\Psi_{p,X}(q)=\Phi(q)
\end{equation}
for any polynomial $q\in \CC\left<Z_1,\ldots, Z_n\right>$.   Let
$F=\sum_{s=0}^\infty \sum_{|\alpha|=s} c_\alpha Z_\alpha$ be an
element in $Hol({\bf D}_{p,rad}^m)$ and let $Q_k:=\sum_{s=0}^k\sum_{|\alpha|=s}
c_\alpha Z_\alpha$, \ $k\in \NN$. Since
$$
F(rW)=\sum_{s=0}^\infty \sum_{|\alpha|=s} r^s
c_\alpha W_\alpha$$ and the series $\sum_{s=0}^\infty
r^s\left\|\sum_{|\alpha|=s} c_\alpha W_\alpha\right\|$ converges, we
deduce that
$
Q_k(rW)\to F(rW)
$
in the operator norm, as $k\to\infty$, which shows that $Q_k\to F$ in the
metric $\rho$ of $Hol({\bf D}_{p, rad}^m)$. Hence, using relation \eqref{pol2} and the
continuity of $\Phi$ and $\Psi_{p,T}$, we deduce that
$\Phi=\Psi_{p,T}$.
 This completes the proof.
 \end{proof}

\begin{corollary}\label{F-infty-cauc} Let $X:=(X_1,\ldots, X_n)\in B(\cH)^n$
  be an $n$-tuple of operators with  with  $r(R_{\tilde p,X})<1$ and  let $F\in
Hol({\bf D}_{p, rad}^m)$. If $t>1$ is such that $r(R_{\tilde p,tX})<1$, then
$$
F(X)=\cC_{p,tX}^{(m)}\left[F\left(\frac{1}{t}W\right)\right],
$$
where $F(X)$ is defined by the free analytic
functional calculus.
If, in addition, $F$ is bounded, then

$$
F(X)=\cC_{p,X}^{(m)}(\widetilde F),\qquad \text{ where  }\ \widetilde F=\text{\rm SOT-}\lim_{r\to 1}F(rW).
$$

\end{corollary}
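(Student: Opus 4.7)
The first identity is essentially a rewriting of relation \eqref{FFC} from the proof of Theorem~\ref{f-cal}. That relation states
\[
\left\langle F(X)x,y\right\rangle = \left\langle \bigl(F(\tfrac{1}{t}W)\otimes I_\cH\bigr)(1\otimes x),\, C_{p,tX}^{(m)}(1\otimes y)\right\rangle
\]
for all $x,y\in\cH$, and by the very definition of the Cauchy transform the right-hand side equals $\langle \cC_{p,tX}^{(m)}[F(\tfrac{1}{t}W)]\,x,y\rangle$; since $x,y$ are arbitrary, this gives $F(X)=\cC_{p,tX}^{(m)}[F(\tfrac{1}{t}W)]$.

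For the bounded case, the plan is to reduce to part~(i) by a rescaling trick and then pass to a limit. For each $s\in(0,1)$, I would apply part~(i) to the $n$-tuple $sX$ with parameter $1/s>1$: the hypothesis becomes $r(R_{\tilde p,(1/s)(sX)})=r(R_{\tilde p,X})<1$, which holds by assumption, and the resulting formula collapses to the cleaner identity
\[
F(sX)=\cC_{p,X}^{(m)}\!\left[F(sW)\right],\qquad s\in(0,1).
\]
It then remains to let $s\to 1^-$ on both sides. On the right, Theorem~\ref{bounded} gives $F(sW)\to\widetilde F$ in SOT with $\sup_{s\in[0,1)}\|F(sW)\|<\infty$; since $A\mapsto A\otimes I_\cH$ is SOT-continuous on norm-bounded sets, $(F(sW)\otimes I_\cH)(1\otimes x)\to(\widetilde F\otimes I_\cH)(1\otimes x)$ in norm, so pairing with the fixed vector $C_{p,X}^{(m)}(1\otimes y)$ yields $\cC_{p,X}^{(m)}[F(sW)]\to\cC_{p,X}^{(m)}[\widetilde F]$ in the weak operator topology.

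For the left-hand side, I would re-apply part~(i) with a \emph{fixed} $t_0>1$ for which $r(R_{\tilde p,t_0X})<1$ (such $t_0$ exists by the upper semicontinuity argument used in the proof of Theorem~\ref{f-cal}) and with parameter $t_0/s$, obtaining $F(sX)=\cC_{p,t_0X}^{(m)}[F(\tfrac{s}{t_0}W)]$ for $s$ close to $1$. Since $s\mapsto F(\tfrac{s}{t_0}W)$ is norm-continuous on a neighborhood of $s=1$ (the ratio $s/t_0$ stays strictly below $1$), norm continuity of $\cC_{p,t_0X}^{(m)}$ gives $F(sX)\to F(X)$ in operator norm, hence in WOT. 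Equating the two WOT limits yields the desired identity $F(X)=\cC_{p,X}^{(m)}[\widetilde F]$. The main technical point is precisely this continuity $F(sX)\to F(X)$: since $F$ need not extend continuously to the boundary of ${\bf D}_{p,\text{\rm rad}}^m(\cH)$, the rescaling trick is what transfers the limit from the $X$-side to the $W$-side, where it reduces to ordinary operator-norm continuity of a scalar power series inside its disk of convergence.
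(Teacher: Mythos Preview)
Your proof is correct and follows the same overall strategy as the paper: establish $F(sX)=\cC_{p,X}^{(m)}[F(sW)]$ for $s\in(0,1)$, then let $s\to 1$ using SOT-convergence on the right and norm convergence $F(sX)\to F(X)$ on the left. The paper obtains the first identity by directly recalling relation \eqref{GGC} from the proof of Theorem~\ref{f-cal} (which only needs $r(R_{\tilde p,X})<1$), and obtains the norm convergence from the linearity estimate $\|F(X)-F(\delta X)\|\le\|F(\tfrac{1}{t}W)-F(\tfrac{\delta}{t}W)\|\,\|C_{p,tX}^{(m)}\|$; you instead recover both via repeated applications of part~(i), which is a clean way to package the same content.

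One small point: when you apply part~(i) to the $n$-tuple $sX$, that statement also requires $r(R_{\tilde p,sX})<1$ (so that the functional calculus is defined at $sX$), not just $r(R_{\tilde p,(1/s)sX})<1$. You did not verify this, and for a general positive regular polynomial $p$ it is not automatic from $r(R_{\tilde p,X})<1$ alone for \emph{all} $s\in(0,1)$. However, you only need the identity for $s$ near $1$, and there it follows from the upper semicontinuity of the spectral radius exactly as in the proof of Theorem~\ref{f-cal}; alternatively, citing relation \eqref{GGC} directly avoids the issue entirely, since that relation was proved for all $r\in[0,1)$ under only the hypothesis $r(R_{\tilde p,X})<1$.
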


\begin{proof} The first part of the corollary is due to Theorem \ref{f-cal}  (see relation \eqref{FFC}).
Now, we assume that  $F$ is bounded and has the representation  $F:=\sum\limits_{k=0}^\infty\sum\limits_{|\alpha|=k}
c_\alpha Z_\alpha$.     Then,
 we have
$$
F(rW):=\lim_{k\to\infty}\sum_{s=0}^k r^s \sum_{|\alpha|=s} c_\alpha
W_\alpha,\qquad  0<r<1,
$$
in the operator norm of $B(F^2(H_n))$, and
$$
\lim_{k\to\infty}\sum_{s=0}^k r^s \sum_{|\alpha|=s} c_\alpha
X_\alpha=F(rX)
$$
in the operator norm of $B(\cH)$. Now,  due to the continuity of the
noncommutative Cauchy transform in the operator norm, we deduce that
\begin{equation*}
F(rX)=\cC_{p,X}(F(rW)), \qquad r\in [0,1).
\end{equation*}
Since $F$ is bounded, we know that
$\widetilde F:= \lim\limits_{r\to 1} F(rW)$ exists in
the strong operator topology. Since $\|F(rW)\|\leq
 \|\widetilde F\|$, $r\in [0,1)$,  we deduce that
$$\text{\rm SOT-}\lim\limits_{r\to 1}[F(rW)\otimes
I_\cH]=\widetilde F\otimes I_\cH.
$$
According to the proof of Theorem \ref{f-cal}, we have
\begin{equation*}
\|F(X)\|\leq\left\|F\left(\frac{1}{t}W\right)\right\|\|C_{p,tX}^{(m)}\|.
\end{equation*}
Using  this relation, we deduce that
\begin{equation*}
\begin{split}
 \|F(X)-F(\delta X)\|
\leq \left\|F\left(\frac{1}{t}W\right)-F\left(\frac{\delta}{t}W\right) \right\|\|C_{p,tX}^{(m)}\|, \qquad \delta\in (0,1).
\end{split}
\end{equation*}
Since $\left\|F\left(\frac{1}{t}W\right)-F\left(\frac{\delta}{t}W\right) \right\|\to 0$, as
$\delta\to 1$, we obtain   $\lim\limits_{\delta\to 1}
\|F(X)-F(\delta X)\|=0$.
On the other hand, since
\begin{equation*}
\begin{split}
 \left< F(rX)x,y\right>=\left< (F(rW)\otimes I_\cH)(1\otimes x),
C_{p,X}^{(m)}(1\otimes y)\right>
\end{split}
\end{equation*}
for any $x,y\in \cH$ and $r\in [0,1)$, we can pass  to the limit as $r\to 1$ and   obtain $F(X)=\cC_{p,X}^{(m)}[\widetilde F]$. This
  completes the proof.
   \end{proof}

\begin{corollary}\label{conv-u-w*}
Let $X:=(X_1,\ldots, X_n)\in B(\cH)^n$
  be an $n$-tuple of operators with  with  $r(R_{\tilde p,X})<1$.
\begin{enumerate}
\item[(i)] If $\{G_k\}_{k=1}^\infty$ and $G$ are free pluriharmonic
functions  in $Har({\bf D}_{p, rad}^m)$ such that $\|G_k-G\|_\infty\to 0$, as
$k\to \infty$, then $G_k(X)\to G(X)$ in
the operator norm of $B(\cH)$.
\item[(ii)] Let $\{G_k\}_{k=1}^\infty$ and $G$  be bounded free holomorphic functions on ${\bf D}_{p,rad}^m$ and let  $\{\widetilde G_k\}_{k=1}^\infty$ and $\widetilde G$ be the corresponding boundary operators   in  the noncommutative Hardy  algebra
$F^\infty ({\bf D}_p^m)$. If
$\widetilde G_k\to \widetilde G$ in the $w^*$-topology (or strong operator topology)  and
$\|G_k\|_\infty\leq M$ for any $k\in \NN$, then
$G_k(X)\to G(X)$ in the weak operator
topology.
\end{enumerate}
\end{corollary}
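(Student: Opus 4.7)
For part (i), my plan is to derive the conclusion directly from the $\rho$-continuity of $\Psi_{p,X}$ established in Theorem \ref{f-cal}. Because $rW \in {\bf D}_{p,rad}^m(F^2(H_n))$ for every $r \in [0,1)$, the definition of the supremum norm gives the trivial estimate
\[
d_r(G_k, G) = \|(G_k - G)(rW)\| \leq \|G_k - G\|_\infty,
\]
so $\|G_k - G\|_\infty \to 0$ forces $d_{r_N}(G_k, G) \to 0$ for every $N \in \NN$, whence $\rho(G_k, G) \to 0$. Since Theorem \ref{f-cal} provides a continuous map $\Psi_{p,X}: (Har({\bf D}_{p,rad}^m), \rho) \to (B(\cH), \|\cdot\|)$, we obtain $\|G_k(X) - G(X)\| \to 0$. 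Part (i) is thus essentially automatic.

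For part (ii), the plan is to push the hypothesis through the bounded Cauchy-transform representation. By Corollary \ref{F-infty-cauc}, $G_k(X) = \cC_{p,X}^{(m)}[\widetilde G_k]$ and $G(X) = \cC_{p,X}^{(m)}[\widetilde G]$; unwinding the definition of the Cauchy transform gives
\[
\langle G_k(X)x, y\rangle = \langle \widetilde G_k(1) \otimes x,\, v\rangle, \qquad v := C_{p,X}^{(m)}(1 \otimes y) \in F^2(H_n) \otimes \cH.
\]
The linear functional $\xi \in F^2(H_n) \mapsto \langle \xi \otimes x, v\rangle$ is bounded, so the Riesz representation theorem produces a fixed vector $\eta \in F^2(H_n)$ with $\langle \xi \otimes x, v\rangle = \langle \xi, \eta\rangle$ for every $\xi$, yielding
\[
\langle G_k(X)x, y\rangle = \langle \widetilde G_k(1), \eta\rangle,
\]
and analogously for $G$. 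The map $A \mapsto \langle A(1), \eta\rangle$ is a vector functional on $B(F^2(H_n))$, which is both $w^*$-continuous (it belongs to the predual) and continuous along SOT-convergent nets; hence $\widetilde G_k \to \widetilde G$ in either topology forces $\langle G_k(X)x, y\rangle \to \langle G(X)x, y\rangle$ for all $x, y \in \cH$, i.e.\ WOT-convergence of $G_k(X)$ to $G(X)$.

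The only substantive step is the identification of the pairing $A \mapsto \langle A(1) \otimes x, v\rangle$ as a single vector state on $B(F^2(H_n))$ via Riesz representation; once this is done, both the $w^*$ and the SOT case reduce to the standard continuity of a vector functional. The uniform bound $\|G_k\|_\infty \leq M$ plays only a supporting role, guaranteeing $\|\widetilde G_k\| \leq M$ and thus ensuring that Corollary \ref{F-infty-cauc} is applicable throughout.
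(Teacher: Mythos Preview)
Your proof is correct and follows precisely the approach the paper intends: the corollary is stated there without proof, as an immediate consequence of Theorem \ref{f-cal} (for part (i), via the $\rho$-continuity of $\Psi_{p,X}$) and Corollary \ref{F-infty-cauc} (for part (ii), via the Cauchy-transform representation $G_k(X)=\cC_{p,X}^{(m)}[\widetilde G_k]$). Your Riesz-representation step cleanly reduces the pairing $\langle(\widetilde G_k\otimes I_\cH)(1\otimes x),\,C_{p,X}^{(m)}(1\otimes y)\rangle$ to a single vector functional $A\mapsto\langle A(1),\eta\rangle$, which is indeed both $w^*$- and SOT-continuous, so nothing is missing.
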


Using Theorem \ref{f-cal} one can deduce   the following.

\begin{corollary}
For  any  $n$-tuple of operators  $(X_1,\ldots, X_n)\in
{\bf D}_{p,rad}^m(\cH)$, the free analytic functional calculus  coincides
with the $F^\infty({\bf D}_p^{m})$-functional calculus $($see \cite{Po-domains-models}$)$.
\end{corollary}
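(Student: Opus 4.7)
The plan is to invoke the uniqueness clause of Theorem~\ref{f-cal} after checking that both calculi are available on ${\bf D}_{p,rad}^m(\cH)$. Writing $X=tY$ with $t\in[0,1)$ and $Y\in {\bf D}_p^m(\cH)$, the inequality $t^{2|\alpha|}\leq t^2$ for $|\alpha|\geq 1$ combined with $\Phi_{p,Y}(I)\leq I$ yields $\Phi_{p,X}(I)\leq t^2 I$, so $\Phi_{p,X}^k(I)\leq t^{2k}I\to 0$ strongly. This shows both that $X$ is a pure element of ${\bf D}_p^m(\cH)$ and that $r_p(X)\leq t<1$; consequently, the inequality $r(R_{\tilde p,X})\leq r_{\tilde p}(\Lambda)r_p(X)\leq r_p(X)$ recorded in Section~5 forces $r(R_{\tilde p,X})<1$, bringing the situation into the scope of Theorem~\ref{f-cal}.

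Next, purity of $X$ together with the intertwining identity $K_{p,X}^{(m)}X_i^*=(W_i^*\otimes I)K_{p,X}^{(m)}$ imply that the $F^\infty({\bf D}_p^m)$-functional calculus of \cite{Po-domains-models} at $X$ is realized as the unital completely contractive algebra homomorphism
\[
\Phi_X^\infty(\varphi(W))=(K_{p,X}^{(m)})^*(\varphi(W)\otimes I_{\cH})K_{p,X}^{(m)},\qquad \varphi(W)\in F^\infty({\bf D}_p^m),
\]
which sends $W_i\mapsto X_i$ and satisfies $\Phi_X^\infty(q(W))=q(X)$ for every polynomial $q$.

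To compare the two calculi, let $F=\sum c_\alpha Z_\alpha\in Hol({\bf D}_{p,rad}^m)$ be bounded and set $\widetilde F=\text{SOT-}\lim_{r\to 1}F(rW)\in F^\infty({\bf D}_p^m)$. Corollary~\ref{F-infty-cauc} identifies $\Psi_{p,X}(F)=F(X)=\cC_{p,X}^{(m)}(\widetilde F)$. On the other hand, because $F(rW)\in \cA({\bf D}_p^m)$, the Berezin identity of Section~1 gives $(K_{p,X}^{(m)})^*(F(rW)\otimes I)K_{p,X}^{(m)}=F(rX)$, and SOT-continuity of the compression map on bounded sets together with norm-continuity of $F$ on ${\bf D}_{p,rad}^m(\cH)$ yield
\[
\Phi_X^\infty(\widetilde F)=\text{SOT-}\lim_{r\to 1}F(rX)=F(X).
\]
Equivalently, $F\mapsto \Phi_X^\infty(\widetilde F)$ is a continuous unital algebra homomorphism $Hol({\bf D}_{p,rad}^m)\to B(\cH)$ sending $Z_i\mapsto X_i$, and so must coincide with $\Psi_{p,X}$ by the uniqueness clause of Theorem~\ref{f-cal}.

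The main obstacle is the interplay between the topologies: the Cauchy-transform formula produces $F(X)$ in operator norm, whereas $\Phi_X^\infty(\widetilde F)$ is defined a priori through an SOT-limit, and these must be identified by first testing on the subalgebra $\cA({\bf D}_p^m)$, where the Berezin transform is exact, and then pulling the identification across the boundary passage $r\to 1$. Once this bookkeeping is in place, either the direct computation above or the uniqueness clause of Theorem~\ref{f-cal} yields the conclusion.
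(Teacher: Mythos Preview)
Your direct computation is correct and is essentially the right argument: verifying $r(R_{\tilde p,X})<1$ for $X\in {\bf D}_{p,rad}^m(\cH)$, observing that $X$ is pure so that the $F^\infty({\bf D}_p^m)$-calculus is realized by compression with the Berezin kernel, and then passing $(K_{p,X}^{(m)})^*(F(rW)\otimes I)K_{p,X}^{(m)}=F(rX)$ to the limit $r\to 1$ to obtain $\Phi_X^\infty(\widetilde F)=F(X)=\Psi_{p,X}(F)$. The paper gives no proof beyond the one-line remark ``Using Theorem~\ref{f-cal} one can deduce the following,'' so your write-up supplies the details the paper omits.

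One caveat: your closing sentence invoking the uniqueness clause of Theorem~\ref{f-cal} is not quite legitimate as stated. That uniqueness clause concerns continuous homomorphisms defined on \emph{all} of $Hol({\bf D}_{p,rad}^m)$ with respect to the metric $\rho$, whereas $F\mapsto \Phi_X^\infty(\widetilde F)$ is only defined on the bounded functions (those with a boundary value $\widetilde F\in F^\infty({\bf D}_p^m)$), and you have not checked $\rho$-continuity of this map. Since your direct computation already establishes the identity $\Phi_X^\infty(\widetilde F)=F(X)$ for every bounded $F$, the uniqueness appeal is redundant anyway; I would simply drop that last sentence.
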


Using  Corollary
\ref{F-infty-cauc}, we can obtain the following.

\begin{corollary}\label{an=cauch}
Let $X:=(X_1,\ldots, X_n)\in B(\cH)^n$
  be an $n$-tuple of operators with  with  $r(R_{\tilde p,X})<1$.
Then,  the map $\Psi_{p,X}:F^\infty({\bf D}_p^m)\to B(\cH)$ defined by
$$
\Psi_{p,X}(\widetilde G):=\cC_{p,X}^{(m)}[\widetilde G],
$$
for any $\widetilde G\in F_n^\infty({\bf D}_p^m)$,  is a unital  WOT
continuous  homomorphism such that $\Psi_{f,X}(W_\alpha)=X_\alpha$
for any $\alpha\in \FF_n^+$. Moreover,
$$
\|\Psi_{p,X}(\widetilde G)\|\leq \left(\sum\limits_{k=0}^\infty\left\|R_{\tilde f,X}^k\right\|\right)^m \|\widetilde G\|.
$$
\end{corollary}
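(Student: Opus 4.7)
The plan is to factor $\Psi_{p,X}$ through the free analytic functional calculus of Theorem \ref{f-cal}. Recall from \cite{Po-domains-models} that every $\widetilde G\in F^\infty({\bf D}_p^m)$ has the form $\widetilde G=\text{SOT-}\lim_{r\to 1}G(rW)$ for a unique bounded free holomorphic function $G\in Hol({\bf D}_{p,rad}^m)$, and the correspondence $\widetilde G\leftrightarrow G$ is a unital algebra isomorphism. By Corollary \ref{F-infty-cauc}, one has $\cC_{p,X}^{(m)}[\widetilde G]=G(X)$, where $G(X)$ is defined via the free analytic functional calculus. Hence $\Psi_{p,X}(\widetilde G)=G(X)$, and the unital homomorphism property together with the identity $\Psi_{p,X}(W_\alpha)=X_\alpha$ (since $W_\alpha$ corresponds to the monomial $Z_\alpha$) follow immediately from Theorem \ref{f-cal}.

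For the norm estimate, the definition of the Cauchy transform yields
\[
|\langle\cC_{p,X}^{(m)}(\widetilde G)x,y\rangle|=|\langle(\widetilde G\otimes I_\cH)(1\otimes x),C_{p,X}^{(m)}(1\otimes y)\rangle|\leq\|\widetilde G\|\,\|C_{p,X}^{(m)}\|\,\|x\|\,\|y\|,
\]
so $\|\Psi_{p,X}(\widetilde G)\|\leq\|\widetilde G\|\,\|C_{p,X}^{(m)}\|$. Because $r(R_{\tilde p,X})<1$, the Neumann series for $(I-R_{\tilde p,X})^{-1}$ converges in operator norm, giving $C_{p,X}^{(m)}=(\sum_{k\geq 0}R_{\tilde p,X}^k)^m$ and consequently $\|C_{p,X}^{(m)}\|\leq(\sum_{k\geq 0}\|R_{\tilde p,X}^k\|)^m$, which produces the announced bound.

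It remains to establish WOT continuity. Assume $\widetilde G_\lambda\to\widetilde G$ in WOT with $\sup_\lambda\|\widetilde G_\lambda\|\leq M<\infty$. Using \eqref{Fourier-Cauc} together with the action \eqref{WbWb-r} on the vacuum, one computes
\[
C_{p,X}^{(m)}(1\otimes y)=\sum_{\beta\in\FF_n^+}\sqrt{b_{\tilde\beta}^{(m)}}\,e_{\tilde\beta}\otimes X_{\tilde\beta}^*y,
\]
a genuine vector of $F^2(H_n)\otimes\cH$. Let $S_N$ denote its partial sum over $|\beta|\leq N$. For each fixed $N$, the pairing $\langle(\widetilde G_\lambda\otimes I_\cH)(1\otimes x),S_N\rangle$ is a finite $\CC$-linear combination of the scalars $\langle\widetilde G_\lambda 1,e_{\tilde\beta}\rangle\langle x,X_{\tilde\beta}^*y\rangle$, each converging to its counterpart with $\widetilde G$ by hypothesis; meanwhile the truncation error is bounded by $M\|x\|\cdot\|C_{p,X}^{(m)}(1\otimes y)-S_N\|$, which is uniform in $\lambda$ and tends to $0$ as $N\to\infty$. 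A standard $\varepsilon/3$ argument then gives $\langle\Psi_{p,X}(\widetilde G_\lambda)x,y\rangle\to\langle\Psi_{p,X}(\widetilde G)x,y\rangle$. The main delicate point is precisely this interchange of limits: WOT convergence of $\widetilde G_\lambda$ is stated against vectors in $F^2(H_n)$, not in $F^2(H_n)\otimes\cH$, and it is the explicit Fourier expansion of $C_{p,X}^{(m)}(1\otimes y)$ combined with the uniform bound on $\|\widetilde G_\lambda\|$ that reduces the problem to the scalar setting.
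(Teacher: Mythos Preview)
Your argument is correct and follows the paper's intended route (the paper merely says ``Using Corollary \ref{F-infty-cauc}, we can obtain the following''); the factorization through the free analytic functional calculus of Theorem \ref{f-cal} together with Corollary \ref{F-infty-cauc} is exactly what is needed, and your norm estimate is the right one.

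One remark on the WOT-continuity step: the boundedness hypothesis $\sup_\lambda\|\widetilde G_\lambda\|\leq M$ and the $\varepsilon/3$ truncation are not needed. Since $(\widetilde G_\lambda\otimes I_\cH)(1\otimes x)=(\widetilde G_\lambda 1)\otimes x$, your own expansion of $C_{p,X}^{(m)}(1\otimes y)=\sum_\beta \sqrt{b_{\tilde\beta}^{(m)}}\,e_{\tilde\beta}\otimes X_{\tilde\beta}^*y$ gives
\[
\langle\Psi_{p,X}(\widetilde G_\lambda)x,y\rangle=\big\langle \widetilde G_\lambda 1,\ \eta_{x,y}\big\rangle,\qquad \eta_{x,y}:=\sum_{\beta}\sqrt{b_{\tilde\beta}^{(m)}}\,\langle X_{\tilde\beta}^*y,x\rangle\, e_{\tilde\beta}.
\]
The vector $\eta_{x,y}$ lies in $F^2(H_n)$ because $\sum_\beta b_{\tilde\beta}^{(m)}|\langle X_{\tilde\beta}^*y,x\rangle|^2\leq \|x\|^2\|C_{p,X}^{(m)}(1\otimes y)\|^2<\infty$. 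Thus $A\mapsto \langle\Psi_{p,X}(A)x,y\rangle$ is a genuine WOT-continuous functional on $B(F^2(H_n))$, and full WOT continuity of $\Psi_{p,X}$ follows without any uniform bound on the net. The ``delicate interchange'' you flag dissolves precisely because the first tensor leg of the input vector is the vacuum.
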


\begin{definition}
Let $H_1$ and $H_2$ be two self-adjoint free pluriharmonic functions on ${\bf D}_{f, rad}^m$ with scalar coefficients. We say that $H_2$ is the pluriharmonic conjugate of $H_1$, if $H_1+iH_2$ is a free holomorphic function on  ${\bf D}_{f, rad}^m$.
\end{definition}

\begin{proposition} The free pluriharmonic conjugate of a self-adjoint free pluriharmonic function on ${\bf D}_{f, rad}^m$ is unique up to an additive real constant.
\end{proposition}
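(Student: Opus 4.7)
The plan is to reduce the uniqueness statement to the rigidity fact recorded in the remark following Definition~2.4 (Section~3), namely that a free holomorphic function whose real part vanishes must be a constant with purely imaginary coefficient.

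First, suppose $H$ is a self-adjoint free pluriharmonic function on ${\bf D}_{f, rad}^m$ with scalar coefficients, and that $H_2, H_2'$ are both pluriharmonic conjugates of $H$. By definition this means that
$$
F := H + i H_2 \quad \text{and} \quad F' := H + i H_2'
$$
are free holomorphic functions on ${\bf D}_{f, rad}^m$ with scalar coefficients. Since $Hol_\CC({\bf D}_{f,rad}^m)$ is a $\CC$-vector space, the difference
$$
F - F' = i(H_2 - H_2')
$$
is again free holomorphic.

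Next I would verify that $\Re(F - F') \equiv 0$. Setting $S := H_2 - H_2'$, the self-adjointness of $H_2$ and $H_2'$ gives $S^* = S$, so
$$
(iS) + (iS)^* = iS - iS = 0,
$$
which says $\Re(iS) = 0$ pointwise on the domain. Thus $F - F'$ is a free holomorphic function on ${\bf D}_{f,rad}^m$ with identically vanishing real part.

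The final step is to invoke the remark following Definition~2.4: every free holomorphic function with zero real part must be of the form $A_{(g_0)} \otimes I$ with $\Re A_{(g_0)} = 0$. Specialized to scalar coefficients ($\cE = \CC$), this forces $A_{(g_0)} = ic$ for some real number $c$. Consequently $i(H_2 - H_2') = ic\cdot I$, which gives $H_2 - H_2' = c\cdot I$, i.e.\ $H_2$ and $H_2'$ differ by an additive real constant. There is no substantive obstacle here, since the whole argument is a direct scalar-coefficient specialization of the uniqueness statement already established in Section~3 for the free holomorphic lift of a self-adjoint free pluriharmonic function.
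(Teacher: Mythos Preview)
Your proof is correct and follows essentially the same approach as the paper: both arguments take the difference of two free holomorphic lifts, observe that this difference has vanishing real part (via the self-adjointness of the conjugates), and then invoke the remark after Definition~\ref{def-pluri} to conclude that the difference is a purely imaginary constant. The paper's write-up additionally records the explicit formula $H=\frac{F-F^*}{2i}-i\lambda$ for any conjugate, but the underlying logic is the same as yours.
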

\begin{proof}
Assume that $G=\Re F$ with $F\in Hol({\bf D}_{f, rad}^m)$, and let $H$ be a self-adjoint  free pluriharmonic function such that $G+iH=\Lambda\in Hol({\bf D}_{f, rad}^m)$.
Then $H=\frac{2\Lambda-F-F^*}{2i}$ and the equality $H=H^*$ implies $\Re(\Lambda-F)=0$.
Consequently, due to the remarks following Definition \ref{def-pluri}, we have $\Lambda-F=\lambda$ with $\lambda$ is an imaginary complex number. Now, it is clear that
$H=\frac{F-F^*}{2i}-i\lambda$.
The proof is complete.
\end{proof}

\begin{theorem} Let $X:=(X_1,\ldots, X_n)\in B(\cH)^n$
  be an $n$-tuple of operators with  with  $r(R_{\tilde p,X})<1$ and let $F\in Hol({\bf D}_{p,rad}^m)$ be such that   $F(0)$ is real. If $G=\Re F$ and  $t>1$ is such that $r(R_{\tilde p,tX})<1$, then
  $$
  \left< F(X)x,y\right>=\left<(G\left(\frac{1}{t}W\right)\otimes I_\cH)(1\otimes x), \left[2C_{p,tX}^{(m)}-I\right](1\otimes y)\right>,\qquad x,y\in \cH.
  $$
 If, in addition, $F$ is bounded, then
$$
  \left< F(X)x,y\right>=\left<(\widetilde G\otimes I_\cH)(1\otimes x), \left[2C_{p,X}^{(m)}-I\right](1\otimes y)\right>,\qquad x,y\in \cH,
  $$
  where $\widetilde G:=\text{\rm SOT-}\lim_{r\to 1}G(rW)$.
\end{theorem}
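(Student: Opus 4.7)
The starting point is Corollary \ref{F-infty-cauc}, which already gives the formula $F(X)=\cC_{p,tX}^{(m)}[F(\tfrac{1}{t}W)]$, i.e.
\begin{equation*}
\langle F(X)x,y\rangle = \bigl\langle (F(\tfrac{1}{t}W)\otimes I_\cH)(1\otimes x),\; C_{p,tX}^{(m)}(1\otimes y)\bigr\rangle.
\end{equation*}
The idea is to replace $F(\tfrac{1}{t}W)$ by an expression involving $G(\tfrac{1}{t}W)$, using that $G=\Re F$ with $F(0)$ real forces $2G=F+F^*$ on the operator side, and then to exploit the fact that adjoints of analytic symbols annihilate the vacuum vector $1\in F^2(H_n)$.

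First, write $F=\sum_{\alpha}c_\alpha Z_\alpha$ with $c_{g_0}=F(0)\in\RR$. Then $2G(\tfrac1tW)=F(\tfrac1tW)+F(\tfrac1tW)^*$, so $F(\tfrac1tW)=2G(\tfrac1tW)-F(\tfrac1tW)^*$. Substituting and distributing the inner product, I reduce the first claim to the identity
\begin{equation*}
\bigl\langle (F(\tfrac{1}{t}W)^*\otimes I_\cH)(1\otimes x),\; C_{p,tX}^{(m)}(1\otimes y)\bigr\rangle
\;=\; \bigl\langle (G(\tfrac{1}{t}W)\otimes I_\cH)(1\otimes x),\; 1\otimes y\bigr\rangle,
\end{equation*}
and show both sides equal $c_{g_0}\langle x,y\rangle$.

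The left side: $F(\tfrac1tW)^*(1)=\sum_\alpha\bar c_\alpha t^{-|\alpha|}W_\alpha^*(1)=\bar c_{g_0}\cdot 1=c_{g_0}\cdot 1$ since $W_\alpha^*1=0$ whenever $|\alpha|\geq 1$. Hence the left side becomes $c_{g_0}\langle 1\otimes x, C_{p,tX}^{(m)}(1\otimes y)\rangle$. Now use the Fourier representation \eqref{Fourier-Cauc} together with $\Lambda_\beta(1)=(b_{\tilde\beta}^{(m)})^{-1/2}e_{\tilde\beta}$: only the term $\beta=g_0$ survives inner-product against $1\otimes x$, producing $\langle x,y\rangle$. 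Thus the left side equals $c_{g_0}\langle x,y\rangle$. The right side: a direct term-by-term computation using $W_\alpha 1\perp 1$ and $W_\alpha^* 1=0$ for $|\alpha|\geq 1$ gives $\langle G(\tfrac1tW)(1),1\rangle=c_{g_0}$, so the right side also equals $c_{g_0}\langle x,y\rangle$. Reassembling yields the first formula.

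For the bounded case, I invoke the second part of Corollary \ref{F-infty-cauc} giving $F(X)=\cC_{p,X}^{(m)}(\widetilde F)$ with $\widetilde F=\text{SOT-}\lim_{r\to 1}F(rW)$, and repeat the argument with $\widetilde F,\widetilde G,C_{p,X}^{(m)}$ in place of $F(\tfrac1tW),G(\tfrac1tW),C_{p,tX}^{(m)}$. The two vacuum computations pass to the limit trivially: $\langle F(rW)^*(1),\xi\rangle=c_{g_0}\langle 1,\xi\rangle$ and $\langle G(rW)(1),1\rangle=c_{g_0}$ are independent of $r$, hence survive taking the SOT limit as $r\to 1$. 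The main (mild) obstacle is the bookkeeping with Fourier coefficients and the action of $W_\alpha^*$, $W_\alpha$, $\Lambda_\beta$ on the vacuum; no deeper input beyond Corollary \ref{F-infty-cauc} is required.
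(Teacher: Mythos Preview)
Your proof is correct and follows essentially the same approach as the paper: both rely on the Cauchy transform identity from Corollary~\ref{F-infty-cauc} (equivalently, relation \eqref{FFC} in the proof of Theorem~\ref{f-cal}) together with the vacuum computations $W_\alpha^*(1)=0$ and $W_\alpha(1)\perp 1$ for $|\alpha|\geq 1$, and the observation that $\langle 1\otimes x, C_{p,tX}^{(m)}(1\otimes y)\rangle=\langle x,y\rangle$. The only difference is organizational: you substitute $F(\tfrac{1}{t}W)=2G(\tfrac{1}{t}W)-F(\tfrac{1}{t}W)^*$ into the Cauchy formula and then simplify, whereas the paper computes $\langle (F(\tfrac{1}{t}W)\otimes I)(1\otimes x),[2C_{p,tX}^{(m)}-I](1\otimes y)\rangle$ and $\langle (F(\tfrac{1}{t}W)^*\otimes I)(1\otimes x),[2C_{p,tX}^{(m)}-I](1\otimes y)\rangle$ separately and then adds; the underlying ingredients and logic are identical.
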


\begin{proof} Using the proof of Theorem \ref{f-cal}, we deduce that
\begin{equation*}
\begin{split}
&\left<(F\left(\frac{1}{t}W\right)\otimes I_\cH)(1\otimes x), \left[2C_{p,tX}^{(m)}-I\right](1\otimes y)\right>\\
&\qquad=2\left<(F\left(\frac{1}{t}W\right)\otimes I_\cH)(1\otimes x), C_{p,tX}^{(m)}(1\otimes y)\right>-\left<F\left(\frac{1}{t}W\right)\otimes I_\cH)(1\otimes x),1\otimes y\right>\\
&\qquad = 2\left<F(X)x,y\right>-F(0)\left<x,y\right>
\end{split}
\end{equation*}
and
\begin{equation*}
\begin{split}
&\left<(F\left(\frac{1}{t}W\right)^*\otimes I_\cH)(1\otimes x), \left[2C_{p,tX}^{(m)}-I\right](1\otimes y)\right>\\
&\qquad=
\left<(\overline{F(0)}\otimes I_\cH)(1\otimes x), \left[2C_{p,tX}^{(m)}-I\right](1\otimes y)\right>\\
&\qquad=\overline{F(0)}\left<x,y\right>.
 \end{split}
\end{equation*}
Taking into account that $F(0)\in \RR$ and adding the relations above, we obtain
$$
2\left<F(X) x,y\right>=\left<\left[\left(F\left(\frac{1}{t}W\right)^*+F\left(\frac{1}{t}W\right)\right)\otimes I_\cH\right](1\otimes x), \left[2C_{p,tX}^{(m)}-I\right](1\otimes y)\right>,
$$
which proves the first part of the theorem. In a similar manner, but using Corollary \ref{F-infty-cauc}, one can prove the second part of the theorem.
\end{proof}

We remark that the free pluriharmonic conjugate $H$  of $G$ an be expressed  in terms of $G$, due to the fact that $H=\frac{F-F^*}{2i}-i\lambda$, where $\lambda$ is an imaginary complex number.

      \bigskip

       %


\begin{thebibliography}{99}



\bibitem{Ag2} {\sc J.~Agler},
Hypercontractions and subnormality, {\it J. Operator Theory} {\bf
13} (1985), 203--217.














 %




















\bibitem{BS} {\sc A.~Bottcher and B.~Silbermann}, {\it Analysis of Toeplitz operators}, Springer-Verlag, Berlin, 1990.

\bibitem{BH}{\sc A.~Brown  and  P.R.~Halmos},  Algebraic properties of Toeplitz operators, {\it J. Reine Angew. Math.} {\bf 213} 1963/1964 89--102.

















\bibitem{DP2} {\sc K.~R.~Davidson and D.~Pitts},
The algebraic structure of non-commutative analytic Toeplitz algebras,
{\it  Math. Ann.}
   {\bf 311} (1998),  275--303.


\bibitem{DKP}  {\sc K.R.~Davidson, E.~Katsoulis, and D.~Pitts},
  The structure of free semigroup algebras,
 {\it J. Reine Angew. Math.}
  {\bf 533} (2001), 99--125.



\bibitem{DLP} {\sc K.~R.~Davidson, J.~Li, and D.R.~Pitts},
  Absolutely
continuous representations and a Kaplansky density theorem for free
semigroup algebras, {\it J. Funct. Anal.} {\bf 224} (2005), no. 1,
160--191.

\bibitem{Dou} {\sc   R. G.~Douglas}, {\em Banach algebra techniques in operator theory}, Second edition. Graduate Texts in Mathematics, {\bf 179}, Springer-Verlag, New York, 1998. xvi+194 pp.
















\bibitem{ER} {\sc E.G.~Effros and Z.J.~Ruan},
  {\em Operator spaces},
 London Mathematical Society Monographs. New Series, {\bf 23}.
 The Clarendon Press, Oxford University Press, New York, 2000.





\bibitem{EL} {\sc J.~Eschmeier and S.~Langend\" orfer}, Toeplitz operators with pluriharmonic symbol, preprint 2017.




\bibitem{HW} {\sc P.~Hartman and A.~Wintner}, The spectra of Toeplitz's matrices, {\it Amer. J. Math.}  {\bf 76} (1954), 867-882.

\bibitem{HKZ} {\sc H.~Hedenmalm, B.~ Korenblum, and K.~Zhu},  {\it Theory of Bergman spaces}, Graduate Texts in Mathematics, {\bf 199}, Springer-Verlag, New York, 2000. x+286 pp.

\bibitem{Her} {\sc G.~Herglotz},
\" Uber Potenzreien mit positiven, reelen Teil im Einheitkreis,
Berichte \" uber die Verhaundlungen der k\" oniglich s\" achsischen
Gesellschaft der Wissenschaften zu Leipzig, {\it Math.-Phys. Klasse}
{\bf 63} (1911), 501--511.



\bibitem{H} {\sc K.~Hoffman},  {\em Banach Spaces of Analytic Functions},
Englewood Cliffs: Prentice-Hall, 1962.







    \bibitem{Ken1} {\sc M.~ Kennedy}, Wandering vectors and the reflexivity of free semigroup algebras, {\it J. Reine Angew. Math.} {\bf 653} (2011), 47--73.

\bibitem{Ken2} {\sc M.~ Kennedy}, The structure of an isometric tuple, {\it  Proc. Lond. Math. Soc.} (3) {\bf 106} (2013), no. 5, 1157--1177.



     \bibitem{LO} {\sc I.~Louhichi and A.~Olofsson}, Characterizations of Bergman space Toeplitz operators with harmonic symbols, {\it J.Reine Angew. Math.} {\bf 617} (2008), 1--26.




\bibitem{O1} {\sc A.~Olofsson},
A characteristic operator function for the class of
$n$-hypercontractions, {\it  J. Funct. Anal.} {\bf 236} (2006), no.
2, 517--545.


\bibitem{O2} {\sc A.~Olofsson},  An operator-valued Berezin transform and
the class of $n$-hypercontractions,  {\it Integral Equations Operator Theory}  {\bf 58}  (2007),  no. 4, 503--549.





\bibitem{Pa-book} {\sc V.I.~Paulsen},
 {\it Completely Bounded Maps and Dilations},
Pitman Research Notes in Mathematics, Vol.146, New York, 1986.


\bibitem{Pi} {\sc G.~Pisier}, \emph{ Similarity Problems and Completely Bounded Maps},
Springer Lect. Notes Math., Vol.1618, Springer-Verlag, New York,
1995.







\bibitem{Po-multi} {\sc G.~Popescu},
 Multi-analytic operators and some  factorization theorems,
 {\it Indiana Univ. Math.~J.}
 {\bf 38} (1989),   693--710.










      \bibitem{Po-analytic} {\sc G.~Popescu},
      {Multi-analytic operators on Fock spaces,}
      {\it Math. Ann.} {\bf 303} (1995), 31--46.










      \bibitem{Po-poisson} {\sc G.~Popescu},
     {Poisson transforms on some $C^*$-algebras generated by isometries,}
       {\it J. Funct. Anal.} {\bf 161} (1999),  27--61.












      \bibitem{Po-holomorphic} {\sc G.~Popescu},
      {Free holomorphic functions on the unit ball of $B(\cH)^n$},
      {\it J. Funct. Anal.}  {\bf 241} (2006), 268--333.




\bibitem{Po-entropy} {\sc G.~Popescu}, Entropy and multivariable interpolation, {\it Mem. Amer. Math. Soc.}  {\bf 184} (2006), no. 868, vi+83 pp.





\bibitem{Po-pluriharmonic} {\sc G.~Popescu},
{Noncommutative transforms and free pluriharmonic functions},
 {\it Adv. Math.} {\bf 220} (2009), 831-893.

 \bibitem{Po-domains-models} {\sc G.~Popescu}, Noncommutative Berezin transforms and multivariable operator model theory, {\it  J. Funct. Anal.}  {\bf 254}  (2008),  no. 4, 1003--1057.



\bibitem{Po-domains} {\sc  G.~Popescu},
Operator theory on noncommutative domains, {\it Mem. Amer. Math. Soc.}  {\bf 205}  (2010),  no. 964, vi+124 pp.

\bibitem{Po-Berezin2} {\sc  G.~Popescu}, Berezin transforms on noncommutative varieties in polydomains, {\it  J. Funct. Anal.}  {\bf 265}  (2013),  no. 10, 2500--2552.



\bibitem{Po-Berezin1} {\sc  G.~Popescu}, Berezin transforms on noncommutative polydomains, {\it Trans. Amer. Math. Soc.}  {\bf 368}  (2016),  no. 6, 4357--4416.





     \bibitem{Po-Bohr2} {\sc G.~Popescu},   Bohr inequalities on noncommutative  polydomains, preprint.


\bibitem{Po-Toeplitz-Hyperball} {\sc G.~Popescu},  Brown-Halmos characterization of multi-Toeplitz operators associated with noncommutative hyperballs, preprint.



\bibitem{RR} {\sc M.~Rosenblum and J.~Rovnyak}, {\it Hardy classes and operator theory}, Oxford University Press-New York,  1985.




\bibitem{Sc}  {\sc I.~Schur},
%
 \"Uber Potenzreihen die im innern des Einheitshreises beschr\"ankt sind,
{\it  J. Reine Angew. Math.}
 {\bf 148} (1918), 122--145.







\bibitem{SzFBK-book} {\sc B.~Sz.-Nagy, C.~Foia\c{s}, H.~Bercovici, and L.~K\' erchy}, {\em Harmonic
Analysis of Operators on Hilbert Space}, Second edition. Revised and enlarged edition. Universitext. Springer, New York, 2010. xiv+474 pp.


\bibitem{T} {\sc O.~Toeplitz}, Zur Theorie der quadratischen und bilinearen Formen von unendlichvielen Ver\" anderlichen,  {\it Math. Ann.}  {\bf 70}  (1911),  no. 3, 351--376.

\bibitem{U} {\sc H.~Upmeier}, {\it Toeplitz operators and index theory in several complex variables}, Operator Theory: Advances and Applications  {\bf 81}, Birkhauser Verlag, Basel, 1996.
        %


       \end{thebibliography}
      \end{document}